\documentclass[11pt,twoside, a4paper, english, reqno]{amsart}
\usepackage[dvips]{epsfig}
\usepackage{amscd}
\usepackage{amssymb}
\usepackage{amsthm}
\usepackage{amsmath}
\usepackage{latexsym}

\usepackage{upref}
\usepackage{bm}
\usepackage{color}
\usepackage{hyperref}
\usepackage{graphicx}
\usepackage{lipsum}

\hypersetup{linkcolor=blue, colorlinks=true
,citecolor = red}
\setlength{\topmargin}{-.5cm}
\setlength{\textheight}{23cm}
\setlength{\evensidemargin}{0.0cm}
\setlength{\oddsidemargin}{01.2cm}

\setlength{\textwidth}{15.1cm}

\theoremstyle{plain}
\newtheorem{Th}{Theorem}[section]
\newtheorem{Lem}[Th]{Lemma}

\newtheorem{Prop}[Th]{Proposition}

 \theoremstyle{definition}
\newtheorem{Def}[Th]{Definition}

\newtheorem{Rem}[Th]{Remark}

\newtheorem{lema}{Lemma}

\newtheorem{Ass}{Assumption}

\newcommand{\be} {\begin{equation}}
\newcommand{\ee} {\end{equation}}

\newcommand{\R} {\mathbb{R}}

\newcommand{\rt}{\mathbb{R}^{2}}

\newcommand{\f}{\mathcal{F}}

\newcommand{\et}{\bm{\eta}}
\newcommand{\si}{\sum_{i=1}^n}
\newcommand{\sj}{\sum_{j=1}^n}
\newcommand{\inte}{\int_{\rt}}
\newcommand{\rk}{\bm{\rho}_{\tau}^k}
\newcommand{\rka}{\bm{\rho}_{\tau}^{k-1}}
\newcommand{\rki}{\rho_{\tau,i}^k}

\newcommand{\rkm}{\bm{\rho}^{k_m}_{\tau_m}}
\newcommand{\rkam}{\bm{\rho}^{k_m-1}_{\tau_m}}

\newcommand{\vr}{\bm{\varrho}}
\newcommand{\vri}{\varrho_i}
\newcommand{\vrj}{\varrho_j}

\newcommand{\rl}{\bm{\rho}_{\tau}^l}

\newcommand{\rti}{\rho_{\tau,i}}

\newcommand{\vu}{\Upsilon}
\newcommand{\fl}{\si\inte \rho_i(x) \ln \rho_i(x) \ dx + \si \sj \frac{a_{ij}}{4\pi}\inte \inte \rho_i(x) \ln |x-y| \rho_j(y) \ dx dy}

\newcommand{\G}{ \mathcal{G}_{\bm{\eta}}(\bm{\rho})}
\newcommand{\F}{\mathcal{F}(\bm{\rho})}
\newcommand{\ga}{\Gamma^{\bm{\beta}}_2}
\newcommand{\dw}{\bm{\mbox{d}}_{\bm{w}}}
\newcommand{\dws}{\mbox{d}_{w}}

\newcommand{\dwo}{\bm{\mbox{d}}_{\bm{w1}}}
\newcommand{\dwso}{\mbox{d}_{w1}}

\newcommand{\authorfootnotes}{\renewcommand\thefootnote{\@fnsymbol\c@footnote}}%

\numberwithin{equation}{section} \allowdisplaybreaks

\title[Patlak-Keller-Segel system]{On the critical mass Patlak-Keller-Segel system for multi-species populations: global existence and infinite time aggregation}

\author[D. Karmakar]{Debabrata Karmakar}
\address[D. Karmakar]{Tata Institute of Fundamental Research,
Centre For Applicable Mathematics,
Post Bag No 6503, GKVK Post Office,
Sharada Nagar, Chikkabommsandra,
Bangalore 560065,
Karnataka, India}
\email{\tt debabrata@tifrbng.res.in}

\author[G. Wolansky]{Gershon Wolansky}
\address[G. Wolansky]{Technion, Israel Institute of Technology, 32000 Haifa, Israel}
\email{\tt gershonw@math.technion.ac.il}

\keywords{Chemotaxis for multi-species; Patlak-Keller-Segel system; Critical mass; Infinite time aggregation; Minimizing movement scheme; Wasserstein gradient flow}
\subjclass[2010]{Primary 35K65, 35K40; Secondary 35Q92.}

\setcounter{tocdepth}{1}
\begin{document}

\begin{abstract}
We study the global in time existence and long time asymptotic of solutions to the parabolic-elliptic Patlak-Keller-Segel system for the multi-species populations in the whole Euclidean space $\rt.$ 
We prove that at the borderline case of {\it critical mass} there exists a global {\it free energy solution} subject to initial data with finite entropy and second moment. Moreover, we show that as time $t$ approaches to infinity, all the components of the solutions concentrate in the form of a Dirac measure at a single point.
Our approach utilizes the gradient flow structure in Wasserstein space in the spirit of De Giorgi's minimizing movement or the JKO-schemes. Due to the critical mass, the minimization problem in JKO-schemes may not admit a solution in general.  We find a necessary and sufficient criterion for which any minimizing sequence 
remains uniformly bounded in an appropriate topology to ensure the existence of a minimizer.
\end{abstract}

\maketitle
{\small\tableofcontents}

\section{Introduction}  

In this article, we study the global in time existence of solutions to
the parabolic-elliptic Patlak-Keller-Segel system (henceforth abbreviated PKS-system)  for the multi-species populations at the critical mass regime 
(to be defined in a moment). Multi-species PKS-system models the 
evolution of cells (a phenomenon called {\it chemotaxis} in biology) interacting via a self-produced sensitivity agent
(called {\it chemoattractant}) and their natural habitat domain is the 
two-dimensional Euclidean space $\rt.$ Both the cells and the sensitivity agents are also subject to independent diffusive fluctuations. 
The $n$-component multi-species PKS-system  governed by the following system of equations:
\begin{equation}\label{kss}
\begin{cases}
& \partial_t \rho_i(x,t) = \Delta_x \rho_i(x,t) - \sj a_{ij}\nabla_x \cdot 
\left(\rho_i(x,t)\nabla_x u_j(x,t)\right), \ in \ \rt \times (0,\infty),\\
&-\Delta_x u_i(x,t) = \rho_i(x,t), \hspace{5.8 cm} \ \ in \  \rt \times (0,\infty), \\
&\  \rho_i(x,0) = \rho_i^0 , \ \ i=1,\ldots, n,
\end{cases}
\end{equation}
where $\rho_i(x,t)$ denotes the cell density of the $i$-th population, $ u_i(x,t)$ denotes the 
concentration of the {\it chemoattractant}, produced by the $i$-th population and $\rho_i^0$ is the initial cell distribution of the $i$-th population. The constants
$a_{ij}$ measures the sensitivity of the $i$-th population towards the chemical gradient produced
by the $j$-th population. 
If $a_{ij}>0$ (respectively, $a_{ij} <0$) then $i$-th population is attracted (respectively, repelled) by the $j$-th population
known as positive (respectively, negative) chemotaxis. In this article, we assume the sensitivity matrix $(a_{ij})$ is symmetric with non-negative entries 
$a_{ij}\geq 0, \ for \ all \ i,j$, termed  in the literature the {\it conflict free} case.

Since the solutions to the  
Poisson equation $-\Delta u = \rho$ is unique up to a harmonic function, we define
concentration of the chemoattractant $u_i$ by the Newtonian potential of $\rho_i$
\begin{align}\label{newtonian potential}
 u_i(x,t) = -\frac{1}{2\pi}\inte \ln|x-y|\rho_i(y,t) \ dy, \ \ \ i=1,\ldots,n.
\end{align}

There have been several prototype models for chemotaxis in the literature. The first of its kind has been proposed by Patlak \cite{Pa} in $1953$ and Keller and Segel \cite{KS} in $1970.$ The original model of \cite{KS} consists of a coupled parabolic-parabolic equation and comprehends the single species chemotaxis $n=1$. The parabolic-elliptic model is the quasi-equilibrium state of the chemoattractant
and when the time scale of observation is a lot smaller compared to the speed at which the chemoattractant degrades.
Over the last four decades PKS-system \eqref{kss} for $n=1 ( \ \mbox{and} \ a_{11} = a >0)$ has been widely studied in the literature, see \cite{Childress, NSenba, SeSu, SSbook, Subook, BT, BKLN, BD, BCM, BDP, BCCjfa, CD, FM} and the references therein.  

One of the main reasons for so many interests in the mathematical community is that the system adores a critical mass $\beta: = \inte \rho^0(x) \ dx$. In other words, the parameter $\beta$ solely determines the dichotomy between the global in time existence and the chemotactic collapse (or finite time blow up): if the initial number of bacteria is smaller than the critical threshold $\beta \leq 8\pi/ a$, then there exists a global in time solution \cite{BD, BCM, BKLN}. However, if it crosses the critical threshold, i.e. $\beta > 8\pi/ a$, then all the solutions blow-up in finite time. Thus completing the whole picture of the existence vs non-existence expedition. The above mentioned existence vs non-existence phenomena renders us to define the sub-critical regime i.e. $a \beta < 8\pi$, and the critical regime i.e. $a\beta = 8\pi$.
Moreover, in the critical case, if the second moment 
 of the initial data is finite, then the solutions do blow up
 in the form of a Dirac delta measure as time $t$ goes to infinity \cite{BCM}.

Chemotaxis for the multi-species population is quite prevalent, see for instance \cite{H} for biologigal motivations.
The congregation of different species of bacterias and chemicals interacting with each other in a habitat domain affect an individual species as well as the total population. Equation \eqref{kss} serves as an attempt to understand the underlying complex biological mechanism.
 The model \eqref{kss} has been proposed by the second author in \cite{W1} and subsequently further expanded in \cite{Horsurvey1, H}. The existence vs non-existence phenomena is also quite expected in the multi-species systems, because of the two opposing forces of equal order are competing against each other. The smoothing effect diffusion term $\Delta \rho_i$ and the weighted cumulative drift induced by the chemical gradients $\sj a_{ij}\nabla u_j$, which is assisting the cells to accumulate are competing.  However, the notion of sub-critical and critical mass is a lot more involved quantities (see Definition \ref{subcriticalcritical}). For two species model, Espejo et. al. \cite{two3, two5} found the curve in $\rt$ which provides fair analogue of critical mass as in the single species model. Moreover, the existence of global solutions have been obtained in the sub-critical case (i.e., masses which lie strictly below the curve). For masses lying strictly above the mentioned curve, finite time blow-up of solutions have also been observed (see Remark \ref{2species} below). However, the global existence for masses lying on the curve was left open.  We refer the readers to \cite{two1,two2,two3,two4,two5} for related works in $2$-species model. In \cite{KW19} we obtained global in time solutions in the sub-critical regime (see also \cite{HeTadmor}) for $n$-species. Moreover, if the mass crosses the critical zone, then the chemotactic collapse is inevitable. 

The main focus of this article is to study the global existence and large time asymptotic of solutions at the critical mass and for any $n$-number of populations.  The set of all critical mass contained in a $(n-1)$-dimensional ellipsoidal domain. Also, quite surprisingly we found that, at the critical mass regime, all the components of the solutions do concentrate in the form of a Dirac delta measure at the same point when the time approaches infinity.

\subsection{Mathematical analysis of PKS-system}
A solution $\bm{\rho} :=(\rho_1, \ldots, \rho_n)$ to the PKS-system \eqref{kss},
at least formally, satisfies the following identities: 
\begin{itemize}
 \item Conservation of mass:
 \begin{align} \label{mass Conservation}
   \inte \bm{\rho}(x,t) \ dx = \inte \bm{\rho}^0(x) \ dx = \bm{\beta}, \ \ for \ all \ t >0.
 \end{align}
 \item Conservation of the center of mass:
 \begin{align*} 
   \si\inte x\rho_i(x,t) \ dx = \si\inte x\rho^0_i(x) \ dx.
 \end{align*}
\item Free energy dissipation or the free energy identity:
\begin{align*}
 \f(\bm{\rho}(\cdot , t)) + \int_0^t \mathcal{D}_{\f}(\bm{\rho}(\cdot, s)) \ ds = \f(\bm{\rho}^0),
\end{align*}
\end{itemize}
where the free energy $\f$ is defined by
 \begin{align} \label{free energy}
 \f(\bm{\rho}) = \fl,
\end{align}
and the dissipation of free energy $\mathcal{D}_{\f}$ is defined by
\begin{align} \label{dissipation of f}
 \mathcal{D}_{\f}(\bm{\rho}) = \si  \inte \left|\frac{\nabla \rho_{i}(x)}{\rho_{i}(x)} 
 - \sj a_{ij}\nabla u_{j}(x)\right|^2 \rho_{i}(x) \ dx.
\end{align}
\noindent
$\bullet$ If the second moment of the initial condition $M_2(\bm{\rho}^0) :=\si \inte|x|^2\rho^0_i(x) \ dx$ 
is finite then formally 
\begin{align}\label{second moment} 
 M_2(\bm{\rho}(\cdot,t)) = \frac{\Lambda_{I}(\bm{\beta})}{2\pi}t + M_2(\bm{\rho}^0),
\end{align}
where $I = \{1,\ldots, n\}$ and $\Lambda_I(\bm{\beta})$ is a quadratic polynomial in $\bm{\beta}$ defined by
\begin{align}\label{lambda}
 \Lambda_J(\bm{\beta}) := \sum_{i \in J} \beta_i \left(8\pi - \sum_{j \in J} a_{ij}\beta_j\right), \ \ for \ all \ \emptyset \neq J 
 \subset I.
\end{align}

As a consequence of \eqref{second moment}, if $\Lambda_I(\bm{\beta}) < 0,$ then a solution can not exists globally. If $T^*$ is the maximal time of existence then necessarily 
$T^* \leq -\frac{2\pi M_2(\bm{\rho}^0)}{\Lambda_I(\bm{\beta})}.$ On the other hand, if
$\Lambda_I(\bm{\beta}) = 0,$ then the second moment is preserved throughout the time.  

\vspace{0.2 cm}

\noindent
$\bullet$ Moreover, We observe formally that the system \eqref{kss} can be written as
\begin{align} \label{formal gradient flow}
 \partial_t \rho_i = \nabla \cdot \left(\rho_i \nabla \frac{\delta \f}{\delta \rho_i}(\bm{\rho})\right), \ \ i=1,\ldots,n.
\end{align}

As a consequence, the dissipation of energy \eqref{dissipation of f} can be expressed by
\begin{align*}
 \mathcal{D}_{\f}(\bm{\rho}(\cdot,t))=\si\inte \Big|\nabla \frac{\delta\f(\bm{\rho})}{\delta \rho_i}\Big|^2 \rho_i(x,t) \ dx.
\end{align*}

Equation \eqref{formal gradient flow} is the formal structure of a gradient flow of the free energy $\f$
in the space $\mathcal{P}_2^{\beta_1}(\rt)\times \cdots \times \mathcal{P}_2^{\beta_n}(\rt)$ 
equipped with the 2-Wasserstein distance $\dw$ (see section \ref{notation section} for definition),
where $\mathcal{P}_2^{\beta_i}(\rt)$ denotes the space of non-negative Borel measures on $\rt$ with total mass 
$\beta_i$ and finite second moment and
$\frac{\delta \f}{\delta \rho_i}$ denotes the first variation of the functional $\f$ with respect to the 
variable $\rho_i.$
The functional $\f$ on the product space 
$\mathcal{P}_2^{\beta_1}(\rt)\times \cdots \times \mathcal{P}_2^{\beta_n}(\rt)$ is defined by $\f(\bm{\rho})$
if $\bm{\rho} \in \Gamma^{\bm{\beta}},$  where 
\begin{align*}
 \Gamma^{\bm{\beta}} = \left\{\bm{\rho}= (\rho_i)_{i=1}^n | \ \rho_i \in L^1_+(\rt),
\inte \rho_i(x)\ln \rho_i(x) \ dx < +\infty, 
\inte \rho_i(x)\ dx = \beta_i, \right. \\
 \left.  \ \inte \rho_i(x)\ln(1+|x|^2) \ dx<+\infty\right\}
\end{align*}
and $+\infty$ elsewhere.

 Before proceeding further let us first introduce the appropriate notion of a weak solution
 to the PKS-system \eqref{kss}. Throughout this article, we use the notation $\mathcal{H}(\bm{\rho})
 := \si \inte \rho_i \ln \rho_i$ to denote the entropy of the solutions and 
 $\mathcal{H}_+(\bm{\rho})
 := \si \inte \rho_i (\ln \rho_i)_+$ is the positive part of the entropy.
 \begin{Def} \label{weak sol}
  For any initial data $\bm{\rho}^0$ in $\Gamma^{\bm{\beta}}\cap \{\bm{\rho}  \ | \ M_2(\bm{\rho}) < \infty\}$ and $T^*>0$ we say that 
  a non-negative vector valued function $\bm{\rho} \in \left(C([0,T^*); \mathcal{D}^{\prime}(\rt))\right)^n$
  satisfying
  \begin{align*}
   \mathcal{A}_T(\bm{\rho}) :=\sup_{t \in [0,T]} \Big(\mathcal{H}_+(\bm{\rho}(t)) + M_2(\bm{\rho}(t))\Big)
+ \int_0^T \mathcal{D}_{\f}(\bm{\rho}(t)) \ dt < +\infty, \ \forall \ T \in (0,T^*)
  \end{align*}
is a weak solution to the PKS-system \eqref{kss} on the time interval $(0,T^*)$
associated to the initial condition $\bm{\rho}^0$ if $\bm{\rho}$ satisfies \eqref{mass Conservation}
and 
 \begin{align*}
  &\int_0^{T^*} \inte \partial_t\xi(x,t) \rho_i(x,t) \ dxdt + \inte \xi(x,0) \rho_i^0(x) \ dx \\
 &-\int_{0}^{T^*} \inte  \rho_i(x,t)\left( \frac{\nabla_x \rho_i(x,t)}{\rho_i(x,t)}  
 -\sj a_{ij}  \nabla_x u_j(x,t)\right) \cdot \nabla_x \xi(x,t) \ dxdt = 0
 \end{align*}
  for all $\xi \in C_c^{2}([0,T^*)\times \rt)$ and for all $i=1,\ldots, n.$ If $T^* = +\infty$ we say $\bm{\rho}$ is a global weak solution of the system.
  \end{Def}
Note that thanks to the finite dissipation assumption and Cauchy-Schwartz inequality, all the terms in the weak formulation makes sense.

For the convenience of the readers, let us write down our set of assumptions: 
\begin{Ass}\label{AssGamma}
The initial condition satisfies  $\bm{\rho}^{0} \in \ga$  where
\begin{align*}
 \ga:=\left\{ \bm{\rho} \in \Gamma^{\bm{\beta}} \ | \ M_2(\bm{\rho}) := \si \inte |x|^2\rho_i \ dx < +\infty, \ \si \inte x\rho_i(x) \ dx = 0 \right\},
\end{align*}
\end{Ass}
Note that 
the equation is translation invariant so the center of mass is conserved throughout the time. 
\begin{Ass}\label{AssLambda} 
We also assume
\begin{itemize}
 \item $A=(a_{ij})_{n \times n}$ symmetric and non-negative matrix satisfying $a_{ii}>0$ for all \\ 
$i \in I:=\{1,\ldots,n\}.$ 
 \item The initial mass $\bm{\beta}$ is critical i.e.,
 \begin{align} \label{beta condition}
 \Lambda_I(\bm{\beta}) = 0, \ \Lambda_J(\bm{\beta})> 0, \ \mbox{for all} \ \emptyset \neq J  \subsetneq I.
\end{align}
\end{itemize}
\end{Ass}

\subsection{Our approach and major difficulties}
There is an illuminating theory devoted to the gradient flows in Wasserstein-space in their book by Ambrosio, Gigli and Savar\'{e} \cite{AGS}. However, the functional $\f$ fails to satisfy the necessary convexity assumption in \cite{AGS} to have a complete well-posed theory. On the positive side, we can rely upon the PDE based approach of Wasserstein gradient flow. In precise, we could utilize De Giorgi's generalized minimizing movements \cite{DeGiorgi}, to study the PDE \eqref{kss}. 
Such connections first discovered by Otto \cite{Otto, Otto1} and subsequently, Jordan, Kinderlehrer and Otto \cite{JKO} implemented this idea for the class of Fokker-Plank equation and the heat equation. In the literature, 
this approach now referred to minimizing movement scheme or the JKO-scheme:
for a time step $\tau>0,$ we define recursively
\begin{align}\label{formal jko}
 \bm{\rho}^{k}_{\tau} \in \arg\min_{\bm{\rho}\in \ga} \left(\f(\bm{\rho}) +\frac{1}{2\tau} \dw^2
 (\bm{\rho}, \bm{\rho}^{k-1}_{\tau})\right), \ \ \ k \geq 1
\end{align}
with $\bm{\rho}^0_{\tau} = \bm{\rho}^0,$ provided all the minimizers exist. The goal is to show that an appropriate interpolation of the minimizers converge to a solution in the sense of Definition \ref{weak sol}.

The sharp conditions under which the functional involved are bounded below have been well studied in the literature. Indeed,
it follows from the results of \cite{CSW, SW} that
the bound from below of $\f$ and, in particular,  the functional $\bm{\rho}
\longmapsto \G:=\f(\bm{\rho}) + \frac{1}{2\tau}\dw^2(\bm{\rho}, \bm{\eta})$ depends on the 
the following relations of $\bm{\beta}$ and the interaction matrix $A$: 
\begin{align} \label{beta}
 \begin{cases}
  \Lambda_J(\bm{\beta}) \geq 0, \ \ for \ all \ \emptyset \neq J 
 \subset \{1,\ldots, n\}, \\
 \mbox{ {\it if  for  some}} \ J, \ \Lambda_J(\bm{\beta}) = 0, \ then \ a_{ii} + \Lambda_{J\backslash \{i\}}(\bm{\beta})>0, \ \forall 
 i \in J,
 \end{cases}
\end{align}
where $\Lambda_J({\bm{\beta}})$ is defined by \eqref{lambda}.
In particular, it is shown in \cite{CSW,SW} that 
$\Lambda_I(\bm{\beta}) = 0$ and \eqref{beta} is necessary and sufficient condition 
for the bound from below of $\f$ over $\ga.$ 
 Needless to say, if $a_{ii}>0$ for all 
$i \in I,$ then $\Lambda_I(\bm{\beta}) = 0$ and $\Lambda_J(\bm{\beta}) \geq 0$ for $J \neq I$ is necessary and sufficient condition 
for the bound from below for $\f$. In addition, in \cite{SW} the authors showed that $\f$ admits a minimizer in $\Gamma^{\bm{\beta}}$ if and only if \eqref{beta condition}
is satisfied, which allows us to define the notion of critical mass.

\begin{Def} \label{subcriticalcritical}
 Given a symmetric non-negative matrix $A.$  
 \begin{itemize}
  \item $\bm{\beta}$ is said to be sub-critical if
 \begin{align*}
  \Lambda_J(\bm{\beta}) > 0, \ \ \ for \ all \ \emptyset \neq J \subset I.
 \end{align*}
 \item $\bm{\beta}$ is said to be critical if
 \begin{align*}
  \Lambda_I(\bm{\beta}) = 0, \ and \ \Lambda_J(\bm{\beta}) > 0, \ \ \ for \ all \ \emptyset \neq J \subsetneq I.
 \end{align*}

 \end{itemize}
\end{Def}

The criterion for existence of minimizers in \eqref{formal jko} drastically differs from that of $\f.$ Indeed, for sub-critical mass $\bm{\beta}$ there always exists a minimizer in \eqref{formal jko}. And the global existence of solutions to \eqref{kss} has also been dealt with in \cite{KW19}. In the sub-critical case, the sharp condition on $\bm{\beta}$ for the bound from below of $\f$ gives us the uniform entropy bound on any minimizing sequence (the readers can consult \cite{jko2, KW}). However, the arguments of the sub-critical regime do not work in the critical case, and apparently, one can not rule out the possibilities of concentration of minimizing sequences.

There are two major difficulties: first of all, it is not clear that the MM-scheme \eqref{formal jko} is well defined for critical $\bm{\beta}$. In section \ref{section critical}, we study this delicate point and obtain a sufficient criterion for the existence of minimizers. Indeed, we derive a necessary and sufficient criterion (Theorem \ref{mms existance thm}) for which any minimizing sequence satisfy uniform entropy bound. In particular, we show that, given an initial datum $\bm{\rho}^0,$ there exists a $\tau^* \in (0,1)$ such that for every $\tau \in (0,\tau^*),$ the MM-scheme is well defined. 

The next hurdle is to obtain uniform estimates on the minimizers obtained in \eqref{formal jko}. In particular, obtaining uniform entropy estimates is  dealt with in section \ref{section apriori}. 
This  follows from a slight modification of the proof of existence theorem established in section \ref{section critical} (see Remark \ref{mms existence remark}) together with a gain of integrability result Lemma \ref{better integrability}. 

Using the Euler-Lagrange equation (Theorem \ref{el}), we obtain a discrete version of the second moment conservation identity (Lemma \ref{wdestimate}). As a consequence, if the interpolates concentrates at the origin,
in the limit we would get $M_2(\bm{\rho}^0) = 0,$ contradicting $\bm{\rho}^0 \in \ga.$  In passing to the limit in the discrete second moment identity we must obtain uniform integrability on the family $M_2(\rk), k \tau \leq T.$ A very classical result due to de la Vell\'{e}e Poussin's (see Lemma \ref{de la lemma} below) states that a family $\mathcal{Z}$ of $L^1$ measurable functions with respect to a measure $\mu$ is uniformly integrable if and only if there exists a convex super-linear function at infinity $\vu$ satisfying $\sup_{g \in \mathcal{Z}}\int \vu(g) d\mu < \infty.$ In precise, we need to find a convex function $\vu$ satisfying certain growth assumptions such that the $L^1$-norm of $\vu(|x|^2)$ with respect to $\rk$ remains uniformly bounded. 
 This estimate  implies the uniform bound on higher moments in terms of the bound on the initial datum (see Lemma \ref{better integrability} below). 

\subsection{Main results} The main results of this article are as follows:

\begin{Th}\label{main}
Assume $\bm{\rho}^0 \in \ga$ and $\bm{\beta}$ is critical. Further assume that the interaction matrix $(a_{ij})$ have 
strictly positive diagonal entries. Then the PKS-system \eqref{kss} admits 
a global weak solution $\bm{\rho}$ in the sense of Definition \ref{weak sol} with initial data $\bm{\rho}^0.$
Moreover, $\bm{\rho}$ satisfies for every $T>0$
\begin{itemize}
  \item[(a)] $\bm{\rho} \in \left(L^2((0,T)\times \rt)\right)^n \cap \left(L^1(0,T;W^{1,1}(\rt))\right)^n,$ and 
  Fischer information bound:   
 \begin{align*}
 \si\int_{0}^{T}  \inte \left|\frac{\nabla \rho_{i}(x,t)}{\rho_{i}(x,t)}\right|^2 \rho_{i}(x,t) \ dxdt < +\infty.
\end{align*}
\item[(b)] Free energy inequality:
\begin{align*} 
\si\int_{0}^{T}  \inte \left|\frac{\nabla \rho_{i}(x,t)}{\rho_{i}(x,t)} 
 - \sj a_{ij}\nabla u_{j}(x,t)\right|^2 \rho_{i}(x,t) \ dxdt + \f(\bm{\rho}(T)) \leq \f(\bm{\rho}^0).
\end{align*}
\end{itemize}
\end{Th}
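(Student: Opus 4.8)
The plan is to construct the solution as the limit, along a vanishing sequence of time-steps $\tau_m\to 0$, of the discrete-in-time minimizing-movement approximants defined by \eqref{formal jko}. First I would use Theorem~\ref{mms existance thm} together with Remark~\ref{mms existence remark} to fix $\tau^\ast\in(0,1)$ so that for every $\tau\in(0,\tau^\ast)$ the scheme is well posed, producing a sequence $(\rk)_{k\ge 0}\subset\ga$, and I would record the Euler--Lagrange system of Theorem~\ref{el}. The elementary comparison $\f(\rk)+\tfrac1{2\tau}\dw^2(\rk,\rka)\le\f(\rka)$ gives at once the energy monotonicity $\f(\rk)\le\f(\bm{\rho}^0)$ and, after summing over $k$, the bound $\sum_{k\ge1}\tfrac1{2\tau}\dw^2(\rk,\rka)\le\f(\bm{\rho}^0)-\inf_{\ga}\f<+\infty$; the finiteness of this infimum is exactly the sharp lower-bound criterion of \cite{CSW,SW} in the critical regime \eqref{beta condition}. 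From this I obtain the discrete Hölder-in-time estimate $\dw(\rk,\rl)\le C\sqrt{(|k-l|+1)\tau}$ and, via the Euler--Lagrange equations and a De Giorgi variational interpolation, the discrete dissipation bound $\sum_k\tau\,\mathcal D_{\f}(\rk)\lesssim \f(\bm{\rho}^0)-\inf_{\ga}\f$.

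The genuinely new difficulties, absent from the subcritical analysis of \cite{KW19}, concern spatial compactness. Because at critical mass the entropy is not controlled by the energy alone, I would invoke the refinement of the existence argument (Remark~\ref{mms existence remark}) together with the gain-of-integrability Lemma~\ref{better integrability} to produce bounds on $\mathcal H_+(\rk)$ and on $\|\rk\|_{L^2(\rt)}$ that are uniform in $\tau$ and in $k$ with $k\tau\le T$. For the second moment I would iterate the discrete second-moment identity of Lemma~\ref{wdestimate}: since $\Lambda_I(\bm{\beta})=0$ the leading linear-in-time term disappears and only $O(\tau)$ error terms survive, so $M_2(\rk)$ stays bounded uniformly; feeding a convex superlinear $\vu$ into the same computation and invoking de la Vall\'ee Poussin's lemma (Lemma~\ref{de la lemma}) upgrades this to uniform integrability of the family $\{M_2(\rk)\}$, i.e.\ a uniform bound on a slightly higher moment. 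This is what prevents mass from escaping to infinity in the limit, and, together with $M_2(\bm{\rho}^0)>0$, will also later forbid instantaneous total concentration.

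With these estimates I would set $\bar\rho_\tau(t):=\rho^{\lceil t/\tau\rceil}_\tau$, together with the associated discrete momentum field, and extract along a subsequence a limit $\bm{\rho}$ with $\bar\rho_{\tau_m}\to\bm{\rho}$ in $C([0,T];\mathcal P_2(\rt))$ (from the Hölder-in-time Wasserstein bound and a refined Ascoli--Arzel\`a argument using the moment and entropy bounds for pointwise-in-time tightness) and strongly in $L^2((0,T)\times\rt)$ and in $L^1$ (using the uniform $L^2$, entropy and higher-moment bounds for equi-integrability). Strong $L^2$ convergence of each $\rho_j$ is precisely what is needed to pass to the limit in the singular drift $\rho_i\sum_j a_{ij}\nabla u_j$, since $\nabla u_j(x)=-\tfrac1{2\pi}\inte \tfrac{x-y}{|x-y|^2}\rho_j(y)\,dy$ is a nonlocal quadratic function of the densities; the diffusion and time-derivative terms pass to the limit in the standard way from the Euler--Lagrange equations and the momentum estimates, yielding the weak formulation of Definition~\ref{weak sol} on every $(0,T)$ and hence globally. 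Part~(a) then follows by splitting $\mathcal D_{\f}$, passing the Fisher-information part to the limit by lower semicontinuity, and deducing $\bm{\rho}\in L^1(0,T;W^{1,1}(\rt))$ from $\int|\nabla\rho_i|\le(\int|\nabla\rho_i|^2/\rho_i)^{1/2}\beta_i^{1/2}$; part~(b) follows by passing to the limit in the discrete energy--dissipation inequality using joint lower semicontinuity of $\f$ and of $\mathcal D_{\f}$ along the constructed convergences. I expect the main obstacle to be the spatial compactness at critical mass: simultaneously keeping the second moment (and a higher moment) uniformly controlled through the $\Lambda_I(\bm{\beta})=0$ cancellation in Lemma~\ref{wdestimate}, and securing enough integrability (Lemma~\ref{better integrability}) to identify the quadratic drift in the limit.
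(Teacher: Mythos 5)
Your overall architecture is the same as the paper's: set up the MM-scheme, use Theorem~\ref{mms existance thm}/Remark~\ref{mms existence remark} to make it well posed at critical mass, derive the Euler--Lagrange system, exploit the discrete second-moment identity of Lemma~\ref{wdestimate} and the higher-moment iteration of Lemma~\ref{better integrability} with de la Vall\'ee Poussin's lemma to preclude concentration and secure uniform entropy and moment bounds, then pass to the limit and use the De Giorgi interpolation for the free-energy inequality. All of that is faithful to Sections~\ref{section mms}--\ref{section global existence}.

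There is one genuine gap in the convergence step, and it is exactly at the nonlinear drift. You assert that the approximants converge \emph{strongly} in $L^2((0,T)\times\R^2)$ and in $L^1$, and you justify this by ``uniform $L^2$, entropy and higher-moment bounds for equi-integrability.'' Equi-integrability together with tightness gives weak $L^1$ relative compactness via Dunford--Pettis; it does not upgrade weak convergence to strong convergence. To get strong $L^p$ convergence of $\rho_{\tau_m,i}$ one would need genuine space-time compactness, typically an Aubin--Lions/Simon argument combining the spatial regularity supplied by the Fisher information ($\sqrt{\rho_{\tau_m,i}}\in L^2(0,T;H^1_{loc})$) with a discrete time-derivative or equicontinuity estimate, and you never carry this out. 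Without strong convergence of the densities, the term $\int\nabla\xi\cdot\nabla u_{\tau_m,j}\,\rho_{\tau_m,i}$ cannot be passed to the limit as you propose. The paper sidesteps this entirely: it keeps only \emph{weak} $L^1$ and weak $L^2$ convergence of the densities ((C1), (C2)) and instead strengthens the convergence of the \emph{potentials}, using the $H^1_{loc}$ and $L^2$--$H^2_{loc}$ bounds of Lemma~\ref{eti}(d) and Simon's compactness to obtain $u_{\tau_m,j}\to u_j$ strongly in $L^2((0,T);H^1_{loc}(\R^2))$ ((C4)). The drift term then converges by pairing a weakly convergent $\rho_{\tau_m,i}$ against a strongly convergent $\nabla u_{\tau_m,j}$. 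You should either adopt that route, or, if you want strong density convergence, actually invoke an Aubin--Lions-type theorem and supply the necessary time-compactness estimate rather than deducing it from uniform integrability.

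Two smaller inaccuracies worth tightening: (i) ``uniform integrability of the family $\{M_2(\rho^k_\tau)\}$'' is a misstatement — the relevant conclusion from Lemma~\ref{de la lemma} is uniform integrability of $|x|^2$ with respect to the measures $\rho^k_\tau$, equivalently a uniform bound on a strictly higher moment $\int\vu(|x|^2)\rho^k_\tau$; this is what lets you pass to the limit in the second moment and get the contradiction with $M_2(\bm{\rho}^0)>0$. (ii) The entropy bound in the paper is obtained by a contradiction that combines Remark~\ref{mms existence remark} (entropy unbounded forces Dirac concentration), the higher-moment iteration \eqref{iterbbd}--\eqref{higher moment}, and the telescoped second-moment identity; your sketch has the right ingredients but conflates them a little, and it is worth phrasing it as that explicit dichotomy to make the logic airtight.
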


\begin{Rem} \label{2species}
The following two species model has been studied in \cite{two3, two5}:
\begin{align} \label{studied}
\begin{cases}
\partial_t \tilde \rho_1(x,t) = \Delta_x \tilde \rho_1(x,t) - \chi_1 \nabla_x \cdot (\tilde \rho_1(x,t) \nabla_x \tilde u(x,t))\\
\partial_t \tilde \rho_2(x,t) = \Delta_x \tilde \rho_1(x,t) - \chi_2 \nabla_x \cdot (\tilde \rho_2(x,t) \nabla_x \tilde u(x,t))\\
\Delta_x \tilde u(x,t) = \tilde \rho_1(x,t) + \tilde \rho_2(x,t),
\end{cases}
\end{align}
in $\rt \times (0,\infty).$ It has been shown \cite{two3,two5} that provided the masses $\tilde \beta_i = \inte \tilde \rho_i$ satisfy
\begin{align}
\frac{8\pi}{\chi_1} > \tilde \beta_1, \ \frac{8\pi}{\chi_2}> \tilde \beta_2, \ and \ 8\pi\left(\frac{\tilde \beta_1}{\chi_1} + \frac{\tilde \beta_2}{\chi_2}\right) - (\tilde \beta_1 + \tilde \beta_2)^2 >0,
\end{align}
there exists a global in time solution. Moreover, it was observed that if one of the inequality $>$ is replaced by 
$<$ then the global existence may fail. 

Note that for a solution $(\rho_1, \rho_2)$ to \eqref{kss} corresponding to the interaction matrix $a_{11} = \chi_1^2,a_{12} = a_{21} = \chi_1\chi_2, a_{22} = \chi_2^2,$ if we define $\tilde \rho_i = \chi_i \rho_i, \tilde u = \chi_1 u_1 + \chi_2 u_2$ then $(\tilde \rho_1, \tilde \rho_2, \tilde u)$ solves \eqref{studied} and our criterion (Assumption \ref{AssLambda}) for global existence translates into $\frac{8\pi}{\chi_1} > \tilde \beta_1, \ \frac{8\pi}{\chi_2}> \tilde \beta_2$ and
\begin{align} \label{comparison_2}
 8\pi\left(\frac{\tilde \beta_1}{\chi_1} + \frac{\tilde \beta_2}{\chi_2}\right) - (\tilde \beta_1 + \tilde \beta_2)^2 \geq 0.
\end{align}
In particular, if $\frac{\chi_1}{\chi_2} \in (\frac{1}{2}, 2)$ then the lines $\tilde \beta_i = \frac{8\pi}{\chi_i}$ does not intersect the curve \eqref{comparison_2} and hence \eqref{comparison_2} is necessary and sufficient criterion for the global existence of solutions to \eqref{studied}. In this special case, the curve \eqref{comparison_2} truly represents the analogue of critical mass as in the single species model.  
\end{Rem}

A solution to \eqref{kss} satisfying $(a)$ and $(b)$ of Theorem \ref{main} is called free energy solution. 
By now it is well known that free energy solutions are of class $C^{\infty}$ in both space and 
in time away from $t=0.$ Moreover, such solutions found to be unique. The proof of smoothness and uniqueness relies on the a posteriori estimates 
and depends on the novel ideas of DiPerna and Lions renormalized solutions. We refer the interested readers to 
\cite{FM} for more details, where the authors proved smoothness and uniqueness for single species population (see \cite{HeTadmor} for 
multi-species counterpart).

Our second main result deals with the asymptotic behaviour of the solutions as time $t \rightarrow \infty$.

\begin{Th}\label{main2}
  Assume $A$ and $\bm{\beta}$ satisfy the assumptions of Theorem \ref{main} and $\bm{\rho}^0 \in \ga$. Then for any free energy solution $\bm{\rho}$ to \eqref{kss}  
 \begin{align*}
  \lim_{t \rightarrow \infty} \bm{\rho}(\cdot, t) = \bm{\beta} \delta_{\bm{0}},
 \end{align*}
in the weak* convergence of measures.
\end{Th}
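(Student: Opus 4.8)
The plan is to exploit the free energy inequality (b) together with the second-moment conservation identity that is special to the critical case. Since $\bm{\beta}$ is critical, $\Lambda_I(\bm{\beta})=0$, so formula \eqref{second moment} says $M_2(\bm{\rho}(\cdot,t))=M_2(\bm{\rho}^0)$ is constant in $t$; thus the mass cannot escape to infinity and the family $\{\bm{\rho}(\cdot,t)\}_{t\ge 0}$ is tight, so along any subsequence $t_k\to\infty$ we extract a weak* limit $\bm{\mu}=(\mu_1,\dots,\mu_n)$ with $\mu_i(\rt)=\beta_i$. The goal is to show that every such subsequential limit must be $\bm{\beta}\delta_{\bm 0}$, which then upgrades to full convergence. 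Because the center of mass is conserved and equals $0$ (Assumption \ref{AssGamma}), and the second moment is bounded, the limit $\bm{\mu}$ has center of mass $0$ and finite second moment, so it suffices to prove $M_2(\bm{\mu})=0$, i.e. that the second moment is \emph{not} lost in the limit but that each limiting measure concentrates at a single common point, which by the centering must be the origin.

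The mechanism forcing concentration is the free energy. From (b), $\f(\bm{\rho}(t))\le \f(\bm{\rho}^0)$ for all $t$, and $\int_0^\infty \mathcal{D}_{\f}(\bm{\rho}(t))\,dt<\infty$, so $t\mapsto \f(\bm{\rho}(t))$ is (essentially) non-increasing and bounded below — recall that criticality \eqref{beta condition} is exactly the condition under which $\f$ is bounded below on $\ga$ — hence $\f(\bm{\rho}(t))$ converges to some limit $\f_\infty\ge \inf_{\ga}\f$. The key structural fact from \cite{SW} invoked in the excerpt is that under \eqref{beta condition} the infimum of $\f$ over $\Gamma^{\bm\beta}$ is attained, and moreover $\f$ is invariant under the scaling $\rho_i^\lambda(x)=\lambda^2\rho_i(\lambda x)$ (the logarithmic interaction and the entropy transform so that the critical combination is scale invariant up to the additive constant that vanishes precisely when $\Lambda_I(\bm\beta)=0$). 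This scale invariance means minimizing sequences can concentrate: one expects $\f_\infty=\inf\f$ and that the only way to realize the infimum ``at infinite time'' compatibly with a fixed finite second moment is degeneration to a Dirac mass. Concretely, I would argue that if $\bm{\mu}$ is a subsequential weak* limit that is \emph{not} a single Dirac mass, then by lower semicontinuity of entropy under the relevant rescaling and the strict positivity $\Lambda_J(\bm\beta)>0$ for proper subsets $J$ (which prevents partial concentration / splitting into several bubbles at different scales), one derives $\f_\infty>\inf\f$, or alternatively that $\mathcal D_{\f}$ cannot be integrable — a contradiction. The strict inequalities $\Lambda_J(\bm\beta)>0$ for $\emptyset\ne J\subsetneq I$ are what guarantee that all $n$ species must concentrate together at the \emph{same} point rather than each drifting off or concentrating separately.

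More precisely, the route I would take is a concentration-compactness dichotomy on the rescaled densities. Fix $t_k\to\infty$ and let $R(t)$ be a natural length scale (e.g. defined so that a fixed fraction of the mass sits in $B(0,R(t))$, or via $e^{-\f(\bm\rho(t))}$-type quantities); rescale $\bm\rho(\cdot,t_k)$ to unit scale and pass to a limit $\bm\sigma$. By the scale invariance of $\f$ at critical mass, $\f(\bm\sigma)\le \f_\infty$, and $\bm\sigma$ has unit mass-scale and finite, positive second moment. The finiteness of $\int_0^\infty\mathcal D_{\f}\,dt$ forces $\mathcal D_{\f}(\bm\rho(t_k))\to 0$ along a further subsequence, and this should pass to the limit to give $\mathcal D_{\f}(\bm\sigma)=0$, meaning $\bm\sigma$ satisfies the stationary equation $\nabla\rho_i/\rho_i=\sum_j a_{ij}\nabla u_j$, i.e. $\bm\sigma$ is a minimizer of $\f$ on $\ga$ (up to translation/scaling). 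The classification of such stationary states under \eqref{beta condition} — essentially that the minimizer is unique up to the scaling/translation group and is a smooth profile — then pins down $\bm\sigma$; but since the original $M_2(\bm\rho(t))$ is \emph{constant}, the scaling factor $R(t_k)$ must tend either to $0$ or $\infty$, and in the genuinely concentrating case $R(t_k)\to 0$, which back on the original scale says $\bm\rho(\cdot,t_k)\rightharpoonup \bm\beta\delta_{\bm x_0}$; the centering $\int x\,\rho_i=0$ together with $\Lambda_J>0$ on proper subsets forces $\bm x_0=\bm 0$ and the same $\bm x_0$ for every species.

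The main obstacle I anticipate is justifying that the energy dissipation $\mathcal D_{\f}$ passes to the limit under rescaling so as to yield a genuine stationary profile — the dissipation functional is not weakly lower semicontinuous in an elementary way because of the interaction term $\sum_j a_{ij}\nabla u_j$, which involves a singular convolution and is delicate under weak convergence combined with a blow-up of scales. Controlling the interaction term requires the regularity from part (a) ($\bm\rho\in L^2$, Fisher information bound) and likely a careful argument ruling out the ``vanishing'' alternative in the concentration-compactness trichotomy (no mass escaping to infinity at an intermediate scale), for which the conservation of the second moment and the strict positivity $\Lambda_J(\bm\beta)>0$ for proper $J$ are the essential inputs. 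A secondary technical point is handling the non-convexity of $\f$: one cannot quote \cite{AGS} wholesale, so monotonicity of $t\mapsto\f(\bm\rho(t))$ and the limiting Euler--Lagrange/stationarity must be extracted directly from the free energy inequality (b) and the weak formulation in Definition \ref{weak sol}, presumably via the same minimizing-movement/Euler--Lagrange machinery (Theorem \ref{el}) already developed for the existence proof.
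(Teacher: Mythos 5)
Your overall strategy — tightness from the conserved second moment, vanishing dissipation along $t_m\to\infty$, passing to a stationary limit, and then ruling out the non-concentrating alternative — agrees with the paper's skeleton, but the decisive step is genuinely different, and the version you sketch has a gap precisely where you flag the difficulty.

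The paper does \emph{not} rescale. Because $M_2(\bm{\rho}(\cdot,t))\equiv M_2(\bm{\rho}^0)$ is conserved in the critical case, it works at the original length scale: it assumes by contradiction that $\mathcal{H}(\bm{\rho}(\cdot,t_m))$ stays bounded along some $t_m\to\infty$, extracts a weak $L^1$ limit $\bm{\rho}^\infty\in\ga$ with $0<M_2(\bm{\rho}^\infty)<\infty$, and then passes to the limit in the \emph{original} equation on $t\in[t_m,t_m+1]$. The vanishing of $\int_{t_m}^{t_m+1}\mathcal{D}_{\f}$ (by integrability of the dissipation) is fed into the compactness-of-vector-fields lemma (Lemma \ref{cvf}), which directly produces the stationarity relation $\nabla\rho_i^\infty=\sum_j a_{ij}\nabla u_j^\infty\,\rho_i^\infty$ without any rescaling; this sidesteps exactly the obstacle you worry about, namely passing the singular interaction term through a simultaneous weak limit and blow-up of scales. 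A second technical point you do not address: the paper has to establish the time-Hölder estimate in the \emph{$1$-Wasserstein} distance $\dwo$ (via the Kantorovich dual and the dissipation bound), not the $2$-Wasserstein one, because uniqueness of free energy solutions is not proved, so the $\dws$-Hölder bound coming from the MM-construction is not available for an arbitrary free energy solution.

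The crucial ingredient the proposal is missing is the identification of the stationary limit with a solution of the \emph{Liouville system} and the asymptotic estimate of Chipot--Shafrir--Wolansky \cite[Lemma 3.1, Prop.\ 3.1]{CSW}: any such steady state satisfies $\rho_i^\infty(x)\asymp |x|^{-\frac{1}{2\pi}\sum_j a_{ij}\beta_j}$ at infinity, so $M_2(\bm{\rho}^\infty)<\infty$ forces $\sum_j a_{ij}\beta_j>8\pi$ for every $i$, which contradicts $\Lambda_I(\bm{\beta})=\sum_i\beta_i(8\pi-\sum_j a_{ij}\beta_j)=0$. In other words the contradiction is not ``the energy infimum cannot be achieved by a spread-out profile'' (in fact the minimizer of $\f$ over $\Gamma^{\bm{\beta}}$ \emph{does} exist and is smooth), but rather ``any steady state at critical mass necessarily has infinite second moment.'' Your appeal to scale invariance, a concentration-compactness trichotomy, and ``classification of minimizers up to the scaling/translation group'' gestures in this direction but never produces the quantitative decay rate that closes the argument; as you stand, the classification step is a placeholder, and the rescaling machinery you introduce creates the very difficulty (passage of $\mathcal{D}_{\f}$ to the limit across scales) that the paper's fixed-scale argument avoids.
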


The organization of this article are as follows: in section \ref{notation section}, we recalled necessary definitions and known results, that are essential in this article. The topic of section \ref{section mms} focuses on the description of the MM-scheme and their regularity results. Section \ref{section el} devoted to the Euler-Lagrange equations for the minimizers obtained through MM-scheme, and as a consequence, we derived several moment estimates. We proved, in section \ref{section critical}, the necessary and sufficient criterion for the boundedness of every minimizing sequences for the functionals $\G$ at the critical mass regime, which ensures the existence of a minimizer and the well-definedness of the MM-scheme. In section \ref{section apriori}, we established the a priori estimates of the minimizers obtained by MM-scheme. Section \ref{section global existence} devoted to the proof of convergence of the scheme, providing the existence of a global weak solution satisfying the free energy inequality. Finally, in section \ref{section asymptotic}, we proved the aggregation to the Dirac measure as time $t$ approaches to infinity. At the end, we have an appendix section where we give a proof of Lemma \ref{better integrability} and recalled a well known compactness of vector fields lemma.

\section{Notations and Preliminaries}\label{notation section}
In this section we have listed the main notations used in this article and also some of the well known results about the Wasserstein distance and the free 
energy functional $\f.$

Any bold letters will be used to denote $n$-vectors or $n$-vector valued functions. For example $\bm{\rho} = (\rho_1, \ldots, \rho_n) \in (L^1_+(\rt))^n, 
\ \bm{\beta} = (\beta_1, \ldots, \beta_n) \in (\mathbb{R}_+)^n$
and so on. The entropy of a scalar function $\rho \in L^1_+(\rt)$ will be denoted by $\mathcal{H}(\rho) := \inte \rho(x)\ln \rho(x) \ dx.$ For the vector valued functions 
$\bm{\rho} \in (L^1_+(\rt))^n$ the entropy is also denoted by $\mathcal{H}(\bm{\rho})$ and is defined by 
\begin{align*}
 \mathcal{H}(\bm{\rho}) = \si \mathcal{H}(\rho_i).
\end{align*}
We will use similar definitions for the second moment: 
\begin{align*}
M_2(\bm{\rho}) = \si M_2(\rho_i) := \si \inte |x|^2\rho_i(x) \ dx. 
\end{align*}

Let $\mathcal{P}(\rt)$ be the space of all Borel probability measures on $\rt,$ 
$\mathcal{P}_{2}(\rt)$  denotes the subset of $\mathcal{P}(\rt)$ having finite second
moments and $\mathcal{P}_{ac,2}(\rt)$ denotes the subset of $\mathcal{P}_2(\rt)$ which are
absolutely continuous with respect to the Lebesgue measure on $\rt.$
 
Given two elements $\mu, \nu$ of $\mathcal{P}(\rt)$ and a Borel map $T: \rt \rightarrow \rt,$ we say $T$ pushes
forward $\mu$ to $\nu,$ denoted by $T\#\mu = \nu,$ if for every Borel measurable subset $U$ of $\rt,$
$\nu(U) = \mu(T^{-1}(U)).$ Equivalently, 
\begin{align}\label{cov}
 \inte \psi(x) d\nu(x) = \inte \psi(T(x)) d\mu(x), \ \ for \ every \ \psi \in L^1(\rt,d\nu).
\end{align}

\subsection{Wasserstein distance}
Our main sources for the optimal transportation problems and Wasserstein distances are the  first book of Villani \cite{Villani03}, the comprehensive lecture notes on the subject 
of gradient flows in Wasserstein spaces written by Ambrosio, Gigli and Savar\'{e} \cite{AGS} and the recent book by Santambrogio \cite{San}. The results we are going to 
recall in this subsection are very classical and can be found in any of these text books.
\subsubsection{2-Wasserstein distance}
On $\mathcal{P}_2(\rt)$ we can define a distance $\dws,$ using the Monge-Kantorovich transportation problem with quadratic
cost function $c(x,y) = |x-y|^2.$ More precisely, given $\mu, \nu \in \mathcal{P}_2(\rt)$ define
\begin{align}\label{mk}
 \dws^2(\mu,\nu) := \inf_{\pi \in \Pi(\mu,\nu)} \left[\inte \inte |x-y|^2 \ d\pi(x,y) \right],
\end{align}
where
\begin{align*}
\Pi(\mu,\nu):= \{\pi \in \mathcal{P}(\rt \times \rt)| \ (P_1) \# \pi= \mu, \ (P_2) \# \pi= \nu\},
\end{align*}
is the set of transport plans and $P_i: \rt \times \rt \rightarrow \rt$ denotes the canonical projections 
on the $i$-th factor.

A well known theorem of Brenier \cite{Brenier} asserts that: if $\mu \in \mathcal{P}_{ac,2}(\rt)$ then there exists a unique 
(up to additive constants) convex, lower semi continuous function $\varphi$ such that $\nabla \varphi \# \mu = \nu$ and the optimal
transference plan $\hat \pi$ on the right hand side of \eqref{mk} is given by $\hat \pi = (I, \nabla \varphi)
\# \mu,$ where $I : \rt \rightarrow \rt$ is the identity mapping (see \cite{McCann2}, \cite[Theorem $2.12$]{Villani03}). 
As a consequence, we have 
\begin{align} \label{wd1}
\dws^2(\mu, \nu) = \inte |x-\nabla \varphi(x)|^2 d\mu(x), \ \ \ where \ 
 \nabla \varphi \# \mu = \nu.
\end{align}

If $\mu, \nu$ are two non-negative measures on $\rt$ (not necessarily probability measures) satisfying 
the total mass compatibility condition
$\mu(\rt) = \nu(\rt) (=\beta >0),$ then we define the $2$-Wasserstein distance between them as
follows:
\begin{align}\label{wd}
 \dws(\mu,\nu) = \beta^{\frac{1}{2}}\dws\left(\frac{\mu}{\beta}, \frac{\nu}{\beta}\right).
\end{align}
We will denote by $\mathcal{P}^{\beta}(\rt)$ the space of non-negative Borel measures with total mass $\beta$
and $\mathcal{P}^{\beta}_{2}(\rt)$ and $\mathcal{P}^{\beta}_{ac,2}(\rt)$ are defined analogously. We will also
use the bold $\bm{\beta}$ notation in $\mathcal{P}^{\bm{\beta}}(\rt)$ to denote the product space $\mathcal{P}^{\beta_1}(\rt)
\times \cdots \times \mathcal{P}^{\beta_n}(\rt).$

One advantage of defining the Wasserstein distance on $\mathcal{P}^{\beta}(\rt)$ by \eqref{wd} is that, 
if $\mu$ is absolutely continuous with respect 
to the Lebesgue measure and if $\nabla \varphi$ is the gradient of a convex function pushing $\mu/\beta$ forward to 
$\nu/\beta$ then $\nabla \varphi \# \mu = \nu$ and 
\begin{align*}
 \dws^2(\mu, \nu) = \inte |x - \nabla \varphi (x)|^2 d\mu(x),
\end{align*}
where note that $\dws(\mu, \nu)$ is defined by \eqref{wd}.

\subsubsection{Kantorovich-Rubinstein distance}
If the cost function $c(x,y)$ is the Euclidean distance $|x-y|,$ then 
\begin{align}\label{w1distance}
 \dwso(\mu, \nu) = \beta^{\frac{1}{2}}\dwso\left(\frac{\mu}{\beta}, \frac{\nu}{\beta}\right)=\beta^{\frac{1}{2}}\inf_{\pi \in \Pi(\frac{\mu}{\beta}, \frac{\nu}{\beta})} \inte \inte |x-y| \ \pi(dxdy) ,
\end{align}
also defines a distance on $\mathcal{P}_2^{\beta}(\rt)$ and is called the $1$-Wasserstein distance. 
The constant $\beta^{\frac{1}{2}}$ on the right hand side of \eqref{w1distance} does not really matter for 
this article. We defined it  in order to be consistent with our definition of $2$-Wasserstein distance on $\mathcal{P}_2^{\beta}(\rt).$ On unbounded domains, such as 
our case $\rt,$ $\dwso$ is weaker than $\dws$ in the sense that $\dwso( \cdot, \cdot) \leq \dws( \cdot, \cdot).$ There is however a sharp distinctive feature of 
the $1$-Wasserstein distance, it is the pinning property: $\dwso(\mu, \nu)$ depends only on the difference $\mu-\nu.$ This is manifested by the alternative dual 
formulation of $\dwso:$
\begin{align} \label{w1dual}
 \dwso(\mu, \nu) = \beta^{-\frac{1}{2}}\sup\left\{\inte\phi(x)d(\mu - \nu)(x) | \ ||\phi||_{Lip}\leq 1\right\}
\end{align}
where $||\phi||_{Lip} := \sup_{x \neq y}(\phi(x) - \phi(y))/|x-y|.$ Under fairly reasonable assumptions on $\mu, \nu,$ such as both $\mu$ and $\nu$
are absolutely continuous with respect to the Lebesgue measure, one can impose additional assumption that $\phi \in C_c^1$ in the supremum \eqref{w1dual}.

We define the Wasserstein distance for vector valued functions $\bm{\rho}, \bm{\eta} \in \mathcal{P}^{\bm{\beta}}(\rt)$ as follows:
\begin{align*}
 \dw(\bm{\rho}, \bm{\eta})  := \left[\si\dws^2 (\rho_i, \eta_i)\right]^{\frac{1}{2}}; \ \ \ \ 
 \dwo(\bm{\rho}, \bm{\eta}) := \si \dwso(\rho_i, \eta_i).
\end{align*}

We end this subsection with a result regarding the weak* lower semi-continuity  of the $2$-Wasserstein distance: a sequence of measures $\mu^m \in 
\mathcal{P}^{\beta}(\rt)$ is said to converge to $\mu \in \mathcal{P}^{\beta}(\rt)$ in the weak* topology of measures if
\begin{align*}
 \lim_{m \rightarrow \infty}\inte g(x) \ d\mu^m(x) = \inte g(x) \ d\mu(x)
\end{align*}
for every $g \in C_b(\rt),$ where $C_b(\rt)$ denotes the space of all bounded continuous functions in $\rt.$
\begin{Lem}[Weak* lower semi-continuity of $\dws$]\label{semicontinuity of dw}
 Let $\{\mu^m\}, \{\nu^m\}$ be a sequence in $\mathcal{P}^{\beta}_2(\rt)$ converging to $\mu, \nu \in \mathcal{P}^{\beta}_2(\rt),$ respectively, in the weak* topology 
 of measures. Further assume that the second moments $M_2(\mu^m), M_2(\nu^m)$ are uniformly bounded by some constant $C<\infty$ independent of $m.$ Then 
 \begin{align*}
  \dws(\mu, \nu) \leq \liminf_{m \rightarrow \infty} \dws(\mu^m, \nu^m).
 \end{align*}

\end{Lem}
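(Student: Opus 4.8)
The plan is to reduce to probability measures via the scaling \eqref{wd} and then run the classical compactness argument for optimal transport plans, using the uniform second moment bound to secure tightness on the product space $\rt \times \rt$. By \eqref{wd} we may divide all four measures by $\beta$, so it suffices to treat the case $\beta = 1$, in which $\mu^m, \nu^m, \mu, \nu \in \mathcal{P}_2(\rt)$. Extract a subsequence, not relabeled, along which $\dws(\mu^m, \nu^m) \to L := \liminf_{m \to \infty} \dws(\mu^m, \nu^m)$; if $L = +\infty$ the inequality is trivial, so assume $L < \infty$. For each $m$ let $\pi^m \in \Pi(\mu^m, \nu^m)$ be an optimal plan for \eqref{mk}, so that $\dws^2(\mu^m, \nu^m) = \inte \inte |x-y|^2 \, d\pi^m(x,y)$.

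\emph{Compactness and identification of marginals.} By Chebyshev's inequality and the hypothesis, $\mu^m(\{|x| > R\}) \le M_2(\mu^m)/R^2 \le C/R^2$ and likewise for $\nu^m$, uniformly in $m$; hence $\{\mu^m\}$ and $\{\nu^m\}$ are uniformly tight, and consequently so is $\{\pi^m\}$ on $\rt \times \rt$ (if $K$ is compact with $\mu^m(\rt \setminus K), \nu^m(\rt \setminus K) < \varepsilon$, then $\pi^m\big((\rt \times \rt) \setminus (K \times K)\big) < 2\varepsilon$). By Prokhorov's theorem, along a further subsequence $\pi^m \rightharpoonup \pi$ narrowly for some $\pi \in \mathcal{P}(\rt \times \rt)$. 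Since the canonical projections $P_i$ are continuous, $g \circ P_i \in C_b(\rt \times \rt)$ for every $g \in C_b(\rt)$, so $(P_i)\#\pi^m \rightharpoonup (P_i)\#\pi$ weak*; but $(P_1)\#\pi^m = \mu^m \rightharpoonup \mu$ and $(P_2)\#\pi^m = \nu^m \rightharpoonup \nu$, so by uniqueness of weak* limits $\pi \in \Pi(\mu, \nu)$.

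\emph{Passage to the limit in the cost.} For $N \in \mathbb{N}$ the truncated cost $c_N(x,y) := \min\{|x-y|^2, N\}$ is bounded and continuous, whence $\inte \inte c_N \, d\pi = \lim_{m \to \infty} \inte \inte c_N \, d\pi^m \le \liminf_{m \to \infty} \inte \inte |x-y|^2 \, d\pi^m = L^2$. Letting $N \to \infty$ and invoking monotone convergence on the left gives $\inte \inte |x-y|^2 \, d\pi \le L^2$, and since $\pi \in \Pi(\mu, \nu)$ the definition \eqref{mk} yields $\dws^2(\mu, \nu) \le \inte \inte |x-y|^2 \, d\pi \le L^2$, i.e. $\dws(\mu, \nu) \le L$, as claimed.

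\emph{Main obstacle.} The only genuine point of care is the last step: the cost $|x-y|^2$ is unbounded on $\rt \times \rt$, so narrow convergence $\pi^m \rightharpoonup \pi$ does not immediately pass to the limit in $\inte \inte |x-y|^2 \, d\pi^m$; the truncation together with monotone convergence (equivalently, the Portmanteau theorem applied to the nonnegative lower semicontinuous integrand $|x-y|^2$) repairs this and produces exactly the desired one-sided inequality. The uniform second moment bound enters precisely here, through the tightness of the plans needed for Prokhorov's theorem; incidentally it also guarantees $M_2(\mu), M_2(\nu) < \infty$ by weak* lower semicontinuity of the second moment, so that $\dws(\mu, \nu)$ is finite to begin with.
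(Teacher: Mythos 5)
Your proof is correct, and it is the standard textbook argument for weak* lower semicontinuity of the quadratic Wasserstein distance: reduce to probability measures by the mass-scaling \eqref{wd}, pass to a subsequence realizing the $\liminf$, take optimal plans, use the uniform second-moment bound via Chebyshev to get tightness of the marginals (hence of the plans), apply Prokhorov, identify the marginals of the limit plan through continuity of the projections, and handle the unbounded cost $|x-y|^2$ by truncation and monotone convergence (equivalently, Portmanteau for the nonnegative l.s.c.\ integrand). The paper itself does not prove this lemma; it records it as a classical fact drawn from the references it lists (Villani, Ambrosio--Gigli--Savar\'e, Santambrogio), and your argument is exactly the one found in those sources, so there is nothing to reconcile and no gap to report.
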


 \subsection{Properties of the free energy functional}
 We recall a few  properties of the free energy functional $\f$
  whose proof can be found in \cite{SW,KW}.
 \begin{Prop}\label{propertyf}
 The followings hold:
  \begin{itemize}
   \item[(a)] $\f$ is bounded from below on $\ga$ if and only if $\bm{\beta}$ satisfies 
   \begin{align*}
    \Lambda_I(\bm{\beta}) = 0 \ \ and \ \eqref{beta}.
 \end{align*}
 \item[(b)] For any $n$-numbers $\alpha_i >0,$ the functional 
 \begin{align*}
 \f_{\bm{\alpha}}(\bm{\rho}):=\f(\bm{\rho}) + \si \alpha_iM_2(\rho_i) 
 \end{align*}
 is bounded from below on $\ga$ if and only if $\bm{\beta}$ satisfies \eqref{beta}.
 \item[(c)] The functionals $\f$ and $\f_{\bm{\alpha}}$ are sequentially lower semi-continuous with respect to the
 weak topology of $L^1(\rt).$
 \item[(d)] If $\bm{\beta}$ if sub-critical and $\min_{i \in I} \alpha_i>0,$ then all the sub-level 
 sets $\{\f_{\bm{\alpha}} \leq C\}$ are sequentially precompact 
 with respect to the weak topology of $L^1(\rt).$
 \end{itemize}
 \end{Prop}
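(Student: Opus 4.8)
\emph{Plan.} The plan is to obtain (a)--(b) from the sharp \emph{generalized} logarithmic Hardy--Littlewood--Sobolev (log-HLS) inequality (for the lower bounds) together with explicit scaling families (for the necessity of the conditions), and to obtain (c)--(d) by soft functional analysis. Throughout write $\f=\mathcal H+\mathcal W$, where $\mathcal W(\bm\rho):=\si\sj\frac{a_{ij}}{4\pi}\inte\inte\rho_i(x)\ln|x-y|\rho_j(y)\,dx\,dy$ is the interaction term (linear in $A$). For the \emph{necessity} of the conditions in (a) and (b): fix smooth, compactly supported profiles $\bm\mu=(\mu_i)$ with $\inte\mu_i=\beta_i$, normalised so that the competitors below lie in $\ga$, and for $\emptyset\neq J\subset I$, $\lambda>0$ set $\rho_i^\lambda=\lambda^2\mu_i(\lambda\,\cdot)$ for $i\in J$ and $\rho_i^\lambda=\mu_i$ otherwise; a change of variables gives $\f(\bm\rho^\lambda)=\f(\bm\mu)+\frac{\ln\lambda}{4\pi}\Lambda_J(\bm\beta)$, while $M_2(\rho_i^\lambda)=\lambda^{-2}M_2(\mu_i)$ for $i\in J$. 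Hence: $\Lambda_J(\bm\beta)<0$ for some $J$ forces $\f_{\bm\alpha}(\bm\rho^\lambda)\to-\infty$ as $\lambda\to\infty$ (the concentrating masses keeping the second moments bounded); $\Lambda_I(\bm\beta)>0$ forces $\f(\bm\rho^\lambda)\to-\infty$ as $\lambda\to0^+$, but now $M_2\to\infty$ — this obstruction is present for $\f$, explaining why (a) also needs $\Lambda_I(\bm\beta)=0$, but it is killed by the moment penalty in (b); and if $\Lambda_J(\bm\beta)=0$ while $a_{\ell\ell}+\Lambda_{J\setminus\{\ell\}}(\bm\beta)\le0$ for some $\ell\in J$, a two-scale ansatz concentrating $\rho_\ell$ faster than the remaining members of $J$ yields a logarithmic divergence with coefficient proportional to $a_{\ell\ell}+\Lambda_{J\setminus\{\ell\}}(\bm\beta)$, again pushing $\f$ and $\f_{\bm\alpha}$ to $-\infty$ with bounded moments. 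This establishes the necessity of \eqref{beta} in (b) and of \eqref{beta} together with $\Lambda_I(\bm\beta)=0$ in (a).

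For \emph{sufficiency}, the core input is the generalized log-HLS inequality of \cite{SW} (see also \cite{CSW}): condition \eqref{beta} is exactly the hypothesis making $\f(\bm\rho)+\si\alpha_iM_2(\rho_i)$ bounded below on $\ga$ for every $\alpha_i>0$, and when additionally $\Lambda_I(\bm\beta)=0$ the remaining global-dilation mode has vanishing coefficient $\frac1{4\pi}\Lambda_I(\bm\beta)=0$ and $\f$ itself is bounded below. I would reproduce it by induction on $n$: the base case $n=1$ is the classical sharp scalar log-HLS inequality on $\rt$ with optimal constant $8\pi$; in the inductive step one separates $I$ into a ``critical block'' (on which some $\Lambda_J$ vanishes) and a sub-critical remainder, applies the scalar inequality to the diagonal self-interactions, the inductive hypothesis to the blocks, and bounds the off-diagonal terms from below --- here $a_{ij}\ge0$ is essential --- by writing $\ln|x-y|=(\ln|x-y|)_+-(\ln\frac1{|x-y|})_+$ and absorbing the negative part into the entropy via Young's inequality $rs\le r\ln r-r+\e^{s}$; all constants are bookkept by the family $\{\Lambda_J(\bm\beta)\}_{\emptyset\neq J\subset I}$.

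For the lower semicontinuity (c), let $\rho_i^m\rightharpoonup\rho_i$ weakly in $L^1(\rt)$. Comparing the entropy with a Gaussian $\gamma_\sigma(x)=\frac\sigma\pi\e^{-\sigma|x|^2}$ gives $\inte\rho_i\ln\rho_i=\inte\rho_i\ln(\rho_i/\gamma_\sigma)+\beta_i\ln\frac\sigma\pi-\sigma\inte|x|^2\rho_i$, where the relative-entropy integrand $s\mapsto s\ln(s/\gamma_\sigma(x))$ is convex and bounded below by the $L^1$ function $-\gamma_\sigma/\e$, so $\rho\mapsto\inte\rho\ln(\rho/\gamma_\sigma)$ is weakly $L^1$ lower semicontinuous by the classical (De Giorgi--Ioffe) theorem for integral functionals with convex integrand. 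For $\f_{\bm\alpha}$ one picks $0<\sigma<\min_i\alpha_i$ so that the leftover $\si(\alpha_i-\sigma)\inte|x|^2\rho_i$ is itself l.s.c.\ by Fatou; for $\f$ (in the regime where it is bounded below, i.e.\ \eqref{beta} with $\Lambda_I(\bm\beta)=0$) the deficit $-\sigma\inte|x|^2\rho_i$ is controlled by $\mathcal W$, which is again an instance of the generalized log-HLS structure and for which I refer to \cite{SW,KW}. The interaction term is handled by the same splitting $\ln|x-y|=(\ln|x-y|)_+-(\ln\frac1{|x-y|})_+$: since $a_{ij}\ge0$ the nonnegative part is l.s.c.\ by Fatou applied to the product densities $\rho_i^m\otimes\rho_j^m$, while the part governed by the compactly supported, locally integrable kernel $(\ln\frac1{|x-y|})_+$ is treated along a subsequence by a.e.\ convergence and domination via the entropy bound. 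Summing the finitely many contributions yields $\liminf_m\f(\bm\rho^m)\ge\f(\bm\rho)$, and likewise for $\f_{\bm\alpha}$.

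For the precompactness (d), let $\bm\beta$ be sub-critical and $\alpha:=\min_i\alpha_i>0$. For $\theta\in(0,1)$ the matrix $\theta A$ is still sub-critical, since $\Lambda_J^{\theta A}(\bm\beta)=\Lambda_J^{A}(\bm\beta)+(1-\theta)\sum_{i,j\in J}a_{ij}\beta_i\beta_j\ge\Lambda_J^{A}(\bm\beta)>0$; thus part (b) applied to $\theta A$ (for which $\mathcal W^{\theta A}=\theta\mathcal W^A$) with weights $\frac12\alpha_i$ gives $\mathcal H(\bm\rho)+\theta\mathcal W^A(\bm\rho)+\frac12\si\alpha_iM_2(\rho_i)\ge-C_\theta$. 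Writing $\f_{\bm\alpha}(\bm\rho)$ as this quantity plus $(1-\theta)\mathcal W^A(\bm\rho)+\frac12\si\alpha_iM_2(\rho_i)$, and bounding $(1-\theta)\mathcal W^A(\bm\rho)$ from below using $\int_{|x-y|<1}\rho_i(x)\ln\frac1{|x-y|}\,dx\le\mathcal H_+(\rho_i)+2\pi$ (from $rs\le r\ln r-r+\e^{s}$ on $\{|x-y|<1\}$ and $\int_{|z|<1}|z|^{-1}\,dz=2\pi$), we obtain, for $\theta$ close enough to $1$, a quantitative gap $\varepsilon\,\mathcal H_+(\bm\rho)+\frac\alpha2M_2(\bm\rho)\le\f_{\bm\alpha}(\bm\rho)+C_\varepsilon$ for some $\varepsilon>0$. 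Hence on $\{\f_{\bm\alpha}\le C\}$ both $\mathcal H_+(\rho_i)$ and $M_2(\rho_i)$ are uniformly bounded; the de la Vallée-Poussin criterion (with $\Phi(s)=s(\ln s)_+$) then gives uniform integrability of the densities, Chebyshev's inequality gives tightness, and Dunford--Pettis delivers sequential precompactness in the weak topology of $L^1(\rt)$. The genuinely hard step is the sufficiency direction of (a)--(b), i.e.\ the sharp generalized log-HLS inequality: one must control the competition between the (convex, but delocalising) entropy and the (non-local, sign-indefinite) logarithmic interaction simultaneously over all $2^n-1$ index subsets $J$, with the precise numerology encoded in $\{\Lambda_J(\bm\beta)\}$; once it --- together with the quantitative gap used in (d) and the kernel splitting used throughout --- is in hand, parts (c)--(d) are comparatively routine.
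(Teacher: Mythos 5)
The paper does not prove Proposition~\ref{propertyf}: it explicitly defers to \cite{SW,KW} (``We recall a few properties of the free energy functional $\f$ whose proof can be found in \cite{SW,KW}''). Your proposal also defers the hard core --- the sharp generalized log-HLS inequality --- to the same references, and supplies the surrounding soft arguments. The scaling argument for the necessity directions in (a)--(b) is fine in spirit (note the identity $\f(\bm\rho^\lambda)=\f(\bm\mu)+\frac{\ln\lambda}{4\pi}\Lambda_J(\bm\beta)$ is exact only for $J=I$; for $J\subsetneq I$ the cross-interaction terms contribute $O(1)$ corrections, which is still enough). But two of your arguments have genuine gaps.

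For (c) applied to $\f$ alone, the decomposition $\mathcal H(\rho_i)=\int\rho_i\ln(\rho_i/\gamma_\sigma)-\sigma M_2(\rho_i)+c_\sigma\beta_i$ leaves a term $-\sigma M_2(\rho_i)$ that is only \emph{upper} semicontinuous (Fatou gives $\liminf M_2(\rho_i^m)\ge M_2(\rho_i)$, so $-\sigma M_2$ can jump down). Your one-line remark that this ``deficit is controlled by $\mathcal W$'' is not an argument. The standard repair is to work along sequences with uniformly bounded second moment (which is the setting in which the paper actually uses the result, cf.\ Proposition~\ref{gnu proposition}(c)): then the Gaussian deficit satisfies $\sigma\bigl[\limsup_m M_2(\rho_i^m)-M_2(\rho_i)\bigr]\le \sigma R$, and one sends $\sigma\to0$ at the end. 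Without some such moment control the entropy is \emph{not} weakly $L^1$ lower semicontinuous on $\Gamma^{\bm\beta}$, so your argument, as written, cannot close.

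For (d) the decomposition is simply wrong in a way that matters. You write
\begin{equation*}
\f_{\bm\alpha}=\Bigl[\mathcal H+\theta\mathcal W+\tfrac12\si\alpha_iM_2(\rho_i)\Bigr]+(1-\theta)\mathcal W+\tfrac12\si\alpha_iM_2(\rho_i),
\end{equation*}
bound the bracket from below by $-C_\theta$ via (b), and then try to bound $(1-\theta)\mathcal W$ from below by $-(1-\theta)C\,\mathcal H_+ -C'$ using the kernel split. But that gives
\begin{equation*}
\f_{\bm\alpha}\ge -C_\theta - C' -(1-\theta)C\,\mathcal H_+(\bm\rho)+\tfrac12\si\alpha_i M_2(\rho_i),
\end{equation*}
which has the \emph{wrong sign} in front of $\mathcal H_+$: it cannot possibly produce the coercivity $\varepsilon\,\mathcal H_+ +\frac\alpha2 M_2\le\f_{\bm\alpha}+C_\varepsilon$ you assert, no matter how close $\theta$ is to $1$. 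The fix is to keep a fraction of the \emph{entropy}, not of the interaction, outside the bracket: use the weighted functional \eqref{new functional} with $b_i=\theta$ (still bounded below for $\theta$ close to $1$ by sub-criticality) to write
\begin{equation*}
\f_{\bm\alpha}=\Bigl[\theta\mathcal H+\mathcal W+\tfrac12\si\alpha_iM_2(\rho_i)\Bigr]+(1-\theta)\mathcal H(\bm\rho)+\tfrac12\si\alpha_iM_2(\rho_i)\ge -C_\theta +(1-\theta)\mathcal H(\bm\rho)+\tfrac12\si\alpha_iM_2(\rho_i),
\end{equation*}
and then convert control of $\mathcal H$ and $M_2$ into control of $\mathcal H_+$ via the inequality $\mathcal H_-(\rho_i)\le M_2(\rho_i)+C(\beta_i)$ (as in \cite[Lemma 2.6]{BD}, already quoted in the paper), choosing $\theta$ close enough to $1$ that $1-\theta<\frac12\min_i\alpha_i$. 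From there Dunford--Pettis and tightness give the precompactness as you describe.
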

In this sequel we will be using a slight variant of the functional $\f$ and the sharp conditions for bound from below as stated in Proposition \ref{propertyf}(a).
Naturally, it follows form a simple scaling argument. 
Since we assumed $a_{ii} >0$ for all $i \in I,$ let us only 
state a weaker version of it, which is enough for our purpose. Let $A=(a_{ij})$ be as before satisfying $a_{ii}>0$ for all $i \in I$, then for any  $n$-numbers $b_i >0$
the functional 
\begin{equation} \label{new functional}
 \si b_i \inte \rho_i \ln \rho_i + \si \sj \frac{a_{ij}}{4\pi}\inte \inte \rho_i(x) \ln |x-y| \rho_j(y)
\end{equation}
is bounded from below over $\ga$ if and only if $\Lambda_I(\bm{\beta}; \bm{b}) = 0$ and $\Lambda_J(\bm{\beta}; \bm{b}) \geq 0$ for all $\emptyset \neq J \subsetneq I,$
where $\Lambda_J(\bm{\beta}; \bm{b})$ is defined by
\begin{align}\label{lambda  b beta}
\Lambda_J(\bm{\beta}; \bm{b}) := 8\pi\sum_{i \in J} b_i\beta_i  - \sum_{i \in J}\sum_{j \in J} a_{ij}\beta_i\beta_j, \ \ for \ all \ \emptyset \neq J \subset I.
\end{align}

\begin{Rem} \label{remark lhls}
 The same conclusions of Proposition \ref{propertyf} hold true for the functional \eqref{new functional} as well, of course, one needs to 
 frame the conditions in terms of $\Lambda_J(\bm{\beta}; \bm{b}).$ The smallest lower bound of the functional $\f$ or \eqref{new functional}, whenever it is finite,
 will always be denoted by $C_{\mbox{\tiny{LHLS}}}(\bm{\beta}).$ 
\end{Rem}

\begin{Prop} \label{gnu proposition}
\begin{itemize}
\item[(a)] Assume $\bm{\beta}$ satisfies \eqref{beta} and fix $\bm{\eta} \in \ga$ and $\tau>0.$ 
 Then the functional 
 $\mathcal{G}_{\bm{\eta}}: \ga \rightarrow \mathbb{R}$ defined by
 \begin{align*}
  \mathcal{G}_{\bm{\eta}}(\bm{\rho}) := \f(\bm{\rho}) + \frac{1}{2\tau}\dw^2(\bm{\rho},\bm{\eta}) 
 \end{align*}
is bounded from below on $\ga.$ Moreover, $\mathcal{G}_{\bm{\eta}}$ is sequentially lower semicontinuous with respect to the 
weak topology of $L^1(\rt).$
\item[(b)] If $\bm{\beta}$ is sub-critical then all the sub-level sets $\{\mathcal{G}_{\bm{\eta}} \leq C\}$ are 
sequentially precompact 
with respect to the weak topology of $L^1(\rt).$ In particular, the minimization problem $\inf_{\bm{\rho \in \ga}}
\mathcal{G}_{\bm{\eta}}(\bm{\rho})$ admits a solution.

\item[(c)] For any $\bm{\beta}$ satisfying \eqref{beta}, let $\{\bm{\rho}^m\}$ be a minimizing sequence for $\inf_{\bm{\rho \in \ga}} \G.$ If the entropy $\mathcal{H}(\bm{\rho}^m)$
and the second moment $M_2(\bm{\rho}^m)$ are uniformly bounded above by some constant $C,$ independent of $m,$ then up to a subsequence $\bm{\rho}^{m}$ converges,
in the weak topology of $L^1(\rt)$, to a minimizer $\bm{\tilde \rho} \in \ga.$
\end{itemize}
\end{Prop}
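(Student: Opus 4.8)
The plan is to derive all three parts from Proposition \ref{propertyf}, the only extra ingredient being a coercivity estimate that bounds $\dw^2(\cdot,\bm{\eta})$ below by half the second moment. For (a), I would first prove the lower bound. Since $\dws^2(\mu,\beta\delta_0)=M_2(\mu)$ for any measure $\mu$ of mass $\beta$ (by the normalisation \eqref{wd}) and $\dws$ is a genuine metric on $\mathcal{P}_2^{\beta_i}(\rt)$, the triangle inequality gives $M_2(\rho_i)^{1/2}\le\dws(\rho_i,\eta_i)+M_2(\eta_i)^{1/2}$; squaring, summing over $i$ and using $(a+b)^2\le2a^2+2b^2$ yields
\begin{align*}
 \dw^2(\bm{\rho},\bm{\eta})\geq\frac{1}{2}M_2(\bm{\rho})-M_2(\bm{\eta}).
\end{align*}
Hence $\G\geq\F+\frac{1}{4\tau}M_2(\bm{\rho})-\frac{1}{2\tau}M_2(\bm{\eta})$; splitting the moment term in two and applying Proposition \ref{propertyf}(b) with $\alpha_i\equiv\frac{1}{8\tau}$ (legitimate since $\bm{\beta}$ satisfies \eqref{beta}) gives $\G\geq\inf_{\ga}\mathcal{F}_{\bm{\alpha}}+\frac{1}{8\tau}M_2(\bm{\rho})-\frac{1}{2\tau}M_2(\bm{\eta})$. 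This is the bound from below, and it simultaneously shows that $M_2(\bm{\rho})$ is uniformly bounded on each sublevel set $\{\mathcal{G}_{\bm{\eta}}\leq C\}$ by a constant depending only on $C$, $\tau$, $\bm{\beta}$ and $M_2(\bm{\eta})$; this fact will be used repeatedly.

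For the lower semicontinuity in (a), given $\bm{\rho}^m\rightharpoonup\bm{\rho}$ weakly in $L^1(\rt)$, I would pass to a subsequence realising $\liminf_m\mathcal{G}_{\bm{\eta}}(\bm{\rho}^m)$; if it is $+\infty$ there is nothing to prove, so assume it is finite, whence $M_2(\bm{\rho}^m)$ is uniformly bounded along the subsequence. Weak $L^1$ convergence implies weak$*$ convergence of measures, so Lemma \ref{semicontinuity of dw}, applied componentwise with the second slot frozen at $\eta_i$, gives $\dw(\bm{\rho},\bm{\eta})\leq\liminf_m\dw(\bm{\rho}^m,\bm{\eta})$, while Proposition \ref{propertyf}(c) gives $\F\leq\liminf_m\f(\bm{\rho}^m)$; adding and using $\liminf(a_m+b_m)\geq\liminf a_m+\liminf b_m$ finishes (a).

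For (b), sub-criticality lets me use Proposition \ref{propertyf}(d): by the moment bound above, $\{\mathcal{G}_{\bm{\eta}}\leq C\}\subset\{\mathcal{F}_{\bm{\alpha}}\leq C'\}$ with $\alpha_i\equiv\frac{1}{8\tau}$, and the latter set is sequentially weakly $L^1$-precompact. Thus a minimising sequence for $\inf_{\ga}\mathcal{G}_{\bm{\eta}}$, eventually lying in such a sublevel set, has a weakly $L^1$-convergent subsequence $\bm{\rho}^m\rightharpoonup\bm{\tilde\rho}$; precompactness rules out escape of mass, so $\inte\tilde\rho_i\,dx=\beta_i$, the uniform second moment bound forces the first moments to converge so the centre of mass of $\bm{\tilde\rho}$ vanishes, Fatou gives $M_2(\bm{\tilde\rho})<\infty$, and the logarithmic Hardy--Littlewood--Sobolev control of the entropy from below together with the lower semicontinuity from (a) give $\bm{\tilde\rho}\in\ga$ and $\mathcal{G}_{\bm{\eta}}(\bm{\tilde\rho})\leq\inf_{\ga}\mathcal{G}_{\bm{\eta}}$, i.e. $\bm{\tilde\rho}$ is a minimiser. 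For (c), where $\bm{\beta}$ may be critical and Proposition \ref{propertyf}(d) is no longer available, I would recover compactness from the two extra hypotheses: the elementary inequality $\inte\rho_i^m(\ln\rho_i^m)_-\,dx\leq c_0+M_2(\rho_i^m)$, proved by splitting the integral according to whether $\rho_i^m(x)<e^{-|x|^2}$, together with $\mathcal{H}(\bm{\rho}^m)\leq C$ (and the same lower bound applied to the remaining components) bounds $\inte\rho_i^m(\ln\rho_i^m)_+\,dx$ uniformly, so by the de la Vall\'ee Poussin / Dunford--Pettis criterion $\{\bm{\rho}^m\}$ is uniformly integrable, while $M_2(\bm{\rho}^m)\leq C$ gives tightness; hence $\{\bm{\rho}^m\}$ is weakly $L^1$-precompact, and from the point where a convergent subsequence is available the argument of (b) (conservation of mass and centre of mass, Fatou, lower semicontinuity) identifies the limit as a minimiser in $\ga$.

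The hard analytic content here --- the sharp logarithmic Hardy--Littlewood--Sobolev inequality behind the bound from below of $\mathcal{F}_{\bm{\alpha}}$, and the weak $L^1$-precompactness of its sublevel sets in the sub-critical case --- is already contained in Proposition \ref{propertyf} (established in \cite{SW,KW,CSW}). The one genuine subtlety remaining is the weak$*$ lower semicontinuity of $\bm{\rho}\mapsto\dw^2(\bm{\rho},\bm{\eta})$, which fails without a uniform second moment bound; this is exactly why the coercivity estimate above must be extracted first so that Lemma \ref{semicontinuity of dw} can be invoked, and it is also why in part (c) the entropy hypothesis is indispensable --- criticality destroys the automatic precompactness of the sublevel sets of $\mathcal{F}_{\bm{\alpha}}$, and only the bound on $\mathcal{H}(\bm{\rho}^m)$ restores the compactness needed to extract a convergent minimising subsequence.
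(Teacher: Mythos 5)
Your argument is correct and follows the paper's own route: the coercivity bound you derive from the triangle inequality for $\dws$ is exactly the inequality \eqref{1} the Remark after the Proposition invokes, and the rest of the proof (Lemma \ref{semicontinuity of dw} plus Proposition \ref{propertyf} for (a)--(b), and Dunford--Pettis from the entropy and second-moment bounds for (c)) matches the paper's sketch, with the details filled in. The only thing the paper does not spell out, which you also handle somewhat tersely, is the verification that the weak $L^1$ limit $\bm{\tilde\rho}$ actually lies in $\ga$ (finite entropy, correct mass, zero centre of mass), but your treatment via uniform integrability of the first moments and the de la Vall\'ee Poussin criterion is the intended argument and is delegated in the paper to \cite{KW}.
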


\begin{Rem}
 The proof of Proposition \ref{gnu proposition}(a) (b) follows from Proposition \ref{propertyf}, the weak* lower semi-continuity of $\dw$ and the inequality 
 \begin{align} \label{1}
   M_2(\bm{\rho}) \leq 2 \dw^2(\bm{\rho}, \bm{\eta}) + 2M_2(\bm{\eta}).
  \end{align}
The conclusion $(c)$ is just a restatement of $(a)$ phrased in a different way and follows form Dunford-Pettis theorem. From the entropy and second moment bound it follows that the measures $\bm{\rho}^m$ are tight 
and converges to $\bm{\tilde \rho}$ in the weak* convergence of measures having a density with respect to the Lebesgue measure. This is enough to improve the weak*
convergence of measures to the
weak convergence in $L^1(\rt).$ We refer to \cite{KW} for a proof in this framework.
\end{Rem}

 \section{Minimizing Movement Scheme}\label{section mms}
Given two elements $\bm{\rho}, \bm{\eta} \in \mathcal{P}^{\bm{\beta}}_2(\rt),$ recall that the Wasserstein distance between them is defined by
\begin{align*}
 \dw(\bm{\rho}, \bm{\eta}) 
 = \left[\si \dws^2(\rho_i, \eta_i)\right]^{\frac{1}{2}},
\end{align*}
where $\dws$ is defined as in \eqref{wd}. Further recall that we are interested in the case $\bm{\beta}$-critical.
\subsection{Minimizing Movement Scheme (MM-scheme):} Given $\bm{\rho}^0 \in \ga$ and a time step $\tau
\in (0,1)$ sufficiently small, we set $\bm{\rho}^0_{\tau} = \bm{\rho}^0$ and define recursively
\begin{align}\label{mms}
 \bm{\rho}^{k}_{\tau} \in \arg \min_{\bm{\rho}\in \ga} \left\{\f(\bm{\rho}) + \frac{1}{2\tau}\dw^2(\bm{\rho},
 \bm{\rho}^{k-1}_{\tau})\right\}, \ \ k \in \mathbb{N}.
\end{align}
 It is not yet clear that the scheme is well defined. We will prove in section \ref{section critical} (Theorem \ref{mms existance thm}) a sufficient condition for the existence of minimizers. In particular, if the initial datum $\bm{\rho}^0 = \bm{\rho}^0_{\tau}$ satisfies  
\begin{align}\label{r0}
 \f(\bm{\rho}^0_{\tau}) < \inf_{\bm{\rho} \in \ga} \F + \frac{1}{2\tau} M_2(\bm{\rho}^0_{\tau}),
\end{align}
then there exists a solution $\bm{\rho}^1_{\tau}$ to the minimization problem $\min_{\bm{\rho}\in \ga} (\f(\bm{\rho}) + \frac{1}{2\tau}\dw^2(\bm{\rho},
 \bm{\rho}^0_{\tau}))$ and moreover, $\bm{\rho}^1_{\tau}$ satisfies
 \begin{align}\label{r1}
   \f(\bm{\rho}^1_{\tau}) < \inf_{\bm{\rho} \in \ga} \F + \frac{1}{2\tau} M_2(\bm{\rho}^1_{\tau}).
 \end{align}
Evidently, \eqref{r0} is satisfied if $\tau$ is small enough.  For the rest of this article we will always assume $\tau \in (0,\tau^*),$ 
where $\tau^*$ is small enough such that the initial condition $\bm{\rho}^0$ satisfies \eqref{r0} with $\tau$ replaced by $\tau^*$.

Similarly, as a consequence of \eqref{r1} and Theorem \ref{mms existance thm}, $\bm{\rho}^2_{\tau}$ exists and satisfies \eqref{r1} with $\bm{\rho}^1_{\tau}$ 
replaced by $\bm{\rho}^2_{\tau}.$
By induction, we conclude that the minimizing movement scheme \eqref{mms} is well defined for every $\tau \in (0, \tau^*).$

In section \ref{section global existence}, we will prove that a suitable interpolation of these minimizers converge to a weak solution to \eqref{kss}. As mentioned in the introduction that there are two major difficulties to obtain global existence: (a) well definedness of the $MM$-scheme and (b) uniform estimates on the entropy and the second moment. If we carefully trace back  the existing literature on the PKS-system 
(in particular, the gradient flow approach), one can easily understand that the remaining analysis leading to the global existence depends only on these two issues.

The first one of these results (Lemma \ref{regularity} below) deals with the regularity of the minimizers. These regularity results have been proved rigorously in our earlier article 
\cite{KW19}, Lemma $4.3$],   which relies on the flow interchange technique introduced by Matthes-McCann and Savar\'{e} \cite{MMS}.

\subsection{Regularity of the minimizers}
\begin{Lem}\label{regularity}
Let $\tau \in (0,\tau^*)$ and let $\rk$ be a sequence obtained using the MM-scheme \eqref{mms} 
satisfying
\begin{align}\label{theta}
 \si\inte \rki |\ln \rki| \ dx + M_2(\rk) \leq \Theta,
\end{align}
for some constant $\Theta>0.$ Then $\rki \in W^{1,1}(\rt), \frac{\nabla \rki}{\rki} \in L^2(\rt,\rki)$ for all 
$i \in I.$

Moreover, there exists a constant $C(\Theta)$ such that 
\begin{align} \label{w11 estimate}
 \si \inte \left|\frac{\nabla \rki(x)}{\rki(x)}\right|^2 \rki(x) \ dx \leq \frac{2}{\tau}
 \left[\mathcal{H}(\rka) - \mathcal{H}(\rk)\right] + C(\Theta).
\end{align}
\end{Lem}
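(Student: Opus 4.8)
The plan is to run the \emph{flow interchange} technique of Matthes--McCann--Savar\'{e}, using the heat semigroup as auxiliary flow. Write $S_s$ for the heat semigroup applied component by component (so $(S_s\bm\rho)_i$ solves $\partial_s w=\Delta w$, $w|_{s=0}=\rho_i$), and abbreviate
\[
\mathcal I(\bm\rho):=\si\inte\frac{|\nabla\rho_i|^2}{\rho_i}\,dx,\qquad \mathcal W(\bm\rho):=\si\sj a_{ij}\inte\rho_i\rho_j\,dx .
\]
On each component $S_s$ is the $\mathrm{EVI}_0$ gradient flow of the entropy $\mathcal H$ (displacement convexity of $\mathcal H$); it preserves each mass $\beta_i$ and the centre of mass, does not increase $\mathcal H$, and obeys $M_2(S_s\bm\rho)=M_2(\bm\rho)+4s\si\beta_i$. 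Hence, if $\rk$ satisfies \eqref{theta}, the curve $s\mapsto S_s\rk$ stays in $\ga$ and is admissible in \eqref{mms}; comparing $\rk$ with $S_s\rk$ and using the $\mathrm{EVI}_0$ inequality for each component gives, for every $s>0$,
\[
\f(\rk)-\f(S_s\rk)\ \le\ \frac{1}{2\tau}\Big(\dw^2(S_s\rk,\rka)-\dw^2(\rk,\rka)\Big)\ \le\ \frac{1}{\tau}\int_0^s\Big(\mathcal H(\rka)-\mathcal H(S_\sigma\rk)\Big)\,d\sigma .
\]

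For $\sigma>0$ the components $S_\sigma\rki$ are smooth with Gaussian decay, so a direct computation --- integration by parts, $-\Delta u_j=\rho_j$, and the symmetry $a_{ij}=a_{ji}$ for the logarithmic interaction term --- gives the identity $-\tfrac{d}{d\sigma}\f(S_\sigma\rk)=\mathcal I(S_\sigma\rk)-\mathcal W(S_\sigma\rk)$. Integrating this in $\sigma$ and feeding it into the line above,
\[
\int_0^s\Big(\mathcal I(S_\sigma\rk)-\mathcal W(S_\sigma\rk)\Big)\,d\sigma\ \le\ \frac{1}{\tau}\int_0^s\Big(\mathcal H(\rka)-\mathcal H(S_\sigma\rk)\Big)\,d\sigma,\qquad s>0 .
\]
Next I would absorb $\mathcal W$. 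The two-dimensional Gagliardo--Nirenberg--Sobolev inequality applied to $\sqrt\rho$ yields, for every $\varepsilon>0$ and every $\rho\ge0$ with $\|\rho\|_{L^1}+\mathcal H_+(\rho)\le\Theta'$, the bound $\inte\rho^2\,dx\le\varepsilon\inte\frac{|\nabla\rho|^2}{\rho}\,dx+C(\varepsilon,\Theta')$ --- split $\rho=\rho\,\mathbf 1_{\{\rho\le K\}}+\rho\,\mathbf 1_{\{\rho>K\}}$, estimate the high part by Gagliardo--Nirenberg together with $\int_{\{\rho>K\}}\rho\,dx\le\mathcal H_+(\rho)/\ln K$, and take $K=K(\varepsilon,\Theta')$ large. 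Since $\mathcal H_+(S_\sigma\rk)$ stays bounded by some $\Theta'=\Theta'(\Theta)$ along the flow (the entropy decreases and the negative part is controlled by $M_2$), and $\inte(S_\sigma\rki)(S_\sigma\rkj)\,dx\le\tfrac12\big(\inte(S_\sigma\rki)^2\,dx+\inte(S_\sigma\rkj)^2\,dx\big)$, choosing $\varepsilon$ small (depending only on $A$) gives $\mathcal W(S_\sigma\rk)\le\tfrac12\mathcal I(S_\sigma\rk)+C(\Theta)$ for all $\sigma\ge0$. Therefore the integrand on the left is $\ge\tfrac12\mathcal I(S_\sigma\rk)-C(\Theta)$, so
\[
\frac12\int_0^s\mathcal I(S_\sigma\rk)\,d\sigma\ \le\ C(\Theta)\,s+\frac{1}{\tau}\int_0^s\Big(\mathcal H(\rka)-\mathcal H(S_\sigma\rk)\Big)\,d\sigma .
\]
Because $\mathcal I$ is non-increasing along the heat flow, $s\,\mathcal I(S_s\rk)\le\int_0^s\mathcal I(S_\sigma\rk)\,d\sigma$; dividing by $s$, letting $s\downarrow0$, and using the lower semicontinuity of $\mathcal I$ together with $\mathcal H(S_\sigma\rk)\to\mathcal H(\rk)$, we get $\tfrac12\mathcal I(\rk)\le C(\Theta)+\tfrac1\tau(\mathcal H(\rka)-\mathcal H(\rk))$, which is \eqref{w11 estimate}.

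In particular $\mathcal I(\rk)<\infty$, so $\tfrac{\nabla\rki}{\rki}\in L^2(\rt,\rki)$, and by Cauchy--Schwarz $\inte|\nabla\rki|\,dx\le\big(\beta_i\inte\tfrac{|\nabla\rki|^2}{\rki}\,dx\big)^{1/2}<\infty$, so $\rki\in W^{1,1}(\rt)$ for each $i$. This is essentially \cite[Lemma~4.3]{KW19}. The step I expect to be the main obstacle is precisely the absorption of $\mathcal W$: a priori $\rk$ lies only in $L^1\cap L\log L$ with finite second moment, so the bound on $\mathcal I(\rk)$ would be circular without the $\varepsilon$-version of the Gagliardo--Nirenberg--Sobolev inequality with an \emph{additive} constant controlled by the entropy --- it is this refinement that both forces $\mathcal I(\rk)<\infty$ and produces the constant $C(\Theta)$ and the factor $2$ in the statement. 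One also has to check carefully the admissibility of $S_s\rk$ in $\ga$, the integration by parts against the logarithmic kernel (where the Gaussian decay of $S_\sigma\rk$ for $\sigma>0$ is essential), and the monotonicity and lower semicontinuity of the Fisher information along the heat flow used in the passage $s\downarrow0$.
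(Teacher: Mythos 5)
Your proof is correct and takes essentially the same approach as the paper, which does not reprove the lemma but defers to \cite[Lemma~4.3]{KW19}, explicitly noting that the proof there relies on the flow-interchange technique of Matthes--McCann--Savar\'{e} \cite{MMS} --- precisely the heat-semigroup comparison, $\mathrm{EVI}_0$ inequality, dissipation identity $-\tfrac{d}{d\sigma}\f(S_\sigma\rk)=\mathcal I-\mathcal W$, and absorption of the interaction term via an $\varepsilon$-Gagliardo--Nirenberg bound with an $L\log L$ cut-off that you lay out.
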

These regularity results, in particular, the $W^{1,1}$-regularity is necessary in persuasion of the Euler-Lagrange equations (see section \ref{section el}) satisfied by the minimizers. Interested readers can also consult \cite{Lestimate, Lestimate1} for related regularity results in this context.

 \section{The Euler-Lagrange equations and moment estimates} \label{section el}
 
 In this section we will derive several auxiliary results, assuming that a minimizer exists. Our main focus will be on the situation of critical $\bm{\beta}.$ However, most 
 of these results hold true irrespective of $\bm{\beta},$ as long as a minimizer exists. We begin by recalling the Euler-Lagrange equation satisfied by a minimizer.

\begin{Lem}\label{el}
 Let $\et \in \ga$ and $\tau>0$ be given and assume that $\vr$ be a minimizer of  $\inf_{\bm{\rho} \in \ga} \G.$
 Let $\nabla \varphi_i$ denotes the map transporting $\eta_i$ to $\vri$ then
 \begin{itemize}
  \item[(a)] The Euler-Lagrange equation: 
 \begin{align}\label{el1}
  \frac{1}{\tau}\inte (\nabla \varphi_i(x)-x) \cdot  \zeta(\nabla\varphi_i(x))\eta_i(x)dx
   =& -\inte  \zeta(x) \cdot\nabla\vri(x)dx \notag\\ 
   &+ \sj a_{ij} \inte  \zeta(x) \cdot \nabla u_j(x)\vri(x)dx,
 \end{align}
holds for all $i=1,\ldots, n$ and any $\zeta \in C_c^{\infty}(\rt;\rt),$ where
\begin{align*}
 u_j(x):= -\frac{1}{2\pi}\inte \ln|x-y| \vri(y)\ dy.
\end{align*}

\item[(b)] Free energy production term:
the following identity holds
\begin{align}\label{el2}
 \frac{1}{\tau^2}\dws^2(\vri, \eta_i) = 
 \inte \Big|\frac{\nabla \vri(x)}{\vri(x)} - \sj a_{ij}\nabla u_j(x)\Big|^2 \vri(x) dx.
\end{align}

\item[(c)] The consequent approximate weak solution is satisfied
\begin{align} \label{elfinal}
 &\left|\inte \psi(x)\left(\vri(x) - \eta_i(x)\right) + \tau\inte  \nabla\psi(x) \cdot \nabla \vri \right. \notag\\
 &\left. -\tau\sj a_{ij} \inte  \nabla \psi(x) \cdot \nabla u_j(x)\vri(x)dx \right| 
 = O\left(||D^2\psi||_{L^{\infty}}\right) \dws^2(\vri, \eta_i),
\end{align}
for all $\psi \in C_c^{\infty}(\rt).$

\item[(d)] In particular, for all $\psi \in C_c^{\infty}(\rt)$
\begin{align*}
  \frac{1}{\tau}&\si\inte (\nabla \varphi_i(x)-x) \cdot  \nabla\psi(\nabla\varphi_i(x))\eta_i(x)dx
   = \si\inte  \Delta \psi(x) \vri(x)dx \notag\\ 
   &- \si\sj \frac{a_{ij}}{4\pi} \inte \inte  \frac{(\nabla \psi(x) - \nabla \psi(y))\cdot (x-y)}{|x-y|^2}\vri(x)\varrho_j(y)dx.
\end{align*}

\end{itemize}
\end{Lem}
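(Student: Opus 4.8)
The plan is to prove (a) by the standard inner-variation (flow-perturbation) argument and then extract (b)--(d) from it by algebra; throughout I would use that a minimizer $\vr$ of $\inf_{\bm{\rho}\in\ga}\mathcal{G}_{\bm{\eta}}$ inherits the regularity of Lemma~\ref{regularity} (whose proof applies to any such minimizer), so $\vri\in W^{1,1}(\rt)$ and $\nabla\vri/\vri\in L^2(\rt,\vri)$, and also that $\sqrt{\varrho_j}\in H^1(\rt)$ (hence $\varrho_j\in L^1\cap L^p$ for every $p$ and $\nabla u_j\in L^\infty(\rt)$). For (a) I would fix $i_0\in I$ and $\zeta\in C_c^{\infty}(\rt;\rt)$, let $(\Phi_s)_s$ solve $\partial_s\Phi_s=\zeta\circ\Phi_s$, $\Phi_0=\mathrm{id}$, and replace $\varrho_{i_0}$ by $(\Phi_s)_\#\varrho_{i_0}$, leaving the other components fixed. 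Since $\zeta$ is compactly supported, $\Phi_s$ is a diffeomorphism equal to $\mathrm{id}$ off a fixed ball, so mass, $M_2$, the entropy and the $\ln(1+|x|^2)$-moment stay finite; the only requirement of $\ga$ that is broken is $\si\inte x\,\rho_i\,dx=0$, which I would restore by translating \emph{all} components by the common vector $w_s:=-(\sum_k\beta_k)^{-1}\inte(\Phi_s-\mathrm{id})\,\varrho_{i_0}\,dx=O(s)$, giving an admissible competitor $\tilde{\vr}^{\,s}$. The point is that a common translation leaves $\f$ invariant (the entropy is translation-invariant, the Newtonian double integral depends only on $x-y$) and contributes $0$ to the $s$-derivative of $\tfrac1{2\tau}\si\dws^2(\cdot,\eta_i)$ once summed over $i$, because $\si\inte(\nabla\varphi_i(y)-y)\,\eta_i(y)\,dy=\si(\inte x\,\varrho_i\,dx-\inte x\,\eta_i\,dx)=0$; hence only the $(\Phi_s)_\#$-perturbation of $\varrho_{i_0}$ survives. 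Differentiating its three pieces at $s=0$: the entropy yields $\inte\zeta\cdot\nabla\varrho_{i_0}$ (integration by parts, $\varrho_{i_0}\in W^{1,1}$); the Newtonian term yields $-\sj a_{i_0 j}\inte\zeta\cdot\nabla u_j\,\varrho_{i_0}$ (antisymmetry of $(x-y)/|x-y|^2$, $a_{ij}=a_{ji}$, and $\nabla u_j(x)=-\tfrac1{2\pi}\inte\frac{x-y}{|x-y|^2}\varrho_j(y)\,dy$); the Wasserstein term yields $\tfrac1\tau\inte(\nabla\varphi_{i_0}(y)-y)\cdot\zeta(\nabla\varphi_{i_0}(y))\,\eta_{i_0}(y)\,dy$, via the admissible map $\Phi_s\circ\nabla\varphi_{i_0}$ pushing $\eta_{i_0}$ to $(\Phi_s)_\#\varrho_{i_0}$. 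Minimality gives $s^{-1}(\mathcal{G}_{\bm{\eta}}(\tilde{\vr}^{\,s})-\mathcal{G}_{\bm{\eta}}(\vr))\ge0$ for small $s>0$; taking $\limsup_{s\to0^+}$ (licit because the entropy and Newtonian variations are exact and the Wasserstein term is bounded above by a function of $s$ that is smooth and equals it at $s=0$, so no derivative need be shown to exist) and then running the same argument with $-\zeta$ yields the equality \eqref{el1}.

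For (b) I would rewrite the right side of \eqref{el1} as $\inte\zeta(x)\cdot w_i(x)\,\varrho_i(x)\,dx$ with $w_i:=-\nabla\varrho_i/\varrho_i+\sj a_{ij}\nabla u_j\in L^2(\rt,\varrho_i)$ (the Fisher term by Lemma~\ref{regularity}, the drift term since $\nabla u_j\in L^\infty$), push it back through the optimal map ($\nabla\varphi_i\#\eta_i=\varrho_i$) to get $\inte\zeta(\nabla\varphi_i(y))\cdot w_i(\nabla\varphi_i(y))\,\eta_i(y)\,dy$, and — using that $\eta_i,\varrho_i$ are absolutely continuous, so $\nabla\varphi_i$ is $\eta_i$-a.e.\ invertible with inverse $\nabla\varphi_i^{*}$, a Brenier map from $\varrho_i$ to $\eta_i$ — rewrite $\tfrac1\tau(\nabla\varphi_i(y)-y)=h_i(\nabla\varphi_i(y))$ with $h_i:=\tfrac1\tau(\mathrm{id}-\nabla\varphi_i^{*})\in L^2(\rt,\varrho_i)$ (norm squared $\tau^{-2}\dws^2(\varrho_i,\eta_i)$). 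Then \eqref{el1} becomes $\inte(h_i-w_i)\cdot\zeta\,\varrho_i=0$ for every $\zeta\in C_c^\infty(\rt;\rt)$, and density of $C_c^\infty$ in $L^2(\rt,\varrho_i)$ forces $h_i=w_i$ $\varrho_i$-a.e., i.e.\ $\tfrac1\tau(\nabla\varphi_i(y)-y)=w_i(\nabla\varphi_i(y))$ $\eta_i$-a.e.; squaring and integrating against $\eta_i$ gives $\tfrac1{\tau^2}\dws^2(\varrho_i,\eta_i)=\inte|w_i|^2\varrho_i$, which is \eqref{el2}.

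For (c) I would take $\zeta=\nabla\psi$, $\psi\in C_c^\infty(\rt)$, in $\tau\times$\eqref{el1} and compare with $\inte\psi(\varrho_i-\eta_i)=\inte(\psi\circ\nabla\varphi_i-\psi)\,\eta_i$; a second-order Taylor expansion gives $\psi(\nabla\varphi_i(x))-\psi(x)=(\nabla\varphi_i(x)-x)\cdot\nabla\psi(x)+O(\|D^2\psi\|_{L^\infty})|\nabla\varphi_i(x)-x|^2$ and $\nabla\psi(\nabla\varphi_i(x))=\nabla\psi(x)+O(\|D^2\psi\|_{L^\infty})|\nabla\varphi_i(x)-x|$, so $\inte\psi(\varrho_i-\eta_i)$ and $\inte(\nabla\varphi_i-x)\cdot\nabla\psi(\nabla\varphi_i)\,\eta_i$ differ by $O(\|D^2\psi\|_{L^\infty})\inte|\nabla\varphi_i-x|^2\eta_i=O(\|D^2\psi\|_{L^\infty})\dws^2(\varrho_i,\eta_i)$, and substituting \eqref{el1} gives \eqref{elfinal}. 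For (d) I would take $\zeta=\nabla\psi$ in \eqref{el1}, sum over $i$, integrate by parts ($-\si\inte\nabla\psi\cdot\nabla\varrho_i=\si\inte\Delta\psi\,\varrho_i$), substitute $\nabla u_j(x)=-\tfrac1{2\pi}\inte\frac{x-y}{|x-y|^2}\varrho_j(y)\,dy$, and symmetrize $\si\sj a_{ij}\inte\inte\nabla\psi(x)\cdot\frac{x-y}{|x-y|^2}\varrho_i(x)\varrho_j(y)$ under the exchange $(i,x)\leftrightarrow(j,y)$ (legitimate since $a_{ij}=a_{ji}$) into $\tfrac12\si\sj a_{ij}\inte\inte(\nabla\psi(x)-\nabla\psi(y))\cdot\frac{x-y}{|x-y|^2}\varrho_i(x)\varrho_j(y)$, which is the asserted identity.

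I expect the one genuinely non-formal step to be the passage in (b) from \eqref{el1}, which tests $\zeta$ only against vector fields of the special form $\zeta\circ\nabla\varphi_i$, to the pointwise relation $\tfrac1\tau(\nabla\varphi_i-\mathrm{id})=w_i\circ\nabla\varphi_i$: this rests on $\eta_i$-a.e.\ invertibility of the optimal map between absolutely continuous measures (to realize $\tfrac1\tau(\nabla\varphi_i(y)-y)$ itself as a push-back $h_i\circ\nabla\varphi_i$), on $L^2(\varrho_i)$-membership of both $h_i$ and $w_i$ (where Lemma~\ref{regularity} and the $L^\infty$ bound on $\nabla u_j$ enter), and on density of $C_c^\infty$ in $L^2(\varrho_i)$. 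A secondary delicate point is the center-of-mass bookkeeping in (a), handled by the compensating common translation; everything else (admissibility of the competitors, differentiability of the entropy and Newtonian parts along $\Phi_s$) is routine given Lemma~\ref{regularity}.
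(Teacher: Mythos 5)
Your proof is correct and follows the same JKO-style inner-variation approach that the paper refers to in \cite{KW19}, with (b) handled by the standard push-back and density-of-$C_c^\infty$-in-$L^2(\varrho_i)$ argument, and (c), (d) as the routine consequences the paper itself describes. One small simplification in (a): since $\et\in\ga$, translating a competitor $\bm{\rho}$ by $v=-(\si\beta_i)^{-1}\si\inte x\,\rho_i\,dx$ leaves $\f$ unchanged while only decreasing $\si\dws^2(\rho_i,\eta_i)$, so any minimizer of $\mathcal{G}_{\bm{\eta}}$ over $\ga$ already minimizes over all of $\Gamma^{\bm{\beta}}\cap\{M_2<\infty\}$, and one may therefore perturb a single component by $(\Phi_s)_\#$ without the compensating common translation.
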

A proof of Lemma \ref{el} can be found in \cite[Lemma $5.1$]{KW19}. The proof is obtained by pursuing the main ideas of the seminal work by Jordan-Kinderlehrer and Otto \cite{JKO} 
with necessary modifications. See also \cite[Theorem $3.4$]{jko2} for the Euler-Lagrange equation related to the PKS-system of single population. Note that by the regularity results of 
Lemma \ref{regularity}, $\vri \in W^{1,1}(\rt)$ and hence all the terms in \eqref{el1} makes sense. The conclusion $(d)$ is a particular case of $(a)$ and follows by plugging 
$\zeta = \nabla \psi$ and $\nabla u_i (x)= -\frac{1}{2\pi}\inte \frac{x-y}{|x-y|^2} \vri(y) \ dy$ in \eqref{el1} and summing over all $i=1,\ldots,n.$ 

\subsection{Consequences of the Euler Lagrange equation} 
We introduce the following cut off function: \ Let $\Psi$ be a non-negative smooth function such that $\Psi \equiv 1$ in $B(0,1)$ and vanishes outside $B(0,2).$ We set 
$\Psi_R(x):= \Psi(\frac{x}{R})$ where $R>0.$ Then
\begin{align}\label{cut off}
 |\nabla \Psi_R(x)| = O\left(\frac{1}{R}\right),  \ |D^2 \Psi_R(x)| = O\left(\frac{1}{R^2}\right)
\end{align}
uniformly in $x\in\R^2$.

\subsubsection{Center of mass preservation and discrete second moment identity}
\begin{Lem} \label{zero com}
  Let $\et \in \ga$ and $\tau>0$ be given and assume that $\vr$ be a minimizer of  $\inf_{\bm{\rho} \in \ga} \G.$ Then 
  \begin{align*}
   \si \inte x \vri(x) \ dx = \si \inte x\eta_i(x) \ dx = \bm{0}.
  \end{align*}
\end{Lem}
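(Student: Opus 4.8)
The plan is to exploit the Euler–Lagrange identity of Lemma~\ref{el}(d) with a test function approximating the coordinate functions $x \mapsto x_\ell$, $\ell = 1,2$, and to show that the symmetric structure of the interaction term annihilates the contribution of the chemotactic drift, leaving only a telescoping relation between the centers of mass of $\vr$ and $\et$. First I would fix a coordinate direction, take $\psi_R(x) := x_\ell \Psi_R(x)$ with $\Psi_R$ the cut-off from \eqref{cut off}, and substitute $\psi = \psi_R$ into Lemma~\ref{el}(d). For $|x| \leq R$ one has $\psi_R(x) = x_\ell$, so $\nabla \psi_R(x) = e_\ell$, $\Delta \psi_R(x) = 0$, and $\nabla\psi_R(x) - \nabla\psi_R(y) = 0$ on $\{|x|\le R,\ |y|\le R\}$; the remaining pieces, supported where $|x| \geq R$ or $|y| \geq R$, carry factors $O(1/R)$ against the measures $\vri$, $\varrho_j$, whose mass and first moments are finite by $\vr \in \ga$. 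Hence as $R \to \infty$ the right-hand side of Lemma~\ref{el}(d) tends to $0$, and the interaction double integral vanishes identically in the limit because its kernel $(\nabla\psi_R(x)-\nabla\psi_R(y))\cdot(x-y)/|x-y|^2$ is $0$ on the bulk and $O(1/R)$ on the tails.

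Next I would treat the left-hand side. With $\psi = \psi_R$ we get $\nabla\psi_R(\nabla\varphi_i(x)) = e_\ell$ whenever $|\nabla\varphi_i(x)| \le R$, so
\begin{align*}
 \frac{1}{\tau}\si \inte (\nabla\varphi_i(x) - x)\cdot \nabla\psi_R(\nabla\varphi_i(x))\,\eta_i(x)\,dx
 \longrightarrow
 \frac{1}{\tau}\si \inte \big((\nabla\varphi_i(x))_\ell - x_\ell\big)\eta_i(x)\,dx
\end{align*}
as $R \to \infty$, the passage to the limit being justified by dominated convergence using $\int |\nabla\varphi_i|\,\eta_i < \infty$ (a consequence of $\dws(\vri,\eta_i) < \infty$ and $\et \in \ga$, via $|\nabla\varphi_i| \le |x| + |\nabla\varphi_i(x) - x|$ and Cauchy–Schwarz). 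Since $\nabla\varphi_i \# \eta_i = \vri$, the change of variables \eqref{cov} gives $\int (\nabla\varphi_i(x))_\ell \,\eta_i(x)\,dx = \int y_\ell\, \vri(y)\,dy$. Combining the two sides, the limit of Lemma~\ref{el}(d) reads
\begin{align*}
 \frac{1}{\tau}\si \left( \inte y_\ell\, \vri(y)\,dy - \inte x_\ell\, \eta_i(x)\,dx \right) = 0,
\end{align*}
i.e. $\si \int x\, \vri(x)\,dx = \si \int x\, \eta_i(x)\,dx$ for each component; since $\et \in \ga$ the latter is $\bm 0$, which is the claim.

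The main obstacle is purely technical: making the limiting arguments in the truncated test function rigorous, i.e. controlling all the tail terms produced by $\nabla\Psi_R$ and $D^2\Psi_R$ uniformly, and verifying the integrability needed to pass to the limit on the transport side (finiteness of $\int |\nabla\varphi_i|\,\eta_i$ and of the first moments of $\vri$). These all follow from $\vr, \et \in \ga$ together with the $W^{1,1}$ and finite-dissipation regularity of Lemma~\ref{regularity} and the bound \eqref{el2}, so no new idea is required; one just has to organize the estimates. An alternative, slightly cleaner route is to apply Lemma~\ref{el}(a) directly with $\zeta = \nabla\psi_R$ rather than going through part (d) and summing afterwards — but the structure is identical, and the key cancellation is again that the symmetrized interaction kernel vanishes on the diagonal block where $\nabla\psi_R$ is constant.
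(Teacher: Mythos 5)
Your proposal is correct and follows essentially the same route as the paper: both plug the test function $\psi_R(x)=(x\cdot v)\Psi_R(x)$ (you take $v=e_\ell$) into Lemma~\ref{el}(d), use the push-forward relation $\nabla\varphi_i\#\eta_i=\vri$ on the transport side, and let $R\to\infty$ using the finiteness of first moments and of $\int|\nabla\varphi_i|\,\eta_i$ to kill the cut-off errors, while on the right-hand side the Laplacian term and the symmetrized interaction kernel are $O(1/R)$. The only cosmetic difference is that you emphasize the vanishing of $\nabla\psi_R(x)-\nabla\psi_R(y)$ on the diagonal block, whereas the paper directly estimates the kernel as $O(1/R)$; both arguments are equivalent.
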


\begin{proof}
Fix $v \in \rt.$ We will approximate the linear functional $x \cdot v$ by smooth compactly supported functions. We use $\psi_R(x):= (x\cdot v) \Psi_R(x)$
as a test function in the Euler-Lagrange equation Lemma \ref{el1}(d). Let us estimate one by one: the l.h.s of 
Lemma \ref{el}(d) is given by
\begin{align*}
   &\inte  (\nabla\varphi_i(x) - x) \cdot \nabla \psi_R(\nabla \varphi_i(x)) \eta_i(x)dx \\
   =&\inte  x \cdot \nabla \psi_R(x) \vri(x)dx -
   \inte  x \cdot \nabla \psi_R(\nabla \varphi_i(x)) \eta_i(x)dx \\
   =&\inte ( x \cdot v) \Psi_R(x)\vri(x) dx - 
  \inte  (x \cdot v )\Psi_R(\nabla \varphi_i(x))\eta_i(x) dx + O\left(\frac{1}{R}\right) \\
  = & \inte  (x \cdot v )\vri(x) dx - 
  \inte  (x \cdot v) \eta_i(x) dx + o(1) \ \ \ as \ R \rightarrow \infty.
  \end{align*}
  On the other hand, the r.h.s of Lemma \ref{el}(d) can be computed using the estimates
  \begin{align*}
  &|\Delta \psi_R(x)| = O\left(\frac{|x|}{R}\right) + O\Big(\frac{1}{R}\Big), \\
 &\frac{(\nabla \psi_R(x) - \nabla \psi_R(y))\cdot (x-y)}{|x-y|^2} 
 = \frac{1}{|x-y|^2} \Big[v \cdot (x-y) (\Psi_R(x) - \Psi_R(y)) \\
 &+ \big((x-y)\cdot v\big) \big((x-y)\cdot \nabla \Psi_R(x)\big) 
 + (y \cdot v) (\nabla \Psi_R(x) - \nabla \Psi_R(y)) \cdot (x-y)\Big]
 = \ O\left(\frac{1}{R}\right).
\end{align*}
Plugging $\psi_R$ in Lemma \ref{el1}(d) and letting $R \rightarrow \infty$ we obtain
\begin{align*}
 \Big(\si \inte x \vri(x) \ dx - \si \inte x\eta_i(x) \ dx \Big) \cdot v = 0
\end{align*}
for any $v \in \rt.$ 
\end{proof}

\begin{Lem} \label{wdestimate}
  Let $\et \in \ga$ and $\tau>0$ be given and assume that $\vr$ be a minimizer of  $\inf_{\bm{\rho} \in \ga} \G.$ If $\bm{\beta}$ is critical then
  \begin{align*}
   \dw^2 (\vr, \et) = M_2(\et) - M_2(\vr).
  \end{align*}
\end{Lem}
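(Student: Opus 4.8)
The plan is to feed the test function $\psi_R(x):=|x|^2\Psi_R(x)$ into the exact Euler--Lagrange identity of Lemma \ref{el}(d) and then let $R\to\infty$. Formally, choosing $\psi(x)=|x|^2$ there --- so that $\nabla\psi(x)=2x$, $\Delta\psi\equiv 4$ on $\rt$, and $\nabla\psi(x)-\nabla\psi(y)=2(x-y)$ --- produces
\begin{align*}
 \frac{2}{\tau}\si\inte(\nabla\varphi_i(x)-x)\cdot\nabla\varphi_i(x)\,\eta_i(x)\,dx
 = 4\si\beta_i-\frac{1}{2\pi}\si\sj a_{ij}\beta_i\beta_j
 = \frac{1}{2\pi}\Lambda_I(\bm{\beta}),
\end{align*}
where on the right I used the mass identities $\inte\vri=\beta_i$, and on the left the elementary identity $2(\nabla\varphi_i-x)\cdot\nabla\varphi_i=|\nabla\varphi_i-x|^2+|\nabla\varphi_i|^2-|x|^2$ together with $\inte|\nabla\varphi_i-x|^2\eta_i\,dx=\dws^2(\vri,\eta_i)$ (by \eqref{wd1}) and $\inte|\nabla\varphi_i(x)|^2\eta_i(x)\,dx=\inte|x|^2\vri(x)\,dx=M_2(\vri)$ (the change-of-variables formula \eqref{cov}, since $\nabla\varphi_i\#\eta_i=\vri$). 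Thus the left-hand side here equals $\tau^{-1}\big(\dw^2(\vr,\et)+M_2(\vr)-M_2(\et)\big)$, and since $\bm{\beta}$ is critical we have $\Lambda_I(\bm{\beta})=0$; rearranging gives precisely the asserted identity $\dw^2(\vr,\et)=M_2(\et)-M_2(\vr)$.

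To turn this into a rigorous argument I would justify the limit term by term. By Lemma \ref{regularity}, $\vri\in W^{1,1}(\rt)$, so all terms in Lemma \ref{el}(d) make sense, and since $\vr,\et\in\ga$ we have $M_2(\vr),M_2(\et)<\infty$. With $\psi_R$ as above, $\nabla\psi_R(x)=2x$ and $\Delta\psi_R(x)=4$ for $|x|\le R$, while the cut-off estimates \eqref{cut off} yield, uniformly in $R$, $|\nabla\psi_R(x)|\le C|x|$, $|\Delta\psi_R(x)|\le C$ and $\|D^2\psi_R\|_{L^\infty}\le C$. Consequently: (i) $\si\inte\Delta\psi_R\,\vri\,dx\to 4\si\beta_i$ by dominated convergence; (ii) since $\big|\frac{(\nabla\psi_R(x)-\nabla\psi_R(y))\cdot(x-y)}{|x-y|^2}\big|\le\|D^2\psi_R\|_{L^\infty}\le C$ and this quotient tends pointwise to $2$, the double-integral term converges to $-\frac{1}{2\pi}\si\sj a_{ij}\beta_i\beta_j$; (iii) on the left-hand side I split $\inte(\nabla\varphi_i-x)\cdot\nabla\psi_R(\nabla\varphi_i)\,\eta_i\,dx$ over $\{|\nabla\varphi_i|\le R\}$, where the integrand is exactly $2(\nabla\varphi_i-x)\cdot\nabla\varphi_i\,\eta_i$, and its complement, where it is dominated by $C\big(|\nabla\varphi_i|^2+|x|\,|\nabla\varphi_i|\big)\eta_i$. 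Here $|\nabla\varphi_i|^2\eta_i$ is integrable with integral $M_2(\vri)$, and $|x|\,|\nabla\varphi_i|\eta_i$ is integrable by Cauchy--Schwarz with bound $M_2(\eta_i)^{1/2}M_2(\vri)^{1/2}$, so the contribution of $\{|\nabla\varphi_i|>R\}$ tends to $0$ as $R\to\infty$ (recall $\{|\nabla\varphi_i|=\infty\}$ is $\eta_i$-null), and the remaining part converges to $2\inte(\nabla\varphi_i-x)\cdot\nabla\varphi_i\,\eta_i\,dx$. Passing to the limit in Lemma \ref{el}(d) and using $\Lambda_I(\bm{\beta})=0$ then yields the claim.

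I expect the only genuine obstacle to be the tail estimate in step (iii): one must know that the optimal-map quantities $|\nabla\varphi_i|^2\eta_i$ and $|x|\,|\nabla\varphi_i|\eta_i$ lie in $L^1(\rt)$, so that the part of the integral over $\{|\nabla\varphi_i|>R\}$ drops out in the limit, and this is exactly where the finiteness of the second moments of both $\vr$ and $\et$ is used. The other two passages are routine dominated-convergence arguments resting on \eqref{cut off}.
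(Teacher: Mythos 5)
Your proof is correct and follows essentially the same route as the paper: both plug $\psi_R(x)=|x|^2\Psi_R(x)$ into Lemma \ref{el}(d), verify via \eqref{cut off} and dominated convergence that the right-hand side tends to $\Lambda_I(\bm\beta)/(2\pi)=0$ and that the cut-off error terms on the left vanish, and then read off the Wasserstein identity. The only cosmetic difference is that you invoke the algebraic identity $2(\nabla\varphi_i-x)\cdot\nabla\varphi_i=|\nabla\varphi_i-x|^2+|\nabla\varphi_i|^2-|x|^2$ directly, while the paper first isolates the cross term $\si\inte x\cdot\nabla\varphi_i\,\eta_i=M_2(\vr)$ and substitutes it into the expansion of $\dw^2$ --- the two computations are interchangeable.
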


\begin{proof}
 Here we will approximate $|x|^2$ by smooth compactly supported functions. For that matter we choose $\psi_R(x) = |x|^2\Psi_R(x)$ in Lemma \ref{el1}(d). 
 A straight forward computation gives 
 \begin{align*}
  &\Delta \psi_R(x) = 4 \Psi_R(x) + O\Big(\frac{|x|}{R} + \frac{|x|^2}{R^2}\Big), \\
   &\frac{(\nabla \psi_R(x) - \nabla \psi_R(y))\cdot (x-y)}{|x-y|^2} 
 = \frac{1}{|x-y|^2} \Big[2|x-y|^2\Psi_R(x)-2y \cdot (x-y)(\Psi_R(x) - \Psi_R(y)) \\
 &+ |x|^2(x-y)\cdot (\nabla \Psi_R(x) - \nabla \Psi_R(y)) + 
 (|x|^2 - |y|^2) (x-y) \cdot \nabla \Psi_R(y)\Big] \\
 =& \ 2\Psi_R(x) + O\left(\frac{|y|}{R}\right)+O\left(\frac{|x|^2}{R^2}\right) + O\left(\frac{|x| + |y|}{R}\right).
 \end{align*}
 As a consequence, we see that 
 \begin{align*}
  &\si\inte  \Delta \psi_R(x) \vri(x)dx - \si\sj \frac{a_{ij}}{4\pi} \inte \inte  \frac{(\nabla \psi_R(x) - \nabla \psi_R(y))\cdot (x-y)}{|x-y|^2}\vri(x)\varrho_j(y)dx \\
  =& \ 4 \si\inte \Psi_R(x)\vri(x) - \si\sj \frac{a_{ij}}{2\pi}\inte \inte \Psi_R(x)\vri(x)\vrj(y) + O\Big(\frac{1}{R}\Big) \\
  =& \ \frac{\Lambda_I(\bm{\beta})}{2\pi} +o(1), \ \ as \ R \rightarrow \infty.
 \end{align*}
On the other hand 
  \begin{align*}
  &\inte (\nabla \varphi_i(x) - x) \cdot \nabla \psi_R(\nabla \varphi_i(x)) \eta_i(x)dx \\
 =& \ 2\inte|x|^2\Psi_R(x)\vri(x)dx + \inte  x \cdot \nabla\Psi_R(x) |x|^2\vri(x)dx  \\
 &-2\inte  x \cdot \nabla \varphi_i(x) \Psi_R(\nabla \varphi_i(x))\eta_i(x)dx 
 - \inte x \cdot \nabla\Psi_R(\nabla\varphi_i(x)) |\nabla \varphi_i(x)|^2\eta_i(x)dx.
\end{align*}
Note that as $\Psi_R$ vanishes outside $B(0,2R),$ the quantity $|y \nabla \Psi_R(y)|$ remains uniformly bounded for all $y \in \rt$
and converges point wise to $0$ as $R \rightarrow \infty.$ 
It follows that the second and forth terms of the above expression decay to zero as $R\rightarrow\infty$.  Indeed, $|x|^2\varrho_j$ is integrable by assumption so the second term vanish as $R\rightarrow\infty$ by the dominated convergence theorem. As for the forth term, we may estimate
\begin{align*}
 \int_{\R^2}x\cdot\nabla\Psi_R(\nabla\varphi_i))|\nabla\varphi_i|^2\eta_i(x)dx 
 &\leq \ C\int_{\R^2} |x||\nabla\varphi_i(x)|\eta_i(x) dx\\
 &\leq \frac{C}{2}\left(\int_{\R^2}
|x|^2\eta_i(x)dx + \int_{\R^2}|y|^2\varrho_i(y)dy\right) \ . 
 \end{align*}

By the assumed criticality of $\bm{\beta}$ and Lemma \ref{el}-(d)
  we conclude 
\begin{align} \label{wdr1}
\si\inte|x|^2\vri(x)dx = \si \inte  x \cdot \nabla \varphi_i(x) \eta_i(x)dx.
\end{align}
Using \eqref{wdr1} and the definition of the Wasserstein distance we get
\begin{align*}
 \dw^2(\vr, \et) &= \si \inte |x - \nabla\varphi_i(x)|^2 \eta_i(x) \ dx  \\
 &= M_2(\et) + M_2(\vr) - 2\si \inte  x \cdot \nabla \varphi_i(x) \eta_i(x)dx \\
 &= M_2(\et) - M_2(\vr) \ . 
\end{align*}
\end{proof}

\subsubsection{Higher moment estimates}
For the next Lemma, we introduce the de la Vall\'{e}e Poussin convex function. Let $\vu \in C^{\infty}([0,\infty); [0,\infty))$ be a convex function satisfying 
the growth conditions:
\begin{align}\label{p}
\begin{cases}
 &\vu(0)= \vu^{\prime}(0) = 0, \ r \mapsto\vu^{\prime}(r) \ is \ concave, \ \vu(r) \leq r\vu^{\prime}(r) \leq 2 \vu(r) \ for \ r >0, \\
& r \mapsto r^{-1}\vu(r) \ is \ concave, \ \lim_{r \rightarrow \infty}\frac{\vu(r)}{r} = \lim_{r\rightarrow \infty} \vu^{\prime}(r) = \infty.
\end{cases}
\end{align}
We like to give a special emphasize to the convex function $\vu$ and all its properties. It is one of the key step in obtaining uniform bound on the 
entropy of the iterates obtained through minimizing movement scheme (see section \ref{section mms} for the definition).
\begin{Lem}\label{better integrability}
Let $\et \in \ga$ and assume that $\si\vu(|x|^2)\eta_i \in L^1(\rt)$ where $\vu$ is a convex function satisfying all the properties in \eqref{p}. Then there exists a $\tau_0>0$ such that for any $\tau \in (0,\tau_0)$ if the 
minimization problem $\inf_{\bm{\rho} \in \ga} \G$ admits a solution and the minimizer $\vr$ satisfies $\si \vu(|x|^2)\vri \in L^1(\rt)$ then 
\begin{align*}
 \si \inte \vu(|x|^2)\vri(x) \ dx \leq (1 + C_0 \tau) \si \inte \vu(|x|^2)\eta_i(x) \ dx+ C_0 \tau,
\end{align*}
where $C_0$ is a constant depending only on  $A, \bm{\beta}$ and $\tau_0.$ 
\end{Lem}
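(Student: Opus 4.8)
The plan is to use the approximate weak formulation from Lemma \ref{el}(c)/(d) with a test function modelled on $\vu(|x|^2)$ suitably truncated, and to exploit the structural growth conditions \eqref{p} on $\vu$ to absorb the interaction term into the entropy/second moment already controlled by the criticality of $\bm{\beta}$. Concretely, set $\psi(x) = \vu(|x|^2)\Psi_R(x)$ and plug it into the summed Euler--Lagrange identity of Lemma \ref{el}(d). The left-hand side, after using $\nabla\varphi_i\#\eta_i = \vri$ and a Taylor expansion of $\vu(|x|^2)$ together with the concavity of $\vu'$ (which gives $\vu(|y|^2) \le \vu(|x|^2) + \vu'(|x|^2)(|y|^2-|x|^2)$ and hence a one-sided comparison of $\sum_i\int \vu(|x|^2)\vri$ against $\sum_i\int x\cdot\nabla\varphi_i(x)\,\vu'(|x|^2)\eta_i(x)\,dx$ modulo the quadratic-in-$\dws$ error from \eqref{elfinal}), should produce $\sum_i\int\vu(|x|^2)\vri\,dx$ minus $\sum_i\int\vu(|x|^2)\eta_i\,dx$ up to an error of order $\dw^2(\vr,\et)$ times $\|D^2\psi\|_\infty$. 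Here the properties $\vu(0)=\vu'(0)=0$, $r\mapsto r^{-1}\vu(r)$ concave, and $\vu(r)\le r\vu'(r)\le 2\vu(r)$ are exactly what keep $|D^2\psi|$ and $|\nabla\psi|$ controlled by $\vu'(|x|^2)$ and $\vu(|x|^2)/|x|$, so that the error terms are dominated by the integrable quantities.

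Next I would handle the right-hand side: the diffusion term $\sum_i\int\Delta\psi\,\vri$ and the symmetrized interaction double integral. By the symmetry of $(a_{ij})$ one writes the interaction term as $-\sum_{i,j}\frac{a_{ij}}{4\pi}\iint \frac{(\nabla\psi(x)-\nabla\psi(y))\cdot(x-y)}{|x-y|^2}\vri(x)\vrj(y)$; expanding $\nabla\psi(x) = 2x\,\vu'(|x|^2)\Psi_R(x) + (\text{lower order in }R)$, the leading symmetric kernel is, up to $R\to\infty$ corrections, a bounded multiple of $\vu'(|x|^2)+\vu'(|y|^2)$ (using concavity of $\vu'$ and $\vu'(r)\le r\vu''(r)^{-1}\cdots$—more precisely $|2x\vu'(|x|^2)-2y\vu'(|y|^2)|\cdot|x-y|/|x-y|^2$ is estimated by the monotone concave structure of $r\vu'(r^2)$-type quantities). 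The key point is that the criticality $\Lambda_I(\bm\beta)=0$ makes the ``top order'' contribution, where one would get $\big(4\sum_i\beta_i - \sum_{i,j}\frac{a_{ij}}{2\pi}\beta_j\big)$-type coefficients, vanish in the genuinely quadratic-growth regime; what survives is a bound of the form $C(A,\bm\beta)\big(1 + \sum_i\int\vu(|x|^2)\vri + \sum_i\int\vu(|x|^2)\eta_i\big)$, where the $1$ comes from the region $|x|\lesssim 1$ and uses $\vu(r)/r\to\infty$ together with the entropy bound to control $\sum_i\int\vri$ near the origin. Combining with Lemma \ref{wdestimate} (which gives $\dw^2(\vr,\et) = M_2(\et)-M_2(\vr) \le M_2(\et)$) and the L\'evy--type inequality $\|D^2\psi\|_\infty \lesssim C(\tau_0)$ coming from choosing $\tau$ small, one arrives at $\sum_i\int\vu(|x|^2)\vri \le \sum_i\int\vu(|x|^2)\eta_i + C_0\tau\big(1 + \sum_i\int\vu(|x|^2)\vri + \sum_i\int\vu(|x|^2)\eta_i\big)$; since $a\le b + C_0\tau(1+a+b)$ with $C_0\tau<1/2$ yields $a\le (1+C\tau)b + C\tau$, this is the claimed estimate after relabelling the constant and shrinking $\tau_0$.

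The main obstacle, I expect, is making the interaction-term estimate rigorous: one must show that the double integral $\iint\frac{(\nabla\psi(x)-\nabla\psi(y))\cdot(x-y)}{|x-y|^2}\vri(x)\vrj(y)$ with $\psi=\vu(|x|^2)\Psi_R$ is, in the limit $R\to\infty$, bounded by $C(A,\bm\beta)$ times $\big(1 + \sum_i\int\vu(|x|^2)(\vri+\eta_i)\big)$ and not merely by something involving $M_2$ or higher moments that are a priori uncontrolled. This requires a careful splitting into the regions $\{|x-y|\le 1\}$ and $\{|x-y|>1\}$: on the far region the kernel is bounded and one pairs $|\nabla\psi(x)|\lesssim \vu'(|x|^2)|x| \lesssim \vu(|x|^2)/|x| \cdot$ (something bounded) against the total mass; on the near region one uses the mean-value form $|\nabla\psi(x)-\nabla\psi(y)|\le |x-y|\sup\|D^2\psi\|$ on the segment, so the kernel is $O(\|D^2\psi\|_\infty)$, and then $\|D^2\psi(\xi)\|$ for $|\xi|$ near $\max(|x|,|y|)$ is bounded by $C\vu''(|\xi|^2)|\xi|^2 + C\vu'(|\xi|^2) \le C\vu'(|\xi|^2) \le$ (by concavity of $\vu'$ and $|x-y|\le1$) a controlled multiple of $\vu'(|x|^2)+1$, which pairs against $\vu(|x|^2)/|x|$-type bounds via $\vu'(r)\le 2\vu(r)/r$. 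Threading all of these through while keeping track of the critical cancellation is the delicate part; the rest is routine dominated-convergence bookkeeping as $R\to\infty$, and is deferred to the appendix as the excerpt indicates.
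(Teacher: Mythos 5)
Your outline correctly identifies the test function $\psi_R(x)=\vu(|x|^2)\Psi_R(x)$, the decomposition of the double integral, and the final bootstrap $a\le b+C_0\tau(1+a+b)$, but two of your central mechanisms are not right and would not survive being made rigorous.

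First, you propose to control the transport (left-hand) side via the approximate identity \eqref{elfinal} with an error $O(\|D^2\psi_R\|_\infty)\,\dws^2(\vri,\eta_i)$, and then assert $\|D^2\psi_R\|_\infty\lesssim C(\tau_0)$ ``by choosing $\tau$ small.'' This cannot work: $D^2\psi_R$ contains terms of order $\vu'(|x|^2)$ and $|x|^2\vu''(|x|^2)$, and by the last property in \eqref{p} we have $\vu'(r)\to\infty$, so $\|D^2\psi_R\|_\infty$ grows like $\vu'(R^2)$ and is \emph{not} uniformly bounded as $R\to\infty$; it certainly does not depend on $\tau$. The paper avoids this by never invoking \eqref{elfinal}: it uses the exact identity of Lemma~\ref{el}(d), and handles the transport term by the pointwise Young inequality $2x\cdot\nabla\varphi_i\,\vu'(|\nabla\varphi_i|^2)\le |x|^2\vu'(|\nabla\varphi_i|^2)+|\nabla\varphi_i|^2\vu'(|\nabla\varphi_i|^2)$. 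After the change of variables $\nabla\varphi_i\#\eta_i=\vri$, the second term exactly cancels the matching piece on the other side, and the remaining $\int|x|^2\vu'(|\nabla\varphi_i|^2)\eta_i$ is killed by Fenchel--Young together with the Legendre identity $r\vu'(r)-\vu^*(\vu'(r))=\vu(r)$. No second-derivative sup-norm ever appears.

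Second, you claim that the criticality $\Lambda_I(\bm\beta)=0$ is what makes the ``top-order'' interaction contribution vanish. That is a misdiagnosis: for the genuine $|x|^2$ test function (Lemma~\ref{wdestimate}) the coefficients do organize into $\Lambda_I(\bm\beta)$, but for a generic convex $\vu$ they do not, and no such cancellation takes place. What actually happens in the appendix is a \emph{sign} argument rather than a cancellation: one shows, by convexity of $\vu$,
\begin{align*}
\bigl(x\,\vu'(|x|^2)-y\,\vu'(|y|^2)\bigr)\cdot(x-y)\ \ge\ \tfrac12\,|x-y|^2\bigl(\vu'(|x|^2)+\vu'(|y|^2)\bigr)\ \ge\ 0,
\end{align*}
so the leading piece $I_1$ of the interaction kernel is nonnegative. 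Since $a_{ij}\ge 0$, the term $-\sum_{i,j}\frac{a_{ij}}{4\pi}I_1$ is $\le 0$ and can simply be discarded. Criticality of $\bm\beta$ plays no role in this lemma; only the conflict-free sign hypothesis $a_{ij}\ge0$ and the convexity structure of $\vu$ do. Your ``splitting into $\{|x-y|\le1\}$ and $\{|x-y|>1\}$'' paragraph is working hard to prove a bound that isn't needed once the sign of $I_1$ is observed; the remaining pieces $I_2,I_3,I_4$ (coming from the cutoff) are the ones that require care, and they are estimated directly by $C_1\sum_i\int\vu(|x|^2)\vri+C_2$ using $|x\,\nabla\Psi_R(x)|=O(1)$ and the mean-value bound on $\vu(|x|^2)-\vu(|y|^2)$.
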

The proof of Lemma \ref{better integrability} is  rather technically cumbersome. For the moment the validity of the lemma
is taken for granted. We will give a proof in the appendix. We emphasize on the fact that the constant $C_0$ does not depend on the 
$L^1$ norm of $\si\vu(|x|^2)(\vri + \eta_i).$

 \section{The critical case: Existence of minimizers and blow-up behaviour}\label{section critical}
 At this present section, we are going to establish a sufficient criterion for the existence of minimizers of $\G$ at the critical mass regime.
 We found that there are only two possibilities for a minimizing sequence. Either they converge to a minimizer, or, if they were to blow-up, they must concentrate in the form of a Dirac delta measure at a common point.

Our starting point is the following functional inequality, which turns out to be very crucial in the analysis to come.
 \begin{Lem} \label{bound1}
 Assume $\bm{\beta}$ is critical, then
 \begin{align} \label{fi}
  \inf_{\bm{\rho} \in \ga} \G \leq \inf_{\bm{\rho} \in \ga} \F + \frac{1}{2\tau}M_2(\bm{\eta}).
 \end{align}
\end{Lem}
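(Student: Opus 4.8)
The plan is to construct a competitor $\bm{\rho}$ for the left-hand side by dilating an almost-minimizer of $\mathcal{F}$ and sending the dilation parameter to zero, exploiting the fact that at critical mass the free energy $\mathcal{F}$ is scale-invariant (this is exactly where $\Lambda_I(\bm\beta)=0$ enters). Concretely, fix $\varepsilon>0$ and pick $\bm{\sigma}\in\ga$ with $\mathcal{F}(\bm{\sigma})\le \inf_{\bm{\rho}\in\ga}\mathcal{F}+\varepsilon$. For $\lambda>0$ define the rescaled family $\sigma_i^\lambda(x):=\lambda^{-2}\sigma_i(x/\lambda)$; each $\sigma_i^\lambda$ has the same mass $\beta_i$, zero center of mass, and finite entropy, so $\bm{\sigma}^\lambda\in\ga$. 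The first step is the scaling computation for $\mathcal{F}$: the entropy term transforms as $\mathcal{H}(\sigma_i^\lambda)=\mathcal{H}(\sigma_i)-2\beta_i\ln\lambda$, while the interaction term transforms as $\frac{a_{ij}}{4\pi}\iint \sigma_i^\lambda(x)\ln|x-y|\sigma_j^\lambda(y)\,dxdy = \frac{a_{ij}}{4\pi}\iint \sigma_i(x)\ln|x-y|\sigma_j(y)\,dxdy + \frac{a_{ij}}{4\pi}\beta_i\beta_j\ln\lambda$. Summing, the $\ln\lambda$ coefficient is $-2\sum_i\beta_i + \sum_{i,j}\frac{a_{ij}}{4\pi}\beta_i\beta_j = -\frac{1}{4\pi}\big(8\pi\sum_i\beta_i - \sum_{i,j}a_{ij}\beta_i\beta_j\big) = -\frac{\Lambda_I(\bm\beta)}{4\pi}=0$, so $\mathcal{F}(\bm{\sigma}^\lambda)=\mathcal{F}(\bm{\sigma})$ for every $\lambda>0$.

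Next I would estimate the Wasserstein term. Since $\sigma_i^\lambda = (\lambda\,\mathrm{Id})\#\sigma_i$, using the transport map $x\mapsto \lambda x$ (not necessarily optimal, but admissible) one gets $\mathrm{d}_w^2(\sigma_i^\lambda,\eta_i)$ is controlled, but it is cleaner to use the triangle-type inequality $\dws^2(\sigma_i^\lambda,\eta_i)\le \big(\dws(\sigma_i^\lambda,\delta)+\dws(\delta,\eta_i)\big)^2$ after recentering; in fact the direct bound $\dws^2(\sigma_i^\lambda,\eta_i)\le 2\dws^2(\sigma_i^\lambda,\beta_i\delta_0)+2\dws^2(\beta_i\delta_0,\eta_i) = 2\lambda^2 M_2(\sigma_i)/1 \cdot(\text{const}) + 2M_2(\eta_i)$ shows $\dws^2(\sigma_i^\lambda,\eta_i)\to 2M_2(\eta_i)$... which is not quite sharp enough. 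The correct move: $\dws^2(\sigma_i^\lambda,\eta_i)\le M_2(\eta_i)+O(\lambda)$ because as $\lambda\to 0$, $\sigma_i^\lambda\rightharpoonup\beta_i\delta_0$ with $M_2(\sigma_i^\lambda)=\lambda^2 M_2(\sigma_i)\to 0$, and $\dws^2(\mu,\eta_i)\le M_2(\eta_i) + M_2(\mu) + 2\sqrt{M_2(\mu)M_2(\eta_i)}$; since the center of mass of $\eta_i$ need not be zero individually this still gives $\limsup_{\lambda\to0}\dws^2(\sigma_i^\lambda,\eta_i)\le M_2(\eta_i)$, and summing over $i$, $\limsup_{\lambda\to0}\tfrac{1}{2\tau}\dw^2(\bm{\sigma}^\lambda,\bm{\eta})\le \tfrac{1}{2\tau}M_2(\bm{\eta})$.

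Combining, for $\lambda$ small, $\mathcal{G}_{\bm\eta}(\bm{\sigma}^\lambda) = \mathcal{F}(\bm{\sigma}) + \tfrac{1}{2\tau}\dw^2(\bm{\sigma}^\lambda,\bm\eta) \le \inf_{\bm{\rho}\in\ga}\mathcal{F}+\varepsilon + \tfrac{1}{2\tau}M_2(\bm\eta) + o(1)$, hence $\inf_{\bm{\rho}\in\ga}\mathcal{G}_{\bm\eta} \le \inf_{\bm{\rho}\in\ga}\mathcal{F} + \tfrac{1}{2\tau}M_2(\bm\eta)+\varepsilon$, and letting $\varepsilon\to 0$ finishes the proof. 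The one genuine subtlety — the step I expect to be the main obstacle — is making the Wasserstein estimate $\limsup_{\lambda\to 0}\dws^2(\sigma_i^\lambda,\eta_i)\le M_2(\eta_i)$ fully rigorous: one must check that the mass of $\sigma_i^\lambda$ escaping to spatial infinity is negligible (it is, since $M_2(\sigma_i^\lambda)\to0$ forces tightness and concentration at the origin), and then use weak* lower/upper semicontinuity of $\dws$ together with the uniform second-moment bound, invoking Lemma \ref{semicontinuity of dw} in the form that gives convergence $\dws(\sigma_i^\lambda,\eta_i)\to\dws(\beta_i\delta_0,\eta_i)$, and finally compute $\dws^2(\beta_i\delta_0,\eta_i)=M_2(\eta_i)$ directly from the definition \eqref{wd}–\eqref{mk}. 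All other steps are the routine scaling identities recorded above.
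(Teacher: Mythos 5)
Your proof is correct and follows the same route as the paper's: dilate a near-minimizer of $\mathcal{F}$ toward $\bm{\beta}\delta_{\bm{0}}$, use scale invariance of $\mathcal{F}$ at critical mass (this is exactly where $\Lambda_I(\bm{\beta})=0$ enters), and note $\dw^2(\bm{\sigma}^\lambda,\bm{\eta}) \to M_2(\bm{\eta})$. You merely make explicit, via the triangle inequality $\dws(\sigma_i^\lambda,\eta_i)\le\sqrt{M_2(\sigma_i^\lambda)}+\sqrt{M_2(\eta_i)}$, the Wasserstein limit the paper leaves as a $+o(1)$, and your closing worry about escaping mass and semicontinuity is unnecessary since that triangle bound is already rigorous.
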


\begin{proof}
 Let $\{\bm{\rho}^m\} \subset \ga$ be a minimizing sequence for $\inf_{\bm{\rho} \in \ga} \F.$ Choose a sequence $R_m
 \rightarrow \infty$ and let 
 $\bm{\tilde \rho}^m (x) := (R_m)^2 \bm{\rho}^m(R_m x)$.
 It follows that  $ M_2(\bm{\tilde \rho}^m) \rightarrow 0.$ Since $\bm{\beta}$ is critical, 
 $\mathcal{F}(\bm{\tilde \rho}^m)=\mathcal{F}(\bm{\rho}^m)$, 
so is also a minimizing sequence.
 
 The proof follows from the straightforward inequality
 \begin{align*}
  \inf_{\bm{\rho} \in \ga} \G \leq \mathcal{G}_{\bm{\eta}}(\bm{\tilde \rho}^m) 
 &= \mathcal{F}(\bm{\tilde \rho}^m) + \frac{1}{2\tau}\dw^2(\bm{\tilde \rho}^m, \bm{\eta}) \\
  &=  \inf_{\bm{\rho} \in \ga}\mathcal{F}(\bm{\rho}) + \frac{1}{2\tau}M_2(\bm{\eta}) + o(1), \ as \ m \rightarrow \infty.
   \end{align*}
 \end{proof}
 
 Next we state the main result of this section.

 \begin{Th} \label{mms existance thm}
   Assume $A$ and $\bm{\beta}$ satisfies Assumption \ref{AssLambda}  and $\bm{\eta} \in \ga.$ Then either one of the following alternative holds: 
  \begin{itemize}
   \item[(a)] equality holds in \eqref{fi},
   \item[(b)] for every minimizing sequence $\{\bm{\rho}^m\}$ to $\inf_{\bm{\rho} \in \ga} \G$, the entropy $\mathcal{H}(\bm{\rho}^m)$ is uniformly bounded.
  \end{itemize}
  In particular, if $\mathcal{F}(\bm{\eta}) < \inf_{\bm{\rho} \in \ga} \F + \frac{1}{2\tau}M_2(\bm{\eta})$ then the minimization problem $\inf_{\bm{\rho} \in \ga} \G$ 
  admits a solution $\vr \in \ga.$ Moreover, the minimizer $\vr \in \ga$ satisfies
  \begin{align*}
   \mathcal{F}(\vr) < \inf_{\bm{\rho} \in \ga} \F + \frac{1}{2\tau}M_2(\vr).
  \end{align*}

\end{Th}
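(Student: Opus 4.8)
\emph{Proof plan.} Write $L:=\inf_{\bm\rho\in\ga}\mathcal G_{\bm\eta}(\bm\rho)$, $C:=C_{\mbox{\tiny{LHLS}}}(\bm\beta)=\inf_{\bm\rho\in\ga}\mathcal F(\bm\rho)$ (finite by Proposition~\ref{propertyf}(a), since $\bm\beta$ is critical), and $L_0:=C+\tfrac1{2\tau}M_2(\bm\eta)$; Lemma~\ref{bound1} reads $L\le L_0$, so alternative (a) is exactly the equality $L=L_0$. The plan is to prove the dichotomy in the form ``if (b) fails then (a) holds'', and then to read off the last assertion from the dichotomy together with Proposition~\ref{gnu proposition}(c) and Lemma~\ref{wdestimate}. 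So suppose some minimizing sequence $\{\bm\rho^m\}\subset\ga$ for $L$ has $\sup_m\mathcal H(\bm\rho^m)=+\infty$. Since $\mathcal F\ge C$ on $\ga$, from $\mathcal G_{\bm\eta}(\bm\rho^m)\to L$ one gets that $\mathcal F(\bm\rho^m)\le\mathcal G_{\bm\eta}(\bm\rho^m)$ is bounded above, that $\dw^2(\bm\rho^m,\bm\eta)=2\tau\big(\mathcal G_{\bm\eta}(\bm\rho^m)-\mathcal F(\bm\rho^m)\big)$ is bounded, and hence, by \eqref{1}, that $M_2(\bm\rho^m)$ is bounded; along a subsequence $\mathcal F(\bm\rho^m)\to\ell\ge C$, $\dw^2(\bm\rho^m,\bm\eta)\to d\ge0$, with $\ell+\tfrac d{2\tau}=L$. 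The key step, to be established next, is that (along a further subsequence) $\bm\rho^m\rightharpoonup\bm\beta\,\delta_{x^*}$ weak${}^*$ for some single point $x^*\in\rt$. Granting this, Lemma~\ref{semicontinuity of dw} (applicable because the second moments are uniformly bounded) and $\bm\eta\in\ga$ (so $\si\inte y\,\eta_i(y)\,dy=0$) give
\[
 d=\lim_m\dw^2(\bm\rho^m,\bm\eta)\ \ge\ \dw^2(\bm\beta\,\delta_{x^*},\bm\eta)=M_2(\bm\eta)+|x^*|^2\si\beta_i\ \ge\ M_2(\bm\eta),
\]
whence $L=\ell+\tfrac d{2\tau}\ge C+\tfrac1{2\tau}M_2(\bm\eta)=L_0$; combined with $L\le L_0$ this yields equality in \eqref{fi}, i.e.\ (a). This proves that (b) fails implies (a).

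\emph{The concentration step (the main obstacle).} Because $M_2(\bm\rho^m)$ is bounded, so is $\si\inte\rho_i^m(\ln\rho_i^m)_-\,dx$, and therefore $\sup_m\mathcal H(\bm\rho^m)=+\infty$ forces $\sup_m\mathcal H_+(\bm\rho^m)=+\infty$; by de la Vall\'ee Poussin's criterion (Lemma~\ref{de la lemma}) the family $\{\rho_i^m\}_m$ is then not equi-integrable, while it is tight by the uniform second-moment bound. Moreover, since $\mathcal F(\bm\rho^m)$ stays bounded, the blow-up of the entropy cannot come from a vanishing amount of mass (a spike of mass $\to0$ with bounded free energy has bounded entropy), so a concentration--compactness analysis should produce, after a subsequence, a weak${}^*$ limit $\bm\rho^m\rightharpoonup\bm\mu\in\mathcal P^{\bm\beta}(\rt)$ with a nonzero atomic part; writing $\bm\mu=\bm\nu+\sum_k\bm\gamma_k\delta_{x_k}$ with $\bm\nu$ atomless, $\bm 0\neq\bm\gamma_k$ and $\sum_k\bm\gamma_k\le\bm\beta$, localization near each $x_k$ together with the dilation $\bm\rho(x)\mapsto\lambda^2\bm\rho(\lambda x)$ --- under which $\mathcal F$ is invariant on $\ga$ since $\Lambda_I(\bm\beta)=0$, whereas a cluster of masses $\bm\gamma$ contracting at scale $\varepsilon$ contributes a free-energy term of order $-\tfrac1{4\pi}\Lambda_I(\bm\gamma)\ln\varepsilon$ plus bounded terms --- shows that boundedness of $\mathcal F(\bm\rho^m)$ forces $\Lambda_I(\bm\gamma_k)=0$ for every $k$ and that each $\bm\gamma_k$ satisfies the lower-boundedness condition \eqref{beta} (otherwise a sub-cluster could further contract and drive the localized free energy to $-\infty$, against $\mathcal F\ge C$). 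It then remains to rule out a nontrivial atomless remainder ($\bm\nu\not\equiv0$) and more than one cluster: using $\bm\gamma_k\le\bm\beta$ and the polarization identities
\[
 \Lambda_J(\bm\gamma)+\Lambda_J(\bm\beta-\bm\gamma)=\Lambda_J(\bm\beta)+2\sum_{i,j\in J}a_{ij}\,\gamma_i(\beta_j-\gamma_j),\qquad \emptyset\neq J\subseteq I,
\]
together with the \emph{strict} inequalities $\Lambda_J(\bm\beta)>0$ for $J\subsetneq I$ of \eqref{beta condition} and with $a_{ii}>0$, one expects to conclude that the only admissible configuration is a single cluster carrying the whole mass, $\bm\gamma_1=\bm\beta$ and $\bm\nu\equiv0$ --- for instance a cluster supported on one index $i$ would have $\gamma_i=8\pi/a_{ii}$ by $\Lambda_I(\bm\gamma)=0$, which by $\Lambda_{\{i\}}(\bm\beta)>0$ exceeds $\beta_i$ and contradicts $\gamma_i\le\beta_i$. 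Hence $\bm\mu=\bm\beta\,\delta_{x_1}$, which is the claim used above. I expect this ``no-splitting'' argument --- the rigorous form of the delicate point flagged in the Introduction --- to be the principal difficulty of the whole theorem.

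\emph{The last assertion.} Assume $\mathcal F(\bm\eta)<\inf_{\ga}\mathcal F+\tfrac1{2\tau}M_2(\bm\eta)=L_0$. Testing $L$ against the admissible competitor $\bm\eta\in\ga$ gives $L\le\mathcal G_{\bm\eta}(\bm\eta)=\mathcal F(\bm\eta)<L_0$, so (a) fails and, by the dichotomy just proved, (b) holds: every minimizing sequence for $L$ has uniformly bounded entropy. Fix one such sequence $\{\bm\rho^m\}$; as above its second moment is also uniformly bounded, so Proposition~\ref{gnu proposition}(c) provides a subsequence converging weakly in $L^1(\rt)$ to a minimizer $\vr\in\ga$ of $\inf_{\ga}\mathcal G_{\bm\eta}$. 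Since $\vr$ has finite entropy and finite second moment, Lemma~\ref{regularity} gives $\vri\in W^{1,1}(\rt)$ and the Euler--Lagrange identities of Lemma~\ref{el} are available, so Lemma~\ref{wdestimate} (using that $\bm\beta$ is critical) yields $\dw^2(\vr,\bm\eta)=M_2(\bm\eta)-M_2(\vr)$. Therefore, using $\mathcal G_{\bm\eta}(\vr)=L\le\mathcal F(\bm\eta)<L_0$,
\begin{align*}
 \mathcal F(\vr)&=L-\tfrac1{2\tau}\dw^2(\vr,\bm\eta)=L-\tfrac1{2\tau}M_2(\bm\eta)+\tfrac1{2\tau}M_2(\vr)\\
 &\le\mathcal F(\bm\eta)-\tfrac1{2\tau}M_2(\bm\eta)+\tfrac1{2\tau}M_2(\vr)<\inf_{\ga}\mathcal F+\tfrac1{2\tau}M_2(\vr),
\end{align*}
which is the asserted strict inequality and completes the proof.
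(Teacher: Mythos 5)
Your overall architecture is correct and essentially mirrors the paper's: prove the dichotomy ``(b) fails $\Rightarrow$ (a)'' by showing that an entropy-blowing minimizing sequence must concentrate at a single Dirac, pass to the limit with lower semicontinuity of $\dw$, and then read off the last assertion from Proposition~\ref{gnu proposition}(c) together with the discrete second-moment identity of Lemma~\ref{wdestimate}. Your final paragraph (the strict inequality for the minimizer) is, word for word, the paper's Step 7. You also make a small but genuine simplification: by writing $\dw^2(\bm\beta\delta_{x^*},\bm\eta)=M_2(\bm\eta)+|x^*|^2\sum_i\beta_i$, you bypass the paper's separate center-of-mass argument that the blow-up point is the origin.

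The genuine gap is exactly where you flag it: the ``no-splitting'' step. What you write is a plan, not a proof --- ``a concentration--compactness analysis should produce,'' ``one expects to conclude,'' with only the single-index sub-cluster treated as an example. Two of your intermediate assertions would also need care: (i) the claim that $\sup_m\mathcal H_+(\bm\rho^m)=+\infty$ forces failure of equi-integrability is not an application of de la Vall\'ee Poussin's criterion --- dVP says bounded entropy implies uniform integrability, not the converse, and a uniformly integrable family can have unbounded entropy if it only satisfies a slower superlinear bound; and (ii) the heuristic ``a spike of vanishing mass with bounded free energy has bounded entropy'' is plausible but unproved and is doing real work in reducing to a finite cluster decomposition. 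The paper's argument is of a rather different flavour and avoids Lions concentration--compactness altogether: if some component $\rho_n^*$ is not a Dirac, pick a continuity radius $r_1$ of the mass function $\alpha_n^*$, split the plane into an inner ball, an annulus, and an outer region carrying masses $a_1^m,a_2^m,a_3^m$, apply the annular logarithmic HLS inequality of \cite[Lemma 3.1]{BCM} to get \eqref{mul} with $\kappa_n^m=\max\{a_1^m+a_2^m,a_2^m+a_3^m\}<\beta_n-\delta$, then absorb \eqref{mul} into $\mathcal F(\bm\rho^m)\le C$ to produce a modified functional \eqref{new functional} with coefficients $\bm b$ and interaction $\tilde a_{ij}$ for which $\Lambda_J(\bm\beta;\bm b)\ge a_{nn}\delta\beta_n>0$ for all $J$ --- i.e.\ effective sub-criticality --- and invoke Proposition~\ref{propertyf}(d)/Remark~\ref{remark lhls} to bound the entropy. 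Ruling out different concentration points $v_i\neq v_j$ (the paper's Step 4) is done by the cross-interaction estimate \eqref{logbbd}, which exploits $\mathrm{dist}(v_i,v_j)>0$ to show the mixed logarithmic term is asymptotically negligible, after which the same effective-sub-criticality mechanism applied to the sub-block $I_1$ gives bounded entropy for the components concentrating at $v_1$, contradicting their being Diracs. Neither of these two concrete mechanisms appears in your sketch, and your polarization-identity route to ruling out multiple clusters and a diffuse remainder is not carried out. So while the skeleton and the easy steps are right, the core of the theorem is missing.
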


\begin{proof}
 The proof of the theorem is divided into several steps.
 
 \vspace{0.2 cm}
 
 \noindent
 {\bf Step 1:} Let $\{\bm{\rho}^m\}$ be a minimizing sequence for $\inf_{\bm{\rho} \in \ga} \G.$
By Proposition \ref{propertyf}(b) and \eqref{1}, the second moment of $\bm{\rho}^m: M_2(\bm{\rho}^m)$ remains uniformly bounded and hence the 
measures $\bm{\rho}^m$ are tight. By Prokhorov's theorem, $\bm{\rho}^m \overset{\ast}{\rightharpoonup} \bm{\rho}^*$ in the weak $*$ topology of measures. In the following we will prove that either all the components
of $\bm{\rho}^m$ concentrates in the form of a Dirac delta measure at a common point $v_0$ or, the entropy $\mathcal{H}(\bm{\rho}^m)$ remains uniformly bounded. In other words,
either $\rho_i^* = \beta_i \delta_{v_0}$ for every $i \in I,$ or, $\mathcal{H}(\bm{\rho}^m) \leq C$ for some constant independent of $m.$ 
The concentration property of $\bm{\rho}^m$ together with the lower semi-continuity of $\dw$ will give the conclusion (a). 

\vspace{0.2 cm}

\noindent
{\bf Step 2:} If at least one component does not concentrate then no component concentrate at all.

\vspace{0.2 cm}

With out loss of generality we can assume $\rho_n^{*}$ is not a Dirac mass and $0 \in supp (\rho_n^*)$. If $0 \not\in supp(\rho_n^*)$ we can work with 
any point within the support of $\rho_n^*$ and the same argument can be carried over. We define 
\begin{align*}
 \alpha_n^{*}(r) := \int_{B_r} d\rho_n^*. 
\end{align*}
Then $\lim_{r \rightarrow \infty} \alpha_n^*(r) = \beta_n$ and $\rho_n^*$ is not a Dirac mass is equivalent to saying $\lim_{r \rightarrow 0+}\alpha_n^*(r) < \beta_n.$ 
Since $\alpha_n^{*}$ is monotone, the points of discontinuities of $\alpha_n^*$ is at most countable. We can choose a point $r_1$ small enough such that 
$0 <\alpha_n^*(r_1) < \beta_n$ and $r_1$ is a point of continuity of $\alpha_n^*.$

We choose $\delta > 0$ small so that $0 < \alpha_n^*(r_1) - \delta < \alpha_n^*(r_1) + \delta < \beta_n.$ Later we will make further smallness assumption on $\delta.$
By continuity of $\alpha_n^*$ at $r_1$ and monotonicity, we can choose $r_0 < r_1 < r_2$ such that 
\begin{align*}
 0 < \alpha_n^*(r_1) - \delta < \alpha_n^*(r_0)\leq \alpha_n^*(r_1) \leq \alpha_n^*(r_2) < \alpha_n^*(r_1) + \delta < \beta_n.
\end{align*}
Since $\rho_n^m \overset{\ast}{\rightharpoonup} \rho_n^*,$ for large enough $m$ we have

\begin{align*}
 \alpha_n^*(r_1) - \delta \leq \int_{B_{r_0}} &\rho_n^m(x) \ dx, \ \beta_n - \alpha_n^*(r_1) - \delta \leq \int_{B_{r_2}^c} \rho_n^m(x) \ dx,\\
 &\int_{B_{r_2} \backslash B_{r_0}} \rho_n^m(x) \ dx \leq 2\delta.
\end{align*}
Applying logarithmic HLS inequality in three regions $B_{r_0 + \epsilon}, B_{r_2 - \epsilon}^c$ and $B_{r_2} \backslash B_{r_0}$ and proceeding as in \cite[Lemma $3.1$]{BCM} we conclude that 
\begin{align}\label{mul}
 \kappa_n^m\inte \rho_n^{m}(x) \ln \rho_n^{m}(x) + 2 \inte \inte \rho_n^m(x) \ln|x-y| \rho_n^m(y) \ dx dy \geq C,
\end{align}
where $\kappa_n^m = max \{a^m_1 + a_2^m, a_2^m + a_3^m\}$, $\epsilon = \frac{1}{3}(r_2-r_0$) and
\begin{align*}
 a_1^m = \int_{B_{r_0 + \epsilon}} \rho_n^{m}(x) \ dx, \ a_2^m = \int_{B_{r_2} \backslash B_{r_0}} \rho_n^{m}(x) \ dx, \ 
 a_3^m = \int_{B_{r_2 - \epsilon}^c} \rho_n^{m}(x) \ dx.
\end{align*}
By choosing $\delta < \frac{1}{6} \min\{\alpha_n^*(r_1), \beta_n - \alpha_n^*(r_1)\}$ we see that $\kappa_n^m < \beta_n - \delta$ for sufficiently large $m.$

\vspace{0.2 cm}

\noindent
{\bf Step 3:} Uniform boundedness on the entropy.

\vspace{0.2 cm}

Since $\bm{\rho}^m$ is a minimizing sequence, for sufficiently large $m$
\begin{align}\label{rewrite}
 \si \inte \rho_i^m\ln \rho_i^m \ dx + \si \sj \frac{a_{ij}}{4\pi} \inte \inte \rho_i^m(x) \ln |x-y| \rho_j^m(y) \ dxdy
 \leq \mathcal{F}(\et) + 1.
\end{align}

Multiplying \eqref{mul} by $\frac{a_{nn}}{8\pi}$ and subtracting from \eqref{rewrite} we get 
\begin{align}\label{deduce100}
 \si b_i \inte \rho_i^m\ln \rho_i^m \ dx + \si \sj \frac{\tilde a_{ij}}{4\pi} \inte \inte \rho_i^m(x) \ln |x-y| \rho_j^m(y) \ dxdy \leq C,
\end{align}
where 

\begin{align*}
 b_i = 
 & \begin{cases}
  1, \ \ \ \ \ \ \ \ \ \ \ \ \ \ \ \ if \ i \neq n,\\
  (1- \frac{a_{nn}\kappa_n^m}{8\pi}), \ \ if \ i = n,
 \end{cases}
& & 
\tilde a_{ij} =
  \begin{cases}
  0, \ \ \ \ if \ i = j = n, \\
  a_{ij}, \ \ otherwise.
 \end{cases}
\end{align*}
 Now we claim that the mass $\bm{\beta}$ is sub-critical with respect to $\bm{b}$ and the interaction matrix $(\tilde a_{ij}),$ i.e., 
 $\Lambda_J(\bm{\beta}; \bm{b}) \geq \bar\varepsilon > 0$ for all $J \subset I$ and for some $\bar \varepsilon$ (see \eqref{lambda b beta}). Let $J \subset I$ be any subset. If $n  \notin J$ then 
 there is nothing to prove, because, in that case $\Lambda_J(\bm{\beta}; \bm{b}) = \Lambda_J(\bm{\beta})>0,$
 according to our assumption. If $n \in J$ then 
 \begin{align} \label{provingsub1}
  \Lambda_J(\bm{\beta};\bm{b}) &= 8\pi \sum_{i \in J} b_i\beta_i - \sum_{i \in J} \sum_{j \in J}\tilde a_{ij} \beta_i \beta_j \notag \\
  &= \Lambda_{J\backslash \{n\}}(\bm{\beta}) - 2 \sum_{i \in J\backslash \{n\}} a_{in} \beta_i \beta_n + 8\pi b_n \beta_n.
 \end{align}
By our assumption $\Lambda_J(\bm{\beta}) > 0$ if $J \neq I$ and equal to zero if $J = I.$ In either case the condition $\Lambda_J(\bm{\beta}) \geq 0$
can be rewritten as 
\begin{align}\label{provingsub2}
 \Lambda_{J\backslash \{n\}}(\bm{\beta}) - 2 \sum_{i \in J\backslash \{n\}} a_{in} \beta_i \beta_n \geq a_{nn}\beta_n^2 - 8\pi \beta_n.
\end{align}
Using \eqref{provingsub2} in \eqref{provingsub1} we get 
\begin{align*}
 \Lambda_J(\bm{\beta}, \bm{b}) &\geq a_{nn}\beta_n^2 - 8\pi \beta_n + 8\pi\left(1- \frac{a_{nn}\kappa_n^m}{8\pi}\right)\beta_n \\
 &= a_{nn}\beta_n(\beta_n - \kappa_n^m)\geq a_{nn}\delta \beta_n > 0
\end{align*}
provided $a_{nn} > 0$ which is valid owing to our assumption. According to Proposition \ref{propertyf}(d) and Remark \ref{remark lhls} and the uniform 
second moment bound we deduce from \eqref{deduce100} that the entropy $\mathcal{H}(\bm{\rho}^m)$ is uniformly bounded.

At this stage we know that if at least one component does not concentrate then we have uniform bound on the entropy. As a consequence 
of Proposition \ref{gnu proposition}(c), the existence of a minimizer follows. 

Next we will deal with the case when concentration does happen. Assume $\rho_i^* = \beta_i \delta_{v_i}$  for some points $v_i \in \rt.$

\vspace{0.2 cm} 

\noindent
{\bf Step 4:} $v_1 = \cdots = v_n = 0.$

\vspace{0.2cm}

Let $I_1 \subset I$ be the set of all indices such that $v_i = v_1$ for all $i \in I_1.$ It is enough to show that $I_1 = I.$ We set $I_2 = I \backslash I_1.$ 
Let us take $i \in I_1$ and $j \in I_2.$ We claim that for any $\iota >0$ small, we can estimate  
\begin{align} \label{logbbd}
 \inte \inte \rho_i^m(x) \ln |x-y| \rho_j^m(y) \ dxdy \geq -\alpha_j^m \inte \rho_i^m \ln \rho_j^m - \alpha_i^m \inte 
 \rho_j^m \ln \rho_j^m - C,
\end{align}
for $m$ large enough and where $\max \{\alpha_i^m, \alpha_j^m \}< \iota$. To do so, we set $\epsilon = \min\{\frac{1}{10}|v_i - v_j|, 1\}$ and
decompose the integral $\inte \inte \rho_i^m(x) \ln |x-y| \rho_j^m(y) \ dxdy$
into three pieces $A, B$ and C: where
\begin{align*}
 \displaystyle{ A := \int \int_{\{|x-y| < \epsilon\}} \cdot \ , \ B := \int \int_{\{\epsilon \leq |x-y| < R\}} \cdot \ , \ C := \int \int_{\{|x-y|>R\}}\cdot}
\end{align*}
and $R > 1$ is large. The integral $B$ is  easy to estimate by the $L^1$ bound of $\rho_i^m, \rho_j^m$.  The integral $C$ is estimated by the bound on $M_2(\bm{\rho^m})$.

To estimate the integral $A$ we see that the region $\{|x - y| < \epsilon\}$ is contained in the union of $\{|x-y|< \epsilon\} \cap \{x \in B_{\epsilon}(v_i)^c\}$
and $\{|x - y| < \epsilon\} \cap \{y \in B_{\epsilon}(v_j)^c\}.$ Indeed, if $x \in B_{\epsilon} (v_i)$ and $y \in B_{\epsilon}(v_j)$ then 
$|x - y| \geq |v_i - v_j| - |x - v_i| - |x - v_j| \geq 8\epsilon.$ We only estimate the region $\{|x - y| < \epsilon\} \cap \{y \in B_{\epsilon}(v_j)^c\},$ 
the other being verbatim copy with the role of $i$ and $j$ interchanged.
Choose $\alpha < 1$, then using the inequality $ab \leq b\ln b + e^a,$ we deduce 
\begin{align*}
 &\left|\int \int_{\{|x - y| < \epsilon\} \cap \{y \in B_{\epsilon}(v_j)^c\}} \rho_i^m(x) \ln |x-y| \rho_j^m(y) \ dxdy \right| \\
 = & \ \int \int_{\{|x - y| < \epsilon\} \cap \{y \in B_{\epsilon}(v_j)^c\}} (\alpha^{-1}\rho_i^m(x)) \left(\alpha\ln \frac{1}{|x-y|}\right) \rho_j^m(y) \ dxdy\\
 \leq & \int \int_{\{|x - y| < \epsilon\} \cap \{y \in B_{\epsilon}(v_j)^c\}} \rho_j^m(y) \left[\alpha^{-1}\rho_i^m(x)\ln \rho_i^m(x) 
 + \frac{1}{|x -y|^{\alpha}} \right]\ dxdy \\
 \leq & \ \alpha^{-1}\left(\int_{B_{\epsilon}(v_j)^c} \rho_j^m(x) \ dx\right) \inte \rho_i^m(x) |\ln \rho_i^m(x)| \ dx + C \\
 \leq & \ \alpha^{-1}\left(\int_{B_{\epsilon}(v_j)^c} \rho_j^m(x) \ dx\right) \inte \rho_i^m(x) \ln \rho_i^m(x) + C,
\end{align*}
where in the last line we have used the inequality  \cite[Lemma $2.6$]{BD}:
\begin{align*}
 \inte \rho(x)|\ln \rho(x)| \ dx \leq \inte \rho(x)\ln \rho(x) + M_2(\rho) + 2\ln (2\pi)\inte \rho(x) \ dx + \frac{2}{e}.
\end{align*}
Taking into account the integrals $A,B$ and $C$ we conclude that \eqref{logbbd} holds with 
$\alpha_l^m=\alpha^{-1}\int_{B_{\epsilon}(v_l)^c} \rho_l^m(x)\ dx. $ Since $\rho_l^m \overset{\ast}{\rightharpoonup} \beta_l \delta_{v_l}$, for lagre enough $m$
we can make $\alpha_l^m < \iota.$

We can rewrite the upper bound condition $\f(\bm{\rho}^m) \leq C$ in the following way:
\begin{align}\label{fsplit}
 &\sum_{i \in I_1}\inte \rho_i^m \ln \rho_i^m \ dx + \sum_{i \in I_1}\sum_{j \in I_1}\frac{a_{ij}}{4\pi}\inte \inte \rho_i^m(x) \ln |x-y| \rho_j^m(y) \ dxdy \notag\\
  &+\sum_{i \in I_2} \inte\rho_i^m \ln \rho_i^m \ dx + \sum_{i \in I_2}\sum_{j \in I_2}\frac{a_{ij}}{4\pi}\inte \inte \rho_i^m(x) \ln |x-y| \rho_j^m(y) \ dxdy \notag \\
  &+ \sum_{i \in I_1}\sum_{j \in I_2}\frac{a_{ij}}{2\pi}\inte \inte \rho_i^m(x) \ln |x-y| \rho_j^m(y) \ dxdy \leq C.
\end{align}

Using \eqref{logbbd} in \eqref{fsplit} we conclude that
\begin{align*}
&\left[\sum_{i \in I_1} (1- b_i^m) \inte\rho_i^m \ln \rho_i^m \ dx + \sum_{i \in I_1}\sum_{j \in I_1}\frac{a_{ij}}{4\pi}\inte \inte \rho_i^m(x) \ln |x-y| \rho_j^m(y) \ dxdy \right]\notag\\
  &+ \left[\sum_{i \in I_2} (1- b_i^m)\inte\rho_i^m \ln \rho_i^m \ dx + \sum_{i \in I_2}\sum_{j \in I_2}\frac{a_{ij}}{4\pi}\inte \inte \rho_i^m(x) \ln |x-y| \rho_j^m(y) \ dxdy \right]\leq C. 
\end{align*}
where $b_i^m = \sum_{j \in I_2} \frac{a_{ij} \alpha_j^m}{2\pi}$ if $i \in I_1$ and when $i \in I_2, \ b_i^m = \sum_{j \in I_1} \frac{a_{ij} \alpha_j^m}{2\pi}.$  
At least one of the quantities within the brackets must be bounded above. We assume
\begin{align*}
\sum_{i \in I_1}(1- b_i^m) \inte\rho_i^m \ln \rho_i^m \ dx + \sum_{i \in I_1}\sum_{j \in I_1}\frac{a_{ij}}{4\pi}\inte \inte \rho_i^m(x) \ln |x-y| \rho_j^m(y) \ dxdy \leq C.
\end{align*}
Since $\Lambda_{J}(\bm{\beta}) > 0,$ for all $J \subset I_1$ and $b_i^m$ can be made as small as one desires, by Remark \ref{remark lhls}
we obtain uniform bound on the entropy of $\rho_i^m$ for all $i \in I_1.$ This contradicts that $\bm{\rho}*$ is a Dirac measure. Hence $I = I_1$ and we denote the 
common blow-up point by $v_0.$ By second moment bound we conclude
\begin{align*}
 (\si \beta_i)v_0 = \lim_{m \rightarrow \infty}\si \inte x \rho_i^m(x) \ dx  = 0.
\end{align*}

\vspace{0.2 cm}

\noindent
{\bf Step 5:} Concentration implies equality holds in the functional inequality. 

\vspace{0.2 cm}

Since $\rho_i^m \overset{\ast}{\rightharpoonup} \beta_i \delta_0$ for all $i \in I,$ by lower semi-continuity Lemma \ref{semicontinuity of dw} we conclude that 
\begin{align*}
 \inf_{\bm{\rho} \in \ga} \G &= \lim_{m \rightarrow \infty} \left (\f(\bm{\rho}^m) + \frac{1}{2\tau} \dw^2 (\bm{\rho}^m, \et)\right)\\
 &\geq \inf_{\bm{\rho} \in \ga} \F + \frac{1}{2\tau} \liminf_{m \rightarrow \infty} \dw^2 (\bm{\rho}^m, \et)\\
 &\geq \inf_{\bm{\rho} \in \ga} \F + \frac{1}{2\tau} \dw^2(\bm{\beta}\delta_{\bm{0}}, \et) \\
 & = \inf_{\bm{\rho} \in \ga} \F + \frac{1}{2\tau}M_2(\et).
\end{align*}
Using Lemma \ref{bound1} we obtain the equality in the functional inequality.

\vspace{0.2 cm}

\noindent
{\bf Step 6:} If equality in \eqref{fi} holds, then there exists a blowing up minimizing sequence.

\vspace{0.2 cm}

Let $\{\bm{\rho}^m\} \subset \ga$ be a minimizing sequence for $\f$ and $\bm{\tilde \rho}^m$ be defined as in the proof of Lemma \ref{bound1}. It is easy to see that $\mathcal{H}(\bm{\tilde \rho}^m) \rightarrow \infty$ as $m \rightarrow \infty.$ Then 
\begin{align*}
  \inf_{\bm{\rho} \in \ga} \F + \frac{1}{2\tau}M_2(\bm{\eta})= \inf_{\bm{\rho} \in \ga} \G \leq \mathcal{G}_{\et}(\bm{\tilde \rho}^m)
  = \inf_{\bm{\rho} \in \ga} \F + \frac{1}{2\tau}M_2(\bm{\eta}) + o(1),
\end{align*}
as $m \rightarrow \infty.$ Hence,  $\bm{\tilde \rho}^m$ is a concentrating minimizing sequence.

\vspace{0.2 cm}

\noindent
{\bf Step 7:} Concluding the proof of the theorem.

\vspace{0.2 cm}

In view of the first part of the theorem, if $\et$ satisfies $\f(\et) < \inf_{\bm{\rho} \in \ga} \F + \frac{1}{2\tau} M_2(\et)$ then there exists a minimizer of 
$\inf_{\bm{\rho} \in \ga} \G,$ and let us denote the minimizer by $\vr.$ Utilizing Lemma \ref{wdestimate}, the obtained minimizer satisfies
\begin{align*}
 \f(\vr) = \mathcal{G}_{\bm{\eta}}(\vr) - \frac{1}{2\tau}\dw^2(\vr, \et) &\leq \f(\et) - \frac{1}{2\tau}\dw^2(\vr, \et)\\
 & < \inf_{\bm{\rho} \in \ga} \F + \frac{1}{2\tau} M_2(\et) - \frac{1}{2\tau} \Big(M_2(\et) - M_2(\vr)\Big) \\
 &= \inf_{\bm{\rho} \in \ga} \F + \frac{1}{2\tau}M_2(\vr).
\end{align*}
This completes the proof of the theorem.
\end{proof}

\begin{Rem} \label{mms existence remark}
 A careful tracing back of the proof of Theorem \ref{mms existance thm}, in particular, step $2$ to step $4$ confirms that:
 suppose we have a sequence $\{\bm{\rho}^m\} \subset \ga$ such that the energy $\f(\bm{\rho}^m)$ and the second moment $M_2(\bm{\rho}^m)$ are uniformly bounded above then one of the 
 following alternative holds
 \begin{itemize}
  \item either the entropy is uniformly bounded, or
  \item all the components of $\bm{\rho}^m$ concentrates at the origin in the form of a Dirac delta measure.
 \end{itemize}
This observation will be useful in the next section while trying to obtain uniform entropy estimates.
\end{Rem}

\section{A priori Estimates} \label{section apriori}
In this section we will obtain uniform estimates on the interpolates $\bm{\rho}_{\tau}^k, k \in \mathbb{N}$ obtained by the MM-scheme \eqref{mms}.
Recall that if $\tau \in (0 ,\tau^*)$ then MM-scheme is well defined.

\subsection{Uniform bound on the second moment and the Wasserstein distance}
\begin{Lem} \label{apriori}
 For any $\tau \in (0, \tau^*)$ and any positive integer $k$ there holds 
 \begin{align} \label{ap1}
  M_2(\bm{\rho}_{\tau}^k) + \frac{1}{2\tau}\sum_{l=1}^{k} \dw^2(\bm{\rho}_{\tau}^{l}, \bm{\rho}_{\tau}^{l-1})
 \leq \f(\bm{\rho}^0) - C_{\mbox{{\tiny{LHLS}}}}(\bm{\beta}) + M_2(\bm{\rho}^0).
 \end{align}
\end{Lem}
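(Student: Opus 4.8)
The plan is to combine the elementary telescoping of the minimizing-movement inequality with two facts already at hand: the sharp lower bound of $\f$ at critical mass (Proposition \ref{propertyf}(a), Remark \ref{remark lhls}) and the discrete second moment identity of Lemma \ref{wdestimate}.

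First I would exploit the defining minimality of $\bm{\rho}_\tau^l$ in \eqref{mms}: testing the functional $\bm{\rho}\mapsto \f(\bm{\rho})+\frac{1}{2\tau}\dw^2(\bm{\rho},\bm{\rho}_\tau^{l-1})$ against the admissible competitor $\bm{\rho}=\bm{\rho}_\tau^{l-1}\in\ga$ itself gives, for each $l=1,\ldots,k$,
\begin{align*}
 \f(\bm{\rho}_\tau^l)+\frac{1}{2\tau}\dw^2(\bm{\rho}_\tau^l,\bm{\rho}_\tau^{l-1})\leq \f(\bm{\rho}_\tau^{l-1}).
\end{align*}
Summing over $l$ and telescoping (using $\bm{\rho}_\tau^0=\bm{\rho}^0$) produces
\begin{align*}
 \f(\bm{\rho}_\tau^k)+\frac{1}{2\tau}\sum_{l=1}^{k}\dw^2(\bm{\rho}_\tau^l,\bm{\rho}_\tau^{l-1})\leq \f(\bm{\rho}^0).
\end{align*}
Since $\bm{\beta}$ is critical it satisfies \eqref{beta}, so $\f$ is bounded below on $\ga$ by $C_{\mbox{{\tiny{LHLS}}}}(\bm{\beta})$; in particular $\f(\bm{\rho}_\tau^k)\geq C_{\mbox{{\tiny{LHLS}}}}(\bm{\beta})$, and the previous line becomes
\begin{align*}
 \frac{1}{2\tau}\sum_{l=1}^{k}\dw^2(\bm{\rho}_\tau^l,\bm{\rho}_\tau^{l-1})\leq \f(\bm{\rho}^0)-C_{\mbox{{\tiny{LHLS}}}}(\bm{\beta}).
\end{align*}

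Next, each $\bm{\rho}_\tau^l$ is a minimizer of $\mathcal{G}_{\bm{\eta}}$ with $\bm{\eta}=\bm{\rho}_\tau^{l-1}$, so Lemma \ref{wdestimate} (applicable precisely because $\bm{\beta}$ is critical) yields $\dw^2(\bm{\rho}_\tau^l,\bm{\rho}_\tau^{l-1})=M_2(\bm{\rho}_\tau^{l-1})-M_2(\bm{\rho}_\tau^l)$. Summing and telescoping gives
\begin{align*}
 M_2(\bm{\rho}_\tau^k)=M_2(\bm{\rho}^0)-\sum_{l=1}^{k}\dw^2(\bm{\rho}_\tau^l,\bm{\rho}_\tau^{l-1})\leq M_2(\bm{\rho}^0).
\end{align*}
Adding this to the displayed bound on $\frac{1}{2\tau}\sum_l\dw^2(\bm{\rho}_\tau^l,\bm{\rho}_\tau^{l-1})$ gives exactly \eqref{ap1}.

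I do not expect a real obstacle here: the estimate is routine once the MM-scheme is known to be well defined (Section \ref{section critical}) and the discrete second moment identity is in place. The only point requiring care is that criticality of $\bm{\beta}$ enters twice — once to guarantee that $\f$ has a finite lower bound $C_{\mbox{{\tiny{LHLS}}}}(\bm{\beta})$ on $\ga$, and once to ensure the hypothesis of Lemma \ref{wdestimate} holds at each step of the scheme.
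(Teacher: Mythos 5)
Your proof is correct and follows essentially the same line as the paper: the telescoping of the minimizing-movement inequality, the lower bound $\f\geq C_{\mbox{\tiny{LHLS}}}(\bm{\beta})$ from criticality, and the monotonicity of $M_2$ furnished by Lemma \ref{wdestimate}. The only cosmetic difference is that you telescope the second-moment identity explicitly, whereas the paper just records the resulting chain $M_2(\bm{\rho}_\tau^k)\leq\cdots\leq M_2(\bm{\rho}^0)$.
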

\begin{proof}
 For every $l \in \{1,\ldots,k\},$ the minimizing property of $\rl$ gives
 \begin{align*}
  \f(\rl) + \frac{1}{2\tau}\dw^2(\rl, \bm{\rho}^{l-1}_{\tau}) \leq \f(\bm{\rho}^{l-1}_{\tau}). 
 \end{align*}
Summing over $l \in \{1,\ldots,k\}$ we obtain
\begin{align} \label{ap2}
 \f(\rk) + \frac{1}{2\tau}\sum_{l=1}^{k} \dw^2(\bm{\rho}_{\tau}^{l}, \bm{\rho}_{\tau}^{l-1}) \leq \f(\bm{\rho}^0).
\end{align}
Since $\bm{\beta}$ is critical, $\f(\rk) \geq C_{\mbox{{\tiny{LHLS}}}}(\bm{\beta}),$ and hence 
\begin{align} \label{ap4}
 \frac{1}{2\tau}\sum_{l=1}^{k} \dw^2(\bm{\rho}_{\tau}^{l}, \bm{\rho}_{\tau}^{l-1}) \leq \f(\bm{\rho}^0) - C_{\mbox{{\tiny{LHLS}}}}(\bm{\beta}).
\end{align}
On the other hand, by Lemma \ref{wdestimate}
\begin{align*}
 M_2(\rk) \leq M_2(\rka) \leq \cdots \leq M_2(\bm{\rho}_{\tau}^0) = M_2(\bm{\rho}^0),
\end{align*}
completing the proof.
\end{proof}


\subsection{Uniform bound on the entropy}

To obtain uniform bound on the entropy, we are going to use a refined version of de la Vell\'{e}e Poussin's lemma on the gain on integrability. Let us first recall the lemma
\begin{Lem}[de la Vell\'{e}e Poussin] \label{de la lemma}
 Let $\mu$ be a non-negative measure on $\rt$ and $\mathcal{Z} \subset L^1(\rt;\mu).$ The set $\mathcal{Z}$ is uniformly integrable in $L^1(\rt;\mu)$ if and only
if $\mathcal{Z}$ is uniformly bounded in $L^1(\rt;\mu)$ and there exists a convex function $\vu \in C^{\infty}([0,\infty); [0,\infty))$ satisfying all the properties listed 
in \eqref{p} and 
\begin{align*}
 \sup_{g \in \mathcal{Z}} \inte \vu(g(x)) \ d\mu(x) < +\infty. 
\end{align*}
\end{Lem}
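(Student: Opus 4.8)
The plan is to prove the two implications separately; the converse (sufficiency of the criterion) is a short direct estimate, while the forward direction is the substantial one. For the converse, suppose $\mathcal{Z}$ is bounded in $L^1(\rt;\mu)$ and $\sup_{g\in\mathcal{Z}}\inte\vu(g)\,d\mu=:M<\infty$ for some $\vu$ obeying \eqref{p}. Since $\vu(r)/r\to\infty$, for each $\varepsilon>0$ there is a level $K$ with $r\le(\varepsilon/M)\vu(r)$ for $r\ge K$; hence $\int_{\{g>K\}}g\,d\mu\le(\varepsilon/M)\inte\vu(g)\,d\mu\le\varepsilon$ for every $g\in\mathcal{Z}$ (the $g$ are nonnegative densities in our application; otherwise one replaces $g$ by $|g|$ and $\vu(g)$ by $\vu(|g|)$), which is exactly uniform integrability.

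For the forward direction, assume $\mathcal{Z}$ is uniformly integrable and bounded in $L^1(\mu)$ by $C$, and set $\theta(K):=\sup_{g\in\mathcal{Z}}\int_{\{|g|>K\}}|g|\,d\mu$, so $\theta$ is non-increasing and $\theta(K)\to 0$. I would first choose $0=K_0<K_1<K_2<\cdots\to\infty$ recursively so that $\theta(K_n)\le 2^{-n}$ and the increments $K_{n+1}-K_n$ are non-decreasing in $n$ (possible precisely because $\theta$ decays, so the $K_n$ can be spread out as much as needed). Let $\psi$ be the continuous piecewise-linear function with $\psi(0)=0$, $\psi(K_n)=n$, linear on each $[K_n,K_{n+1}]$; the non-decreasing increments make the slopes $(K_{n+1}-K_n)^{-1}$ non-increasing, so $\psi$ is concave and increasing, and $\psi$ agrees with $r\mapsto r/K_1$ on $[0,K_1]$. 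Extending $\psi$ past $0$ by the same line and convolving with a symmetric mollifier of width $\delta<K_1$ (convolution preserves concavity and monotonicity) produces $\varphi\in C^\infty([0,\infty);[0,\infty))$ that is concave, increasing, coincides with $r/K_1$ near $r=0$ (so $\varphi(0)=0$), tends to $\infty$, and has $0<\varphi(K_{n+1})-\varphi(K_n)\le 2$. I would then set $\vu(r):=\int_0^r\varphi(s)\,ds$.

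It remains to verify \eqref{p} for this $\vu$ and the uniform bound on $\inte\vu(g)\,d\mu$. The structural conditions are immediate from the construction: $\vu\in C^\infty$ is convex with $\vu(0)=\vu'(0)=\varphi(0)=0$; $\vu'=\varphi$ is concave; $r^{-1}\vu(r)=\int_0^1\varphi(rt)\,dt$ has second derivative $\int_0^1 t^2\varphi''(rt)\,dt\le 0$ and is therefore concave; monotonicity of $\varphi$ gives $\vu(r)\le r\varphi(r)=r\vu'(r)$, while concavity of $\varphi$ with $\varphi(0)=0$ gives $\varphi(s)\ge(s/r)\varphi(r)$ on $[0,r]$, hence $\vu(r)\ge\tfrac12 r\varphi(r)$, that is $r\vu'(r)\le 2\vu(r)$; and $\vu(r)/r\ge\tfrac12\varphi(r)\to\infty$ while $\vu'(r)=\varphi(r)\to\infty$. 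For the integral bound, Fubini (valid since $\varphi(0)=0$ and $\varphi'\ge0$) gives $\vu(g)=\int_0^\infty\varphi'(t)(g-t)_+\,dt$, and with $(g-t)_+\le|g|\mathbf 1_{\{|g|>t\}}$ one splits the $t$-integral over $[0,K_1]$ and the intervals $[K_n,K_{n+1}]$:
\begin{align*}
 \inte\vu(g)\,d\mu\le\int_0^\infty\varphi'(t)\Big(\int_{\{|g|>t\}}|g|\,d\mu\Big)dt\le C\,\varphi(K_1)+\sum_{n\ge1}\big(\varphi(K_{n+1})-\varphi(K_n)\big)\theta(K_n),
\end{align*}
using $\int_{\{|g|>t\}}|g|\,d\mu\le\|g\|_{L^1(\mu)}\le C$ with $\int_0^{K_1}\varphi'=\varphi(K_1)$ on $[0,K_1]$, and $\int_{\{|g|>t\}}|g|\,d\mu\le\theta(K_n)\le 2^{-n}$ with $\int_{K_n}^{K_{n+1}}\varphi'=\varphi(K_{n+1})-\varphi(K_n)\le 2$ on $[K_n,K_{n+1}]$; the right-hand side is at most $C\,\varphi(K_1)+2\sum_{n\ge1}2^{-n}<\infty$, independently of $g\in\mathcal{Z}$.

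The main obstacle is the construction of $\varphi$ itself. The extra clauses in \eqref{p} — smoothness, concavity of $\vu'$, concavity of $r\mapsto r^{-1}\vu(r)$, and the doubling relation $\vu\le r\vu'\le 2\vu$ — force $\vu'=\varphi$ to be \emph{concave}, which rules out the classical de la Vallée Poussin choice of a convex piecewise-linear $\vu$ (whose derivative is a step function). One must instead build a concave $\varphi$ whose growth is slow enough to be $\mu$-integrable against every $g\in\mathcal{Z}$, yet fast enough to remain superlinear, and then carry out the mollification near the corners — in particular at $r=0$, where the normalizations $\varphi(0)=0$ and $\vu'(0)=0$ must be preserved — without destroying concavity or monotonicity.
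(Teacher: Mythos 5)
The paper itself does not prove this lemma: immediately after stating it, the authors refer to \cite{Poussin, Poussin2, dela, LMdela, Ldela} for the classical statement and for the extra regularity and concavity properties imposed on $\vu$ in \eqref{p}. So there is no in-paper argument to compare against; what you have written is a self-contained proof that the paper delegates to the literature.

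That said, your proof is correct, and it is essentially the argument found in those references adapted to the specific package of conditions \eqref{p}. The converse (sufficiency) is the standard Markov-type estimate and is fine. The substantive forward direction hits the right obstruction: the classical de la Vall\'{e}e Poussin construction builds a convex piecewise-linear $\vu$, whose derivative is a step function, which is useless here because \eqref{p} demands a \emph{concave} $\vu'$. Your fix — build $\varphi = \vu'$ directly as the concave piecewise-linear function with $\varphi(K_n)=n$ and non-decreasing gaps $K_{n+1}-K_n$, mollify, and integrate — is exactly the right move. The verifications are all sound: extending $\psi$ linearly past the origin and mollifying with a symmetric kernel of width $\delta<K_1$ leaves $\varphi$ equal to $r/K_1$ on $[0,K_1-\delta]$, so $\varphi(0)=0$ and hence $\vu(0)=\vu'(0)=0$; concavity of $\varphi$ gives $\vu'$ concave and, via $\varphi(s)\ge (s/r)\varphi(r)$, the lower half of the doubling relation $r\vu'\le 2\vu$; the change of variables $r^{-1}\vu(r)=\int_0^1\varphi(rt)\,dt$ gives concavity of $r^{-1}\vu$; and the non-decreasing gaps together with $\delta<K_1\le K_{n+1}-K_n$ keep the mollified increments $\varphi(K_{n+1})-\varphi(K_n)\le 2$, which makes the final sum $\sum_n(\varphi(K_{n+1})-\varphi(K_n))\theta(K_n)$ converge. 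One small remark: in the forward direction you assume both uniform integrability and $L^1$-boundedness; that is consistent with the statement (boundedness appears on both sides of the equivalence), but strictly speaking on an infinite measure space the tail condition alone does not force $L^1$-boundedness, so this hypothesis is needed, not derivable. In the paper's application $\mu$ is a finite measure, so this is a non-issue, and your formulation is exactly what is used.
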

Such gain of integrability result was first
implemented by de la Vall\'{e}e Poussin \cite{Poussin} in the context of $L^1$-weak compactness of a 
family of functions, but with out the regularity and concavity of $\vu^{\prime}.$
We refer to \cite{Poussin2, dela, LMdela, Ldela} concerning the regularity and the other mentioned properties of $\vu$.

We are going to apply de la Vell\'{e}e Poussin lemma for a suitably constructed measure $\mu$ and to the family $\mathcal{Z}$ containing only one element $g(x) = |x|^2.$ 
We remark that it is, in general, impossible to  iterate  the higher integrability information on the initial data to the sequence  $\rk$ obtained by the MM-scheme. In particular,  if 
we know $\si \vu(|\cdot|^2)\rho_{\tau,i}^0(\cdot) \in L^1(\rt),$ then it is not evident  that $\si \vu(|\cdot|^2)\rho_{\tau,i}^1(\cdot)$ will also be uniformly integrable.

\begin{Lem} \label{apriori entropy}
  For every $T \in (0,\infty)$ there exists a constant $\mathcal{C}_{ap}(T)$ 
 such that for each $\tau \in (0,\tau^*)$ and positive integers $k$ satisfying $k\tau \leq T$ there holds 
 \begin{align*} 
  \si \inte \rho_{\tau,i}^k|\ln \rho_{\tau,i}^k| \ dx\leq \mathcal{C}_{ap}(T).
 \end{align*}
\end{Lem}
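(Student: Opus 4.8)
The plan is to run an induction on $k$ (for $k\tau \le T$) that propagates a bound on $\si \inte \vu(|x|^2)\rho_{\tau,i}^k \, dx$, and then extract the entropy bound from this moment control together with Remark \ref{mms existence remark}. First I would fix the initial datum $\bm{\rho}^0 \in \ga$: since $M_2(\bm{\rho}^0)<\infty$, the family $\{|x|^2\}$ is trivially uniformly integrable with respect to $\rho_{\tau,i}^0$, so by Lemma \ref{de la lemma} there is a convex $\vu$ satisfying all of \eqref{p} with $\si \inte \vu(|x|^2)\rho_{\tau,i}^0 \, dx =: \Theta_0 < \infty$. This $\vu$ is fixed once and for all.

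Next I would apply Lemma \ref{better integrability} iteratively. That lemma gives a $\tau_0>0$ such that for $\tau \in (0,\tau_0)$, whenever $\vr$ minimizes $\mathcal{G}_{\bm{\eta}}$ with $\si\vu(|x|^2)\eta_i \in L^1$ (and the minimizer has the same integrability), one has
\begin{align*}
 \si \inte \vu(|x|^2)\vri \, dx \le (1 + C_0\tau)\si \inte \vu(|x|^2)\eta_i \, dx + C_0\tau .
\end{align*}
Shrinking $\tau^*$ if necessary so that $\tau^* < \tau_0$, and applying this with $\bm{\eta} = \bm{\rho}_{\tau}^{l-1}$, $\vr = \bm{\rho}_{\tau}^{l}$ for $l=1,\dots,k$, a discrete Gr\"onwall argument yields
\begin{align*}
 \si \inte \vu(|x|^2)\rho_{\tau,i}^k \, dx \le e^{C_0 k\tau}\big(\Theta_0 + 1\big) \le e^{C_0 T}\big(\Theta_0 + 1\big) =: \Theta_1(T),
\end{align*}
a bound uniform in $\tau \in (0,\tau^*)$ and in $k$ with $k\tau \le T$. (One subtlety: to invoke Lemma \ref{better integrability} at step $l$ I need to know a priori that $\si\vu(|x|^2)\rho_{\tau,i}^{l}\in L^1$; this follows because $\rho_{\tau,i}^l \in W^{1,1}$ by Lemma \ref{regularity} — using the already-established uniform second moment bound Lemma \ref{apriori} to get \eqref{theta} — and the minimizing property forces finiteness, or else one argues by the standard truncation/approximation device used in the proof of Lemma \ref{better integrability} itself.)

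Once the uniform $\vu$-moment bound $\Theta_1(T)$ is in hand, I would conclude as follows. Since $\vu$ is convex and superlinear at infinity, $\{|x|^2\}$ is uniformly integrable with respect to the family $\{\rho_{\tau,i}^k : k\tau \le T, \ \tau<\tau^*\}$, so in particular $M_2(\bm{\rho}_{\tau}^k)$ is uniformly bounded — which we already knew from Lemma \ref{apriori}, but now with the extra uniform-integrability structure. Now by Lemma \ref{apriori}, $\f(\bm{\rho}_{\tau}^k) \le \f(\bm{\rho}^0)$ and $M_2(\bm{\rho}_{\tau}^k) \le M_2(\bm{\rho}^0)$ are uniformly bounded, so Remark \ref{mms existence remark} applies: either the entropy $\mathcal{H}(\bm{\rho}_{\tau}^k)$ is uniformly bounded, or $\bm{\rho}_{\tau}^k$ concentrates at the origin as a Dirac mass. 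But the second alternative is incompatible with the uniform $\vu$-moment bound: if $\rho_{\tau,i}^{k_m}$ were to concentrate at $0$ along some sequence, then $\si\inte \vu(|x|^2)\rho_{\tau,i}^{k_m} \to \vu(0)\si\beta_i = 0$ would not contradict boundedness directly, so instead I argue that concentration at the origin forces the second moment to vanish in the limit, contradicting $M_2(\bm{\rho}_{\tau}^k) \geq$ (a positive quantity controlled from below — here one uses that $\bm{\rho}_{\tau}^k$ cannot concentrate because the discrete second moment identity Lemma \ref{wdestimate} combined with $M_2(\bm{\rho}_{\tau}^k) \le M_2(\bm{\rho}^0)$ alone is not enough; rather, the cleanest route is: a concentrating sequence has $M_2 \to 0$, but more importantly uniform integrability of $|x|^2$ rules out \emph{any} loss of mass to a point, so concentration is excluded outright). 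Hence the first alternative holds: $\mathcal{H}(\bm{\rho}_{\tau}^k) \le \mathcal{C}_0(T)$ uniformly. Finally, converting $\mathcal{H}$ to $\sum_i\inte\rho_{\tau,i}^k|\ln\rho_{\tau,i}^k|$ costs only $M_2(\bm{\rho}_{\tau}^k) + 2\ln(2\pi)\si\beta_i + 2n/e$ via the elementary inequality \cite[Lemma $2.6$]{BD} quoted in Step 4 above, all uniformly bounded; setting $\mathcal{C}_{ap}(T)$ to be the resulting constant finishes the proof.

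The main obstacle I anticipate is the bookkeeping in the iteration of Lemma \ref{better integrability}: ensuring that at every step $l$ the hypothesis $\si\vu(|x|^2)\rho_{\tau,i}^l \in L^1$ is legitimately available (rather than circularly assumed), and that the constant $C_0$ there genuinely does not depend on $l$ or on the $L^1$-norms of the $\vu$-moments — both of which are asserted in the statement and remark following Lemma \ref{better integrability}, so the difficulty is one of careful citation rather than new ideas. The second delicate point is excluding the concentration alternative of Remark \ref{mms existence remark}: this is where uniform integrability of $|x|^2$ (not merely boundedness of $M_2$) does the real work, since a Dirac-concentrating sequence is exactly a sequence along which $|x|^2$ fails to be uniformly integrable with positive mass escaping to a point while the moment stays bounded — precluded by the $\vu$-bound.
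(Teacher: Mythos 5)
Your overall strategy---propagating a $\vu$-moment bound via Lemma \ref{better integrability} and then excluding concentration via Remark \ref{mms existence remark}---follows the paper in spirit, but there are two genuine gaps, one technical and one logical.

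\textbf{Gap 1: a priori $\vu$-integrability of the iterates.} To apply Lemma \ref{better integrability} at step $l$ you need to know \emph{a priori} that $\si\inte\vu(|x|^2)\rho_{\tau,i}^l\,dx<\infty$. You fix $\vu$ once and for all from $\bm{\rho}^0$, but that gives no control on the $\vu$-moment of any iterate $\rho_{\tau,i}^l$ for $l\geq 1$. Your proposed justifications do not close this: (i) invoking Lemma \ref{regularity} via \eqref{theta} is circular, since \eqref{theta} already presupposes the entropy bound you are trying to prove; moreover $W^{1,1}$ regularity carries no information about higher moments. (ii) The minimizing property bounds $\f(\vr)$ and $\dw^2(\vr,\et)$, neither of which controls a superquadratic moment such as $\si\inte\vu(|x|^2)\vri$. (iii) A truncation device may be salvageable but is not carried out, and truncations of $\vu$ will violate the requirements in \eqref{p} (in particular $\vu^{\prime}(r)\to\infty$), so it is far from routine. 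The paper sidesteps this entirely by arguing by contradiction and applying de la Vall\'{e}e Poussin to the measure $\mu=\sum_{m}\frac{1}{(k_m+1)2^{m+1}}\sum_{l=0}^{k_m}\si\rho_{\tau_m,i}^l$, i.e.\ to a countable convex combination of \emph{all} iterates along the hypothetical blow-up sequence. This simultaneously manufactures a $\vu$ for which every $\rho_{\tau_m,i}^l$ automatically has finite $\vu$-moment, so Lemma \ref{better integrability} can be iterated legitimately. This is the essential idea that your proposal is missing.

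\textbf{Gap 2: the concentration-exclusion step is incorrect as written.} Uniform integrability of $|x|^2$ with respect to the family of iterates does \emph{not} rule out concentration at a point; the Dirac mass $\beta\delta_0$ trivially has $|x|^2$ integrable. What it does give is that if $\rho_{\tau_m,i}^{k_m}\overset{\ast}{\rightharpoonup}\beta_i\delta_0$ then $M_2(\bm{\rho}_{\tau_m}^{k_m})\to 0$. To finish one must then produce a lower bound on $M_2(\bm{\rho}_{\tau_m}^{k_m})$, and this is precisely where Lemma \ref{wdestimate} is used: summing the telescoping identity $M_2(\bm{\rho}_{\tau}^{l-1})-M_2(\bm{\rho}_{\tau}^l)=\dw^2(\bm{\rho}_{\tau}^l,\bm{\rho}_{\tau}^{l-1})$ and using \eqref{ap4} gives $M_2(\bm{\rho}_{\tau_m}^{k_m})=M_2(\bm{\rho}^0)-O(\tau_m)$, which stays bounded away from zero as $\tau_m\to 0$. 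You explicitly dismiss this route (``Lemma \ref{wdestimate} combined with $M_2(\bm{\rho}_{\tau}^k)\leq M_2(\bm{\rho}^0)$ alone is not enough'') and replace it with the false claim that uniform integrability of $|x|^2$ excludes concentration outright. The dismissed route is exactly what works; the substitute does not.

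In short: the high-level plan is the right one, but the crucial device of choosing $\vu$ adapted to the full countable family of iterates (rather than to $\bm{\rho}^0$ alone) is absent, and the final contradiction needs to be derived from $M_2\to 0$ \emph{together with} the discrete second-moment identity, not from a purported direct obstruction to concentration.
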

\begin{proof}
 Fix $T,$ and assume that the claim is false. Then there exists a sequence $\tau_m \rightarrow 0+$ as $m \rightarrow \infty,$ and $k_m \in \mathbb{N}$ such that $k_m \tau_m \leq T$ 
 and $\mathcal{H}(\rkm) \rightarrow \infty.$ By \eqref{ap2} we know that 
 \begin{align*}
  \f(\rkm) \leq \f(\bm{\rho}^0), \ \ for \ all \ m.
 \end{align*}
Moreover, recall that by Lemma \ref{zero com} each $\rkm$ satisfies zero center mass condition i.e., $\si \inte x \rho_{\tau_m,i}^{k_m}(x) dx = 0.$
Proceeding as in the proof of Theorem \ref{mms existance thm} (and heeding Remark \ref{mms existence remark}) we conclude that $\rho_{\tau_m,i}^{k_m} \overset{\ast}{\rightharpoonup} \beta_i \delta_0$ in the 
weak*-topology of measures. Next we are going to obtain uniform integrability of the second moment of $\rkm.$ We set 
\begin{align*}
 \mu = \sum_{m=1}^{\infty} \frac{1}{(k_m+1)2^{m+1}} \sum_{l=0}^{k_m}\si \rho_{\tau_m,i}^{l} .
\end{align*}
Then taking into account Lemma \ref{apriori}, we obtain 
\begin{align*}
 M_2(\mu) = \sum_{m=1}^{\infty} \frac{1}{(k_m+1)2^{m+1}} \sum_{l=0}^{k_m}M_2(\bm{\rho}_{\tau_m}^{l} )\leq 2M_2(\bm{\rho}^0).
\end{align*}
For the above choice of the measure $\mu,$ we apply de la Vell\'{e}e Poussin lemma to $g(x) = |x|^2.$ There exists a smooth convex function $\vu$
satisfying all the properties enumerated in \eqref{p} such that $\inte \vu(|x|^2) \ d\mu(x) =\mathcal{C}_0< +\infty.$ As a consequence we obtain 
\begin{align*}
 \si \inte \vu(|x|^2) \rho_{\tau_m,i}^{l} \ dx \leq (k_m + 1)2^{m+1}\mathcal{C}_{0}, \ \ for \ all \ l \in \{0, \ldots, k_m\}.
\end{align*}
Now we use Lemma \ref{better integrability} to obtain uniform estimates on $\si \inte \vu(|x|^2) \rho_{\tau_m,i}^{k_m}(x) \ dx.$ Applying Lemma 
\ref{better integrability} with $\vr = \rkm$ and $\et = \rkam$ we see that 
\begin{align*}
 \si \inte \vu(|x|^2) \rho_{\tau_m,i}^{k_m}(x) \ dx \leq (1 + C_0 \tau_m)\si \inte \vu(|x|^2) \rho_{\tau_m,i}^{k_m-1}(x) \ dx + C_0\tau_m.
\end{align*}
Iterating this process $k_m$ times we obtain
\begin{align} \label{iterbbd}
 \si \inte \vu(|x|^2) \rho_{\tau_m,i}^{k_m}(x) \ dx \leq& (1 + C_0 \tau_m)^{k_m}\si \inte \vu(|x|^2) \rho_{\tau_m,i}^{0}(x) \ dx \notag \\
 & \ + C_0\tau_mk_m(1 + C_0 \tau_m)^{k_m}.
\end{align}
Since $k_m\tau_m \leq T,$ and $\rho_{\tau_m,i}^{0} = \rho_i^0$ we deduce from \eqref{iterbbd}
\begin{align} \label{higher moment}
 \si \inte \vu(|x|^2) \rho_{\tau_m,i}^{k_m}(x) \ dx \leq e^{C_0T}\si \inte \vu(|x|^2) \rho_{i}^{0}(x) \ dx + C_0Te^{C_0T}.
\end{align}

Taking into account \eqref{higher moment} and the the weak* convergence to the Dirac measure we conclude $M_2(\rkm) \rightarrow 0$ as $m \rightarrow \infty$. On the other hand, by Lemma \ref{wdestimate}
\begin{align} \label{q1}
 M_2(\rkam) - M_2(\rkm) &= \dw^2(\rkm, \rkam) \notag \\
 & \ \ \vdots \notag \\
 M_2(\bm{\rho}^0_{\tau_m}) - M_2(\bm{\rho}^1_{\tau_m}) &= \dw^2(\bm{\rho}^1_{\tau_m}, \bm{\rho}^0_{\tau_m}).
\end{align} 
Adding all the terms in \eqref{q1} and using $\bm{\rho}^0_{\tau_m} =\bm{\rho}^0$  and Lemma \ref{apriori} we get
\begin{align*}
 M_2(\bm{\rho}^0) = M_2(\rkm) + \sum_{l=1}^{k_m} \dw^2 (\bm{\rho}_{\tau_m}^l, \bm{\rho}_{\tau_m}^{l-1}) = M_2(\rkm) + O(\tau_m) \rightarrow 0
\end{align*}
as $m \rightarrow \infty,$ contradicting $\bm{\rho}^0 \in \ga.$ This contradiction appeared because we assumed the entropy is not uniformly bounded. Hence,
our assumption was wrong and the lemma is proved.
\end{proof}

\subsection{A priori estimates on the time interpolation}
\subsubsection{Time interpolation} We define the piece wise constant time dependent interpolation
\begin{align*}
 \bm{\rho}_{\tau}(t) = \rk, \ \ if \ t \in ((k-1)\tau, k\tau], \ \ k \geq 1.	
\end{align*}

Recall that the Newtonian potential of $\rki$ is defined by 
\begin{align}\label{np}
 u_{\tau,i}^k(x) = -\frac{1}{2\pi}\inte \ln|x-y| \rki(y)dy, \ \ for \ i=1,\ldots,n.
\end{align}
By definition of $\bm{\rho}_{\tau}(t),$ we have $u_{\tau, i}(t) = u_{\tau, i}^k(t)$ for all $t \in ((k-1)\tau, k\tau], \ \ k \geq 1.$

As a consequence of the a priori estimates obtained in Lemma \ref{apriori}, Lemma \ref{apriori entropy} and the regularity estimates stated 
in Lemma \ref{regularity} we obtain the following:
\begin{Lem}\label{eti}
 For every $T>0,$ there exists a finite constant $\mathcal{C}(T) = \mathcal{C}(T, \bm{\rho}^0)>0,$ depending only on the time $T$ and the initial data $\bm{\rho}^0,$ 
 such that for every $\tau \in (0,\tau^*)$ we have

 \begin{itemize}
 \item[(a)] uniform entropy and second moment bound
\begin{align*}
 \sup_{t \in [0,T], \tau \in (0,\tau^*)} \si\left(\inte \rho_{\tau,i}(t)|\ln \rho_{\tau,i}(t)| + M_2(\rho_{\tau,i}(t))\right) \leq \mathcal{C}(T).
 \end{align*}
\item[(b)] Fisher information bound
\begin{align*}
\si \int_0^{T}\inte (\rti(t))^2 \ dxdt 
+ \si\int_{0}^{T} \inte \left|\frac{\nabla \rti(t)}{\rti(t)}\right|^2\rti(t) \ dxdt \leq \mathcal{C}(T).
\end{align*}
\item[(c)] Continuity in time bound: there exists $\mathcal{C}$
 such that
\begin{align*}
\dws(\rho_{\tau,i}(t), \rho_{\tau,i}(s)) \leq \mathcal{C}\left(\sqrt{|t-s|} + \sqrt{\tau}\right), \ s,t \in [0,\infty) \ and \ i=1,\ldots,n.
 \end{align*}
\end{itemize}

 The Newtonian potentials $u_{\tau,i}(t), t\in [0,T]$ satisfy the following uniform 
 $H^1_{loc}$ and $L^2-H^2_{loc}$ estimate:
  \begin{itemize}
 \item[(d)] For each $R>0,$ there exists a constant $\mathcal{C}(R,T)$ depending only on $R$ and $\mathcal{C}(T)$ such that 
 \begin{align*}
  \si ||u_{\tau,i}(t)||_{H^1(B_R)} +  \si\int_0^T ||u_{\tau, i}(t)||_{H^2(B_R)}^2 \ dt \leq \mathcal{C}(R,T).
 \end{align*}
\end{itemize}
 \end{Lem}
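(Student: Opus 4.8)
The plan is to assemble the five assertions (a)--(d) essentially as bookkeeping consequences of the a priori estimates already established, together with the regularity Lemma \ref{regularity}. First, for (a), I would observe that Lemma \ref{apriori} gives the uniform bound $M_2(\rk) \le M_2(\bm{\rho}^0)$ for every $k$ with $k\tau \le T$, so $\sup_t M_2(\rho_{\tau,i}(t)) \le M_2(\bm{\rho}^0)$. The entropy bound $\si \inte \rki |\ln \rki| \le \mathcal{C}_{ap}(T)$ is precisely Lemma \ref{apriori entropy}, valid uniformly over $\tau \in (0,\tau^*)$ and $k\tau \le T$; since $\bm{\rho}_\tau(t) = \rk$ on $((k-1)\tau, k\tau]$, taking the sup over $t \in [0,T]$ and over $\tau$ gives (a) with $\mathcal{C}(T)$ the sum of these two constants (with a harmless additive adjustment, since $\rho|\ln\rho| \le \rho\ln\rho + 2M_2(\rho) + C\beta + C$ by \cite[Lemma 2.6]{BD}, the entropy-from-above bound combined with the second moment bound controls $|\ln\rho|$).

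Next, for (b): having the uniform bound $\Theta := \mathcal{C}(T)$ on $\si\inte\rki|\ln\rki| + M_2(\rk)$, Lemma \ref{regularity} applies and yields $\rki \in W^{1,1}(\rt)$, $\nabla\rki/\rki \in L^2(\rt,\rki)$, and the quantitative bound \eqref{w11 estimate}. Summing \eqref{w11 estimate} over $k = 1, \ldots, \lfloor T/\tau\rfloor$ telescopes the entropy differences: $\sum_k \frac{2}{\tau}[\mathcal{H}(\rka) - \mathcal{H}(\rk)] = \frac{2}{\tau}[\mathcal{H}(\bm{\rho}^0) - \mathcal{H}(\rk^{\lfloor T/\tau\rfloor})]$. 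This is where one must be slightly careful: the telescoped term looks like it carries a $1/\tau$; but integrating in time over $((k-1)\tau, k\tau]$ multiplies each summand by $\tau$, so $\si\int_0^T \inte |\nabla\rti/\rti|^2\rti \, dxdt \le 2[\mathcal{H}(\bm{\rho}^0) - \inf \mathcal{H}] + C(\Theta) T$, and $\inf\mathcal{H}$ over the relevant class is bounded below using the entropy-from-above bound of (a) again. The $L^2$-in-space-time bound $\si\int_0^T\inte \rti^2 \le \mathcal{C}(T)$ follows from the Gagliardo--Nirenberg-type inequality $\|\rho\|_{L^2(\rt)}^2 \le C\|\nabla\sqrt\rho\|_{L^2}^2 \|\rho\|_{L^1}$, i.e. $\inte \rho^2 \le C \beta \inte |\nabla\rho|^2/\rho$, integrated in time — this is the standard trick converting Fisher information into $L^2$ integrability in $\rt$.

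For (c), the continuity-in-time estimate is the classical consequence of the minimizing movement scheme: from \eqref{ap4}, $\sum_l \dws^2(\rho_{\tau,i}^l,\rho_{\tau,i}^{l-1}) \le 2\tau[\f(\bm{\rho}^0) - C_{\mathrm{LHLS}}(\bm{\beta})]$, and for $s \le t$ with $s \in ((k_0-1)\tau,k_0\tau]$, $t \in ((k_1-1)\tau,k_1\tau]$ one uses the triangle inequality and Cauchy--Schwarz: $\dws(\rho_{\tau,i}(t),\rho_{\tau,i}(s)) \le \sum_{l=k_0+1}^{k_1}\dws(\rho_{\tau,i}^l,\rho_{\tau,i}^{l-1}) \le \sqrt{k_1-k_0}\,\big(\sum_l \dws^2\big)^{1/2} \le \sqrt{(t-s)/\tau + 1}\cdot\sqrt{2\tau\,[\cdots]} = \mathcal{C}(\sqrt{|t-s|}+\sqrt\tau)$, after absorbing the rounding error $|k_1-k_0 - (t-s)/\tau| \le 1$. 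Finally, for (d): $u_{\tau,i}(t) = -\frac{1}{2\pi}\ln|\cdot| * \rho_{\tau,i}(t)$ solves $-\Delta u_{\tau,i}(t) = \rho_{\tau,i}(t)$, so elliptic regularity on $B_R$ gives $\|u_{\tau,i}(t)\|_{H^1(B_R)} \le C(R)(\|\rho_{\tau,i}(t)\|_{L^1} + \|u_{\tau,i}(t)\|_{L^2(B_{2R})})$, and the pointwise bound $|u_{\tau,i}(t,x)| \le C(1 + \ln(1+|x|))(\beta_i + M_2 + \mathcal{H}_+)$ — standard for the $2$D logarithmic potential with bounded mass, entropy and second moment, see e.g. \cite{BD, BCM} — controls the $L^2(B_{2R})$ norm using (a); then Calderón--Zygmund $\|u_{\tau,i}(t)\|_{H^2(B_R)} \le C(R)(\|\rho_{\tau,i}(t)\|_{L^2} + \|u_{\tau,i}(t)\|_{L^2(B_{2R})})$ integrated in $t$ over $[0,T]$ invokes the $L^2$-in-space-time bound from (b).

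I expect the main obstacle to be the bookkeeping in (b): making sure the telescoping of the entropy differences in \eqref{w11 estimate} really produces a bound independent of $\tau$ (the apparent $1/\tau$ must cancel against the $\tau$ from the time integration, and the lower bound on $\mathcal{H}(\rk)$ must come cleanly from the uniform second-moment and entropy-from-above control of (a)), and in (d) the careful statement of the logarithmic-potential pointwise bound that converts $L^1 + $ entropy $+$ second moment into a local $L^2$ bound on $u_{\tau,i}$. All the genuinely new analytic input (the critical-mass existence of minimizers, the discrete second moment identity, the higher-moment propagation) has already been done in the earlier sections and in Lemma \ref{better integrability}; this lemma is the assembly step.
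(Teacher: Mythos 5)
Your assembly is correct and mirrors the paper's own treatment: the paper derives (a) directly from Lemmas \ref{apriori} and \ref{apriori entropy} exactly as you do, and then simply cites \cite{KW19} for (b)--(d), observing (as you also note) that once (a) is available the sub-critical bookkeeping applies verbatim, including the telescoping of \eqref{w11 estimate}, the Ladyzhenskaya-type conversion of Fisher information into an $L^2$-in-space-time bound, and the triangle/Cauchy--Schwarz argument for the $\tfrac12$-H\"older continuity. One small imprecision in your sketch of (d): the inequality $\|u_{\tau,i}(t)\|_{H^1(B_R)} \le C(R)\big(\|\rho_{\tau,i}(t)\|_{L^1(\rt)} + \|u_{\tau,i}(t)\|_{L^2(B_{2R})}\big)$ is false as written in two dimensions (a Dirac mass has $\nabla u \notin L^2_{\mathrm{loc}}$), so the $\|\nabla u_{\tau,i}\|_{L^2(B_R)}$ part genuinely needs the entropy bound --- e.g.\ via the pointwise logarithmic-potential estimate you already invoke together with an integration-by-parts/cut-off argument for $\int |\nabla u|^2$, or via $L\ln L \hookrightarrow H^{-1}_{\mathrm{loc}}$ --- which is available from (a), so the conclusion stands.
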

 
 We again emphasize on the fact that once we have the uniform entropy and second moment bound, the remaining estimates stated in the above theorem 
 can be obtained proceeding as in the case of sub-critical $\bm{\beta}$. The proof of $(a)$ follows from the a priori estimates already obtained in the previous two subsections.
 For a proof of $(b), (c)$ and $(d)$ we refer to our earlier work \cite{KW19} (see Lemma $6.1$ and Lemma $6.2$ for details).

\section{Global in time existence and free energy inequality}\label{section global existence}

\subsection{Convergence of MM-scheme}
Thanks to Lemma \ref{eti}(a) and (c) we can apply the refined Arzel\`{a}-Ascoli theorem (\cite[Proposition $3.3.1$]{AGS}) to the 
family $\{\bm{\rho}_{\tau}(t) | \ t \in [0,T], \ \tau \in (0,\tau^*)\}$ to obtain a curve $\bm{\rho}(t) :[0,T] \rightarrow (L^1(\rt))^n$ and a monotone decreasing
sequence $\tau_m \rightarrow 0+$ such that 

\vspace{0.2 cm}

\begin{itemize}
\item[{\bf (C1)}] $\rho_{\tau_m, i}(t) \rightharpoonup \rho_i(t)$ weakly in $L^1(\rt)$ for every $t \in [0,T],$ for all $i = 1,\ldots, n.$
\end{itemize}

\vspace{0.2 cm}

The continuity in time estimate Lemma \ref{eti}(c) also ensures that the limiting curve is H\"{o}lder continuous in time with respect to $\dw$ i.e., $\dws(\rho_i(t), \rho_i(s)) \leq \mathcal{C}|t-s|^{\frac{1}{2}}$ for all $i$.
Moreover, from the $L^2((0,T) \times \rt)$ bound of Lemma \ref{eti}(b) we infer that 

\vspace{0.2 cm}

\begin{itemize}
 \item[{\bf (C2)}]  $\rho_{\tau_m, i} \rightharpoonup \rho_i \ \ weakly \ in \ L^2((0,T)\times \rt)$ for all $i = 1,\ldots,n.$
\end{itemize}

\vspace{0.2 cm}

On the other hand, thanks to the Fisher information bound of Lemma \ref{eti}(b) and the compactness of vector fields Lemma \ref{cvf}
(see appendix) applied to the measures
$d\mu_m = (T\beta_i)^{-1}\rho_{\tau_m,i}dxdt,$ and the vector fields $\tilde v_m = (\nabla_x \rho_{\tau_m,i}/\rho_{\tau_m,i}, 1)
 =(v_m,1),$ we obtain the existence of a vector field $v_i \in L^2((0,T)\times\rt, \rho_i\ ;\rt)$ such that

\begin{align*}
 \lim_{m \rightarrow \infty}\int_0^T\inte \zeta \cdot v_m\rho_{\tau_m,i} \ dxdt= \int_0^T\inte \zeta \cdot v \rho_i \ dxdt, \ \ 
\ \ for \ all \ \zeta \in C_c^{\infty}((0,T)\times \rt ; \rt).
 \end{align*}
Moreover, since $v\rho_i \in L^1((0,T)\times \rt;\rt)$ we can identify $v_i = \frac{\nabla \rho_i}{\rho_i}$ 
through the following identity
\begin{align*}
\mbox{{\bf(C3)}} \ \  \lim_{m \rightarrow \infty}\int_0^T\inte \zeta \cdot v_m\rho_{\tau_m,i} 
 =\lim_{m \rightarrow \infty}-\int_0^T\inte (\nabla_x \cdot \zeta)  \rho_{\tau_m,i}
=-\int_0^T\inte (\nabla_x \cdot \zeta)  \rho_i,
\end{align*}

\vspace{0.2 cm}

which also ensures that $\rho_i \in L^1((0,T);W^{1,1}(\rt)).$ By lower semicontinuity
(Lemma \ref{cvf} equation \eqref{lscv}) and Lemma \ref{eti}(b)
\begin{align}\label{7.3}
 \int_0^T \inte \left|\frac{\nabla \rho_i(t)}{\rho_i(t)}\right|^2\rho_i(t) \ dxdt < \infty,
\end{align}
proving that $\frac{\nabla \rho_i}{\rho_i} \in L^2((0,T) \times \rt, \rho_i \ ; \rt)$ for all $i \in \{1,\ldots,n\}.$

In addition, using the $H^1_{loc}$ estimates of Lemma \ref{eti}(d), and proceeding as in \cite[Lemma $7.1$]{KW19} we get up to a subsequence
$u_{\tau_m, i} \rightarrow u_i$ strongly in $L^2((0,T); L^2_{loc}(\rt)),$ where $u_i$ is the Newtonian potential of the limiting curve $\rho_i.$
This together with the $L^2-H^2_{loc}$ estimate of Lemma \ref{eti}(d) and Simon's compactness results \cite[Lemma $9$]{Simon} ensures

\begin{equation*}
\noindent
\mbox{{\bf (C4)}} \hspace{2.8 cm}
u_{\tau_m,i} \rightarrow u_i \ \ strongly \ in \ L^2((0,T);H^1_{loc}(\rt)). \hspace{3 cm}
\end{equation*}

\subsection{Global in time existence of solutions}
\label{Gexistence} For any time dependent test function $\xi \in C_c^{\infty}([0,T) \times \rt),$ we apply 
Lemma \ref{el}(c) with the choices: $\vri = \rho_{\tau_m,i}^k, \ \eta_i = \rho_{\tau_m,i}^{k-1}, \ \psi(\cdot) = \xi(\cdot, (k-1)\tau_m),$
where $k$ is such that $k\tau_m \leq T.$ Summing over all $k$ satisfying $k \tau_m \leq T$ we obtain the following time discrete formulation 
of the system \eqref{kss}
\begin{align}\label{final el}
 -\int_0^{T} \inte \partial_t\xi\rho_{\tau_m,i} &- \inte \xi(0)\rho_i^0
 +\int_{0}^{T} \inte \nabla\xi \cdot \nabla \rho_{\tau_m,i} \notag \\
 &-\sj a_{ij}\int_{0}^{T} \inte  \nabla \xi \cdot \nabla u_{\tau_m,j}\rho_{\tau_m,i}dx 
= O(\tau_m^{\frac{1}{2}}),
\end{align}
for all $i = 1,\ldots, n.$ Now it is easy to pass the limit in \eqref{final el} using the convergence results {\bf(C1)} - {\bf(C4)}. For the first 
term we use {\bf(C2)}, for the third term we use {\bf(C3)} and finally for the last term we use {\bf (C2), (C4)} and the duality relation. The limiting equation is the 
weak formulation of \eqref{kss} stated in Definition \ref{weak sol}. Since $T$ is arbitrary, we conclude the proof of
global existence.

So far we have proved Theorem \ref{main} (a). The proof of (b) is the content of the next subsection.
\subsection{The free energy inequality} 
As in the sub-critical case, we show using De Giorgi variational interpolation, that the obtained solution 
satisfies the free energy inequality. 
Define for $\tau \in (0,\tau^*)$
\begin{align}\label{degiorgi}
\bm{\tilde \rho}_{\tau}(t) := \arg \min_{\bm{\rho}\in \ga} \left\{\f(\bm{\rho}) + \frac{1}{2(t - (k-1)\tau)}
\dw^2(\bm{\rho},\bm{\rho}_{\tau}^{k-1})\right\}, \ \ t \in ((k-1)\tau, k\tau]. 
\end{align}
With out loss of generality we can assume that $\bm{\tilde \rho}_{\tau}(k\tau) = \bm{\rho}_{\tau}^k.$ Note that since $0<t - (k-1)\tau \leq \tau< \tau^*$ 
and $\rka$ satisfies the inequality 
\begin{align*}
 \f(\rka) < \inf_{\bm{\rho} \in \ga} \F+ \frac{1}{2\tau}M_2(\rka) \leq \inf_{\bm{\rho} \in \ga} \F+ \frac{1}{2(t - (k-1)\tau)}M_2(\rka),
\end{align*}
the De Giorgi's interpolation \eqref{degiorgi} is well defined. Moreover, combining \cite[Theorem $3.1.4$ and Lemma $3.3.2$]{AGS} together with Lemma \ref{el}(b) we get the discrete energy identity stated below.

\begin{Lem}[Discrete energy identity] \label{dei}
 For every $k \in \mathbb{N}$ and $\tau \in (0,\tau^*)$ the De-Giorgi interpolation defined by \eqref{degiorgi} 
 satisfies the following energy identity:
\begin{align*}
&\si\frac{1}{2} \int_{0}^{k\tau}  \inte \left|\frac{\nabla \rho_{\tau,i}}{\rho_{\tau,i}} 
 - \sj a_{ij}\nabla u_{\tau,j}\right|^2 \rho_{\tau,i} \ dxdt \\
 &+ \si\frac{1}{2} \int_0^{k\tau}
 \inte \left|\frac{\nabla \tilde \rho_{\tau,i}}{\tilde \rho_{\tau,i}} - \sj a_{ij}\nabla \tilde u_{\tau,j}\right|^2 
 \tilde \rho_{\tau,i} \ dxdt + \f(\bm{\rho}_{\tau}(k\tau)) = \f(\bm{\rho}^0),
\end{align*}
where $\tilde u_{\tau,j}$ is the Newtonian potential associated to $\tilde \rho_{\tau,j}.$

Furthermore, for every $T>0$ there exists a constant $C(T)>0$ such that 
\begin{align}\label{same limit}
 \dw(\bm{\rho}_{\tau}(t), \bm{\tilde \rho}_{\tau}(t)) \leq C(T)\tau, \ \ for \ all \ t \in [0,T].
\end{align}
 \end{Lem}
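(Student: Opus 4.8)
The plan is to realize the two dissipation integrals in the statement as, respectively, the squared discrete ``metric velocity'' of $\bm{\rho}_\tau$ and the squared ``metric slope'' of $\f$ along the De Giorgi interpolant $\bm{\tilde\rho}_\tau$, and then to run the De Giorgi variational interpolation energy balance of \cite[Theorem $3.1.4$ and Lemma $3.3.2$]{AGS} in our setting, where $\f$ is only lower semicontinuous and \emph{not} $\lambda$-geodesically convex. A preliminary point: the argument needs $\bm{\tilde\rho}_\tau(t)$ from \eqref{degiorgi} to be well defined for \emph{every} $t\in((k-1)\tau,k\tau]$, not merely at grid points. This is exactly what the chain of inequalities preceding the statement arranges, since $\rka$ obeys $\f(\rka)<\inf_{\bm{\rho}\in\ga}\f(\bm{\rho})+\frac{1}{2s}M_2(\rka)$ for every $0<s\le\tau<\tau^*$, whence Theorem \ref{mms existance thm} supplies a minimizer at each such $s$.

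The heart of the proof is a one-step variational identity. Fix $k$, put $\et:=\rka$, and for $s\in(0,\tau]$ define $g(s):=\inf_{\bm{\rho}\in\ga}\big(\f(\bm{\rho})+\frac{1}{2s}\dw^2(\bm{\rho},\et)\big)$, so that $g(s)=\f\big(\bm{\tilde\rho}_\tau((k-1)\tau+s)\big)+\frac{1}{2s}\dw^2\big(\bm{\tilde\rho}_\tau((k-1)\tau+s),\et\big)$. First I would show $g$ is locally Lipschitz on $(0,\tau]$: minimality and $\f\ge C_{\mbox{{\tiny{LHLS}}}}(\bm{\beta})$ give $\frac{1}{2s}\dw^2(\bm{\tilde\rho}_\tau((k-1)\tau+s),\et)\le\f(\et)-C_{\mbox{{\tiny{LHLS}}}}(\bm{\beta})$, so the competitors relevant on a compact $s$-interval have uniformly bounded $\dw^2(\cdot,\et)$, and comparing the minimality inequalities at $s_1<s_2$ controls $|g(s_1)-g(s_2)|$. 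An envelope (Danskin) argument then yields $g'(s)=-\frac{1}{2s^2}\dw^2\big(\bm{\tilde\rho}_\tau((k-1)\tau+s),\et\big)$ for a.e.\ $s$. Next I would identify the boundary value: the estimate just displayed forces $\dw^2\big(\bm{\tilde\rho}_\tau((k-1)\tau+s),\et\big)\le 2s(\f(\et)-C_{\mbox{{\tiny{LHLS}}}}(\bm{\beta}))\to0$, and since $\et=\rka\in\ga$ is not a Dirac mass the dichotomy of Remark \ref{mms existence remark} excludes concentration, so $\bm{\tilde\rho}_\tau((k-1)\tau+s)\rightharpoonup\et$ weakly in $L^1(\rt)$; feeding this into lower semicontinuity of $\f$ (Proposition \ref{propertyf}(c)), sandwiched by $\f(\bm{\tilde\rho}_\tau((k-1)\tau+s))\le g(s)\le\f(\et)$, gives $\lim_{s\to0^+}g(s)=\f(\et)$. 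Integrating $g'$ over $(0,\tau]$ then produces
\begin{align*}
 \f(\rk)+\frac{1}{2\tau}\dw^2(\rk,\rka)-\f(\rka)=-\frac12\int_0^\tau\frac{1}{s^2}\dw^2\big(\bm{\tilde\rho}_\tau((k-1)\tau+s),\rka\big)\,ds .
\end{align*}

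It then remains to identify the two $\dw^2$-terms with the dissipation integrals and to sum. On $((l-1)\tau,l\tau]$ one has $\bm{\rho}_\tau(t)=\rl$ and $u_{\tau,j}(t)=u_{\tau,j}^l$, so Lemma \ref{el}(b) with $\vr=\rl$, $\et=\rla$ at parameter $\tau$ gives $\frac{1}{2\tau}\dw^2(\rl,\rla)=\frac12\int_{(l-1)\tau}^{l\tau}\si\inte\big|\tfrac{\nabla\rho_{\tau,i}}{\rho_{\tau,i}}-\sj a_{ij}\nabla u_{\tau,j}\big|^2\rho_{\tau,i}\,dx\,dt$, while Lemma \ref{el}(b) at parameter $s$ with $\vr=\bm{\tilde\rho}_\tau((l-1)\tau+s)$, $\et=\rla$ (and the change of variables $t=(l-1)\tau+s$) converts $\frac12\int_0^\tau s^{-2}\dw^2\big(\bm{\tilde\rho}_\tau((l-1)\tau+s),\rla\big)\,ds$ into $\frac12\int_{(l-1)\tau}^{l\tau}\si\inte\big|\tfrac{\nabla\tilde\rho_{\tau,i}}{\tilde\rho_{\tau,i}}-\sj a_{ij}\nabla\tilde u_{\tau,j}\big|^2\tilde\rho_{\tau,i}\,dx\,dt$. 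Substituting into the one-step identity and summing over $l=1,\dots,k$, the telescoping sum $\sum_l(\f(\rl)-\f(\rla))$ collapses to $\f(\bm{\rho}_\tau(k\tau))-\f(\bm{\rho}^0)$, which is precisely the asserted energy identity. For \eqref{same limit}, comparing the minimality conditions of \eqref{degiorgi} at two parameters shows $s\mapsto\dw\big(\bm{\tilde\rho}_\tau((k-1)\tau+s),\rka\big)$ is non-decreasing, hence $\le\dw(\rk,\rka)$ on the whole interval; combining this with the bound $\dw^2(\rk,\rka)\le2\tau\big(\f(\bm{\rho}^0)-C_{\mbox{{\tiny{LHLS}}}}(\bm{\beta})\big)$ (the one-step identity at $s=\tau$ together with $\f(\rl)\le\f(\rla)$) and the triangle inequality gives \eqref{same limit}.

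I expect the main obstacle to be the upgrade from the cheap discrete energy \emph{inequality} $\f(\rk)+\frac{1}{2\tau}\sum_{l}\dw^2(\rl,\rla)\le\f(\bm{\rho}^0)$, which is immediate from the scheme, to the \emph{identity}: because $\f$ is not geodesically $\lambda$-convex, the convexity half of the usual AGS argument is unavailable, so equality has to be squeezed out of the absolute continuity of $g$ plus the envelope formula for $g'$, and—most delicately—out of the boundary value $g(0^+)=\f(\rka)$, which genuinely uses the no-concentration alternative of Remark \ref{mms existence remark} to upgrade weak$^*$ convergence of measures to weak $L^1$ convergence so that lower semicontinuity of $\f$ applies. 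Finiteness of all the dissipation integrals, which legitimizes the manipulations, is automatic a posteriori since $\int_0^\tau s^{-2}\dw^2\big(\bm{\tilde\rho}_\tau((k-1)\tau+s),\rka\big)\,ds=2\big(\f(\rka)-g(\tau)\big)<\infty$.
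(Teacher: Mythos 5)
Your argument is essentially the AGS De Giorgi interpolation machinery that the paper invokes (Theorem 3.1.4 and Lemma 3.3.2 of \cite{AGS}), unrolled explicitly and combined with Lemma \ref{el}(b) exactly as the paper does, so in spirit it is the same proof. What you add --- and this is the right instinct --- is the careful identification of the boundary value $g(0^+)=\f(\rka)$: since in the critical regime $\f$ is \emph{not} lower semicontinuous under plain narrow convergence (the potential limit could be a Dirac measure, outside $\Gamma^{\bm\beta}$), one cannot simply quote the l.s.c. hypothesis of AGS. Your route through Remark \ref{mms existence remark} (energy and second moment of $\bm{\tilde\rho}_\tau((k-1)\tau+s)$ are uniformly bounded, $\dw(\bm{\tilde\rho}_\tau(\cdot),\rka)\to0$, $\rka\in\ga$ is not a Dirac, hence no concentration, hence uniform entropy, hence weak $L^1$ convergence, hence l.s.c. of $\f$ applies) is exactly the adaptation that makes the AGS argument go through here, and the envelope formula, the one-step identity, the application of Lemma \ref{el}(b) at parameter $s$, the change of variables, and the telescoping sum are all correct. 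One minor remark for completeness: to apply Lemma \ref{el}(b) at the interpolated times you need the regularity of Lemma \ref{regularity} for $\bm{\tilde\rho}_\tau((k-1)\tau+s)$, but that is automatic for each fixed $s$ since the minimizer lies in $\ga$.

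The one genuine discrepancy is \eqref{same limit}. Your chain gives, on $((k-1)\tau,k\tau]$ with $k\tau\le T$,
\begin{align*}
\dw\big(\bm{\rho}_\tau(t),\bm{\tilde\rho}_\tau(t)\big)\le 2\,\dw(\rk,\rka)\le 2\sqrt{2\tau\big(\f(\bm{\rho}^0)-C_{\mbox{\tiny{LHLS}}}(\bm\beta)\big)},
\end{align*}
i.e.\ an $O(\sqrt\tau)$ bound, \emph{not} the $O(\tau)$ bound written in the statement; the inequality $\dw^2(\rk,\rka)\le 2\tau\big(\f(\bm{\rho}^0)-C_{\mbox{\tiny{LHLS}}}(\bm\beta)\big)$ you cite only yields $\dw(\rk,\rka)=O(\sqrt\tau)$, and Lemma \ref{apriori} gives nothing sharper. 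This is almost certainly a typo in the statement: the AGS-type interpolation estimate is of order $\sqrt\tau$, and only $\dw(\bm{\rho}_\tau(t),\bm{\tilde\rho}_\tau(t))\to0$ uniformly on $[0,T]$ is used downstream (to conclude that $\bm{\rho}_{\tau_m}$ and $\bm{\tilde\rho}_{\tau_m}$ have the same limit), for which $C(T)\sqrt\tau$ is entirely sufficient. Still, as written, your final sentence claims to derive \eqref{same limit} as stated but actually proves the weaker $\sqrt\tau$ rate; you should either state the $\sqrt\tau$ bound or flag the discrepancy.
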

As a consequence of \eqref{same limit} we infer
that $\bm{\rho}_{\tau_m}$ and $\bm{\tilde
	\rho}_{\tau_m}$ enjoys the same convergence
properties stated in
the previous section {\bf (C1)}-{\bf(C4)},
provided we establish the entropy, second
moment and the Fisher information bound
(Lemma \ref{eti}(a,b)). Here $\tau_m$ is the
same monotone
decreasing sequence used in the proof of
global in time existence. Moreover, both
$\bm{\rho}_{\tau_m}$ and $\bm{\tilde
	\rho}_{\tau_m}$ converge to the
same limit $\bm{\rho}.$
The finite Fisher information bound follows from
Lemma \ref{dei} and the inequality
\begin{align} \label{infm}
\frac{1}{2} \si \inte\frac{|\nabla\tilde\rho_{\tau,i}
	(t)|^2}{\tilde\rho_{\tau,i}(t)} \ dx
\leq & \si \inte \left|\frac{\nabla \tilde\rho_{\tau,i}
	(t)}{\tilde \rho_{\tau,i}(t)} - \sj a_{ij}\nabla \tilde
u_{\tau,j}(t)\right|^2
\tilde \rho_{\tau,i}(t) \ dx \notag \\
&+4n (\max_{i,j \in I} a_{ij})C_0\si \beta_i, \ \ for
\ all \ t \in [0,T],
\end{align}
where $C_0$ is a constant independent of the
interpolates. A proof of this inequality can be
found in \cite[Lemma $2.2$]{FM} (see also
\cite[Lemma $8.3$]{KW19}
in this setting). It remains to show that
\begin{align*}
\sup_m \sup_{t \in [0,T]} \left[\si \inte \tilde
\rho_{\tau_m, i}(t)|\ln \tilde \rho_{\tau_m, i}(t)| \
dx +
M_2( \bm{\tilde \rho}_{\tau_m}(t))\right] < \infty.
\end{align*}
Recall that we have all the estimates on the
interpolates $\bm{\rho}_{\tau_m}$ given by
Lemma \ref{eti}. In this sequel, any uniform
constant will be denoted by $\mathcal{C}.$

\noindent
{\it Second moment bound:} 
Let $t \in [0,T]$ and $k\in \mathbb{N}$ be such that $t \in ((k-1)\tau_m, k \tau_m]$. Then by \eqref{same limit} and Lemma \ref{apriori} we obtain the
required bound:
\begin{align} \label{dgn1}
\dw^2(\bm{\tilde \rho}_{\tau_m}(t),
\bm{\rho}_{\tau_m}^{k-1}) \leq
\mathcal{C}\tau_m, \ M_2(\bm{\tilde
	\rho}_{\tau_m}(t)) \leq \mathcal{C}.
\end{align}
\noindent
{\it Entropy bound:} Assume by contradiction,
there exists a sequence $t_m \in [0,T]$ such
that $\mathcal{H}(\bm{\tilde \rho}_{\tau_m}
(t_m)) \rightarrow \infty. $ Let $k_m \in
\mathbb{N}$ be such that $t_m \in ((k_m
-1)\tau_m, k_m\tau_m].$ Since the energy of
the interpolates $\f(\bm{\tilde \rho}_{\tau_m}
(t_m)) \leq \f(\bm{\rho}_{\tau_m}^{k_m-1})$ are
uniformly bounded, by Theorem \ref{mms
	existance thm} and Remark \ref{mms existence
	remark} we have $\lim_{m \rightarrow \infty}
\bm{\tilde \rho}_{\tau_m}(t_m)
=\bm{\beta}\bm{\delta}_0$ in the weak*
topology of measures.
In view of Lemma \ref{wdestimate} we get
\begin{align*}
M_2(\rkam) - M_2(\bm{\tilde \rho}_{\tau_m}
(t_m)) &= \dw^2(\bm{\tilde \rho}_{\tau_m}(t_m),
\rkam) \notag \\
M_2(\bm{\rho}^{k_m -2}_{\tau_m}) -
M_2(\bm{\rho}^{k_m -1}_{\tau_m}) &=
\dw^2(\bm{\rho}^{k_m-1}_{\tau_m},
\bm{\rho}^{k_m-2}_{\tau_m}) \notag \\
& \ \ \vdots \notag \\
M_2(\bm{\rho}^0_{\tau_m}) -
M_2(\bm{\rho}^1_{\tau_m}) &=
\dw^2(\bm{\rho}^1_{\tau_m},
\bm{\rho}^0_{\tau_m}).
\end{align*}
Adding all the terms and using \eqref{dgn1},
Lemma \ref{apriori} we see that
\begin{align} \label{contra}
M_2(\bm{\rho}^0) = M_2(\bm{\tilde
	\rho}_{\tau_m}(t_m)) + O(\tau_m).
\end{align}
On the other hand, invoking Lemma \ref{de la
	lemma} to the measure
\begin{align*}
\mu = \si \tilde \rho_{\tau_m,i}(t_m) +
\sum_{m=1}^{\infty}\frac{1}{k_m2^{m+1}}
\sum_{l = 0}^{k_m-1} \si \rho_{\tau_m, i}^l
\end{align*}
we obtain the existence of a convex function
$\vu$ satisfying the all the properties \eqref{p}
and such that
$\inte \vu(|x|^2)d \mu < \infty.$
Using Lemma \ref{better integrability} and
iterating the process to the interpolates
$\bm{\tilde \rho}_{\tau_m}(t_m), \bm{
	\rho}_{\tau_m}^{k_m-1}, \bm{
	\rho}_{\tau_m}^{k_m-2}, \ldots, \bm{
	\rho}_{\tau_m}^0$ we get
\begin{align}\label{contra2}
\si \inte \vu(|x|^2) \tilde \rho_{\tau_m,i}(t_m) \ dx
\leq e^{C_0T}\si \inte \vu(|x|^2) \rho_{i}^{0} \ dx
+ C_0Te^{C_0T}.
\end{align}
In view of \eqref{contra}, \eqref{contra2} and the
convergence to the Dirac mass we get a
contradiction. This establishes the uniform
entropy bound.

One more application of Lemma \ref{cvf} with the measures $\mu_m = (T\beta_i)^{-1}\rho_{\tau_m,i} dxdt$ and the vector fields $(v_m ,1)$ where
\begin{align*}
 v_{m} = \frac{\nabla \rho_{\tau_m,i}}{\rho_{\tau_m,i}} 
 - \sj a_{ij}\nabla u_{\tau_m,j}, 
\end{align*}
and using the semi-continuity \eqref{lscv} we obtain 
\begin{align} \label{ineq}
 &\int_{0}^{T}  \inte \left|\frac{\nabla \rho_{i}(t)}{\rho_{i}(t)} 
 - \sj a_{ij}\nabla u_{j}(t)\right|^2 \rho_{i}(t) \ dxdt \notag\\
 &\leq \liminf_{m\rightarrow +\infty}
 \int_{0}^{T}  \inte \left|\frac{\nabla \rho_{\tau_m,i}(t)}{\rho_{\tau_m,i}(t)} 
 - \sj a_{ij}\nabla u_{\tau_m,j}(t)\right|^2 \rho_{\tau_m,i}(t) \ dxdt.
\end{align}
$\bm{\tilde \rho}_{\tau_m}$ being converge to the same limit as $\bm{\rho}_{\tau_m},$ the inequality \eqref{ineq} holds for $\bm{\tilde \rho}_{\tau_m}$
as well. 
Passing to the limit in the discrete energy identity (Lemma \ref{dei})
and using the lower semi-continuity of $\f$ with respect to the weak $L^1$ convergence we get 
\begin{align*}
 \si\int_{0}^{T}  \inte \left|\frac{\nabla \rho_{i}(t)}{\rho_{i}(t)} 
 - \sj a_{ij}\nabla u_{j}(t)\right|^2 \rho_{i}(t) \ dxdt + \f(\bm{\rho}(T)) \leq \f(\bm{\rho}^0).
\end{align*}
This completes the proof of Theorem \ref{main}(b).
\section{Concentration does occur as time $t\nearrow\infty$} \label{section asymptotic}

This section divulges the behavior of the obtained solution $\bm{\rho}(\cdot, t)$ as time $t \nearrow \infty.$
Recall that $\bm{\rho}(\cdot, t)$ satisfies the free energy 
inequality 
\begin{align*}
 \f(\bm{\rho}(\cdot, t)) + \int_0^t \mathcal{D}_{\f}(\bm{\rho}(\cdot, s)) \ ds \leq \f(\bm{\rho}^0),
\end{align*}
for all $t \in [0, \infty)$ and consequently, the energy $\f(\bm{\rho}(\cdot, t))$ is bounded above by $\f(\bm{\rho}^0)$ for all $t \in [0, \infty).$ Moreover, it follows from the weak formulation (Definition \ref{weak sol}) that the second moment is conserved in time. Notice that the obtained solution in 
the previous section already satisfies the conservation of second moment. Indeed, applying de la Velle\'{e} Poussin's lemma, we confirm that $|x|^2\bm{\rho}_{\tau_m}(t)$ are equi-integrable, and hence the second moment is conserved: $M_2(\bm{\rho}(t)) = M_2(\bm{\rho}^0)$.
By Theorem \ref{mms existance thm} (and remark \ref{mms existence remark}), as $t_m \nearrow \infty$
either $\bm{\rho}(\cdot, t_m)$ converges to $\bm{\beta} \delta_{\bm{0}}$ or the entropy $\mathcal{H}(\bm{\rho}(\cdot, t_m))$ remains uniformly bounded.
In the following, we will show that the later situation is inconceivable.

\begin{Lem}\label{final concentration lemma}
 Assume $A$ and $\bm{\beta}$ satisfies the Assumption \ref{AssLambda} and $\bm{\rho}^0 \in \ga.$ Given any free energy solution $\bm{\rho}$ to \eqref{kss} we have 
 \begin{align*}
  \lim_{t \rightarrow \infty} \bm{\rho}(\cdot, t) = \bm{\beta} \delta_{\bm{0}},
 \end{align*}
where the convergence is in the sense of weak* convergence of measures.
\end{Lem}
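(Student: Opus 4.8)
The plan is to use the subsequence principle: it suffices to prove that every sequence $t_m\nearrow\infty$ has a subsequence along which $\bm{\rho}(\cdot,t_m)\overset{\ast}{\rightharpoonup}\bm{\beta}\delta_{\bm{0}}$; since the only possible limit is $\bm{\beta}\delta_{\bm{0}}$, this forces $\bm{\rho}(\cdot,t)\to\bm{\beta}\delta_{\bm{0}}$ as $t\to\infty$. Because $\bm{\beta}$ is critical, $\f$ is bounded below on $\ga$ by $C_{\mbox{\tiny{LHLS}}}(\bm{\beta})$ (Proposition \ref{propertyf}(a)), so the free energy inequality gives $\int_0^\infty \mathcal{D}_{\f}(\bm{\rho}(\cdot,s))\,ds \le \f(\bm{\rho}^0)-C_{\mbox{\tiny{LHLS}}}(\bm{\beta}) < \infty$, while $\f(\bm{\rho}(\cdot,t))\le\f(\bm{\rho}^0)$ and $M_2(\bm{\rho}(\cdot,t))=M_2(\bm{\rho}^0)$ for all $t$. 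Moreover $\bm{\rho}$ is locally $\tfrac12$-Hölder continuous with respect to $\dw$, a property inherited by the limit curve from Lemma \ref{eti}(c).

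Fix $t_m\nearrow\infty$. Since $\f(\bm{\rho}(\cdot,t_m))$ and $M_2(\bm{\rho}(\cdot,t_m))$ are bounded, Theorem \ref{mms existance thm} and Remark \ref{mms existence remark} give, after passing to a subsequence, that either $\bm{\rho}(\cdot,t_m)\overset{\ast}{\rightharpoonup}\bm{\beta}\delta_{\bm{0}}$ — and then we are done — or $\mathcal{H}(\bm{\rho}(\cdot,t_m))$ is bounded. Assume the latter; we will reach a contradiction. By Dunford--Pettis (the entropy bound together with the moment bound yields equi-integrability, the moment bound yields tightness) we may assume $\bm{\rho}(\cdot,t_m)\rightharpoonup\bm{\bar\rho}$ weakly in $L^1(\rt)$, where $\bar\rho_i\in L^1_+(\rt)$, $\inte\bar\rho_i\,dx=\beta_i$ (mass is conserved in the limit by tightness), $\mathcal{H}(\bm{\bar\rho})<\infty$ and $M_2(\bm{\bar\rho})<\infty$; in particular $\bm{\bar\rho}\in\ga$ is not a Dirac measure. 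Now pick a nearby time with small dissipation: set $\delta_m:=\int_{t_m}^\infty\mathcal{D}_{\f}(\bm{\rho}(\cdot,s))\,ds\to 0$ and $\varepsilon_m:=\sqrt{\delta_m}\to 0$. Since the average of $\mathcal{D}_{\f}(\bm{\rho}(\cdot,\cdot))$ over $[t_m,t_m+\varepsilon_m]$ is at most $\delta_m/\varepsilon_m=\sqrt{\delta_m}$, there is $s_m\in[t_m,t_m+\varepsilon_m]$ with $\mathcal{D}_{\f}(\bm{\rho}(\cdot,s_m))\le\sqrt{\delta_m}\to 0$, and $\dw(\bm{\rho}(\cdot,s_m),\bm{\rho}(\cdot,t_m))\le C\varepsilon_m^{1/2}\to 0$; hence $\bm{\rho}(\cdot,s_m)\to\bm{\bar\rho}$ in $\dw$, in particular weak$^{\ast}$.

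As $\f(\bm{\rho}(\cdot,s_m))\le\f(\bm{\rho}^0)$ and $M_2(\bm{\rho}(\cdot,s_m))=M_2(\bm{\rho}^0)$, the dichotomy of Theorem \ref{mms existance thm} applies again to $\{\bm{\rho}(\cdot,s_m)\}$; concentration is impossible since its weak$^{\ast}$ limit $\bm{\bar\rho}$ is not a Dirac mass, so $\mathcal{H}(\bm{\rho}(\cdot,s_m))$ is bounded and $\bm{\rho}(\cdot,s_m)\rightharpoonup\bm{\bar\rho}$ weakly in $L^1(\rt)$. By the lower semicontinuity of $\mathcal{D}_{\f}$ under weak $L^1$ convergence — which follows from the compactness of vector fields Lemma \ref{cvf} (inequality \eqref{lscv}) together with the strong $L^2_{loc}$ convergence of the Newtonian potentials, exactly as in the derivation of \eqref{ineq} — we conclude $\mathcal{D}_{\f}(\bm{\bar\rho})\le\liminf_m\mathcal{D}_{\f}(\bm{\rho}(\cdot,s_m))=0$. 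Then $\nabla\bar\rho_i=\bar\rho_i\sum_j a_{ij}\nabla\bar u_j$ a.e., and (after a standard argument showing $\bar\rho_i>0$ on all of $\rt$, using continuity of the potentials $\bar u_j$) $\bar\rho_i=c_i\exp\big(\sum_j a_{ij}\bar u_j\big)$ with $c_i>0$; that is, $\bm{\bar\rho}\in\ga$ is a stationary solution of \eqref{kss} of critical mass. Because $\inte\bar\rho_j\,dx=\beta_j$ and $M_2(\bar\rho_j)<\infty$ (so $\bar\rho_j$ has a finite first moment), $\bar u_j(x)=-\tfrac{\beta_j}{2\pi}\ln|x|+O(1)$ as $|x|\to\infty$, hence $\bar\rho_i(x)\asymp|x|^{-\gamma_i}$ with $\gamma_i=\tfrac1{2\pi}\sum_j a_{ij}\beta_j$; then $M_2(\bar\rho_i)<\infty$ would force $\gamma_i>4$, i.e. $\sum_j a_{ij}\beta_j>8\pi$, for every $i$, which is impossible since $\si\beta_i(8\pi-\sj a_{ij}\beta_j)=\Lambda_I(\bm{\beta})=0$ with every $\beta_i>0$ (cf. also \cite{SW}). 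This contradicts $\bm{\bar\rho}\in\ga$.

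Hence the bounded-entropy alternative cannot occur: every sequence $t_m\nearrow\infty$ has a subsequence with $\bm{\rho}(\cdot,t_m)\overset{\ast}{\rightharpoonup}\bm{\beta}\delta_{\bm{0}}$, and the lemma follows. I expect the main obstacle to be this last step — excluding a stationary state of finite second moment — together with making precise the lower semicontinuity of $\mathcal{D}_{\f}$ along $\{\bm{\rho}(\cdot,s_m)\}$; the remaining ingredients are the dichotomy of Theorem \ref{mms existance thm} and an elementary use of the finiteness of the dissipation integral to slide from $t_m$ to a nearby $s_m$.
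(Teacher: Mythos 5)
Your overall strategy is sound and is genuinely leaner than the paper's. Where the paper fixes the scale $[t_m,t_m+1]$, translates to $\bm{\rho}^m(\cdot,t)=\bm{\rho}(\cdot,t_m+t)$ on $[0,1]$, establishes uniform (in $m$) entropy, $L^2$, Fisher-information, H\"older and $H^1_{loc}/H^2_{loc}$ estimates for the whole family of curves, and then passes to a limiting curve whose a.e.\ time slice solves the Liouville system, you instead pick by a mean-value argument a single time $s_m$ near $t_m$ at which the pointwise dissipation $\mathcal{D}_\f(\bm{\rho}(\cdot,s_m))$ already tends to $0$, and pass to the limit at those times only. This collapses the paper's Steps 3--5 into a pointwise compactness argument: the small dissipation at $s_m$ gives (via an inequality of the type \eqref{infm}) a Fisher-information and hence $L^2$ bound at $s_m$, which furnishes the $H^2_{loc}$ bound on the potentials, and Lemma \ref{cvf} then identifies the limiting vector field as $0$. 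The endgame (a stationary Liouville state with finite second moment would force $\sj a_{ij}\beta_j>8\pi$ for all $i$, contradicting $\Lambda_I(\bm{\beta})=0$) is the same as the paper's Step 6, and equally relies on the CSW asymptotics.

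There is, however, one genuine gap. You justify $\bm{\rho}(\cdot,s_m)\to\bm{\bar\rho}$ by invoking the $\tfrac12$-H\"older continuity of $t\mapsto\bm{\rho}(\cdot,t)$ in $\dw$, ``inherited by the limit curve from Lemma~\ref{eti}(c).'' But Lemma~\ref{eti}(c) is an estimate on the MM-scheme interpolates $\bm{\rho}_\tau$, and the H\"older property it gives pertains to the specific solution constructed in Section~\ref{section global existence}. Lemma~\ref{final concentration lemma} asserts concentration for \emph{any} free energy solution, and uniqueness is not proved in this paper, so you cannot transfer that estimate. The paper is explicit about this point and, in its Step~2, derives from the weak formulation and the finiteness of $\int_0^\infty\mathcal{D}_\f\,ds$ a $\tfrac12$-H\"older estimate in $\dwo$ (Kantorovich--Rubinstein), valid for every free energy solution: $\dwo(\bm{\rho}(\cdot,t_0),\bm{\rho}(\cdot,t_1))\le \mathcal{C}|t_0-t_1|^{1/2}$. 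You need exactly this (and it suffices, since $\dwo$ controls weak$^*$ convergence) to conclude that $\bm{\rho}(\cdot,s_m)$ has the same weak$^*$ limit $\bm{\bar\rho}$ as $\bm{\rho}(\cdot,t_m)$. With that repair the rest of your argument goes through and is, if anything, shorter than the paper's.

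Two smaller remarks. First, since $\mathcal{D}_\f$ is only known to be finite for a.e.\ $t$, your mean-value selection should pick $s_m$ from a positive-measure set on which $\mathcal{D}_\f$ is defined and small; this is routine but worth saying. Second, the ``lower semicontinuity of $\mathcal{D}_\f$'' you invoke is really, as you note, the combination of Lemma~\ref{cvf} with the identification of the limiting vector field; in the pointwise-in-time setting that identification needs the $L^2$ bound at $s_m$ (to get weak $L^2$ convergence of $\rho_i(\cdot,s_m)$ and $H^2_{loc}$ compactness of $u_j(\cdot,s_m)$), which is exactly what your choice of $s_m$ with small dissipation provides; spelling this out would make the step airtight.
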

\begin{proof}
 Assume by contradiction that there exists a sequence $t_m\nearrow \infty$ as $m \rightarrow \infty$ such that 
 \begin{align} \label{ent asump}
\sup_{m \in \mathbb{N}}\ \mathcal{H}(\bm{\rho}(\cdot, t_m)) < \infty.  
 \end{align}
 Passing to a subsequence
 we may assume that $t_{m+1} - t_m > 1$ for all $m.$ In the proof, we may have to pass to a further subsequence quite often, and for the simplicity of notations, 
 we will not distinguish between the original sequence and its subsequences. Since in the critical case the second moment is conserved in time, the measures $\bm{\rho}(\cdot, t_m)$ are tight.
 Moreover, since the entropy is also uniformly bounded, by Dunford-Pettis theorem there exists $\bm{\rho}^{\infty} \in \ga$ such that $\bm{\rho}(\cdot, t_m)
 \rightharpoonup \bm{\rho}^{\infty}(\cdot)$ weakly in $(L^1(\rt))^n.$ In addition, we have
 \begin{align}\label{fsm}
  0 < M_2(\bm{\rho}^{\infty}) \leq \liminf_{m \rightarrow \infty} M_2(\bm{\rho}(\cdot, t_m)) = M_2(\bm{\rho}^0) < \infty.
 \end{align}
 
 Let us set 
 \begin{align*}
  \bm{\rho}^m(x,t) = \bm{\rho}(x, t_m + t), \ \ for \ x \in \rt \ and \  t \in [0,1].
 \end{align*}

In the following we will show that $\bm{\rho}^m$ converges, in some sense, to a steady state of \eqref{kss} having finite second moment. To do that 
we need to obtain uniform estimates on entropy and the Fisher information of $\bm{\rho}^m$ all over again. Because all the estimates obtained earlier are local 
in time. Most importantly, we need to obtain uniform H\"{o}lder estimates in time. We know that for the solution obtained in subsection \ref{Gexistence}, $t \mapsto \bm{\rho}(\cdot, t)$ is $\frac{1}{2}$-H\"{o}lder continuous 
with respect to the 2-Wasserstein distance. Since we are considering any free energy solution (and we did not prove the uniqueness) we can not avail that information.
However, in the next few steps, we will 
show that if we replace the $2$-Wasserstein distance by $1$-Wasserstein distance then we have a global H\"{o}lder estimate. We divide the proof into several steps.
Any universal constant independent of $m$ will be denoted by $\mathcal{C}.$

\vspace{0.2 cm}

\noindent
{\bf Step 1:} Uniform $L^2$ and Fisher information bound:
\begin{align*}
 \si \int_0^1 \inte (\rho_i^m(x,t))^2 \ dxdt + \si \int_0^1 \inte \frac{|\nabla \rho_i^m(x,t)|^2}{\rho_i^m(x,t)} \ dxdt \leq \mathcal{C}. 
\end{align*}
Since $\bm{\beta}$ is critical $\f(\bm{\rho}(\cdot, t))$ is bounded from below uniformly with respect to $t.$ Using the free energy inequality we conclude that the dissipation 
of the free energy is integrable, i.e.,
\begin{align}\label{dof integrable}
 \lim_{t \rightarrow \infty} \int_0^t \mathcal{D}_{\f}(\bm{\rho}(\cdot, s)) \ ds \leq \f(\bm{\rho}^0) - \liminf_{t \rightarrow \infty}\f(\bm{\rho}(\cdot, t)) \leq \mathcal{C}.
\end{align}
As a consequence of \eqref{dof integrable} and the assumption $t_{m+1} - t_m >1$ for all $m$ we deduce
\begin{align} \label{identify 0 limit}
\lim_{m \rightarrow \infty} \int_{t_m}^{t_m + 1} \mathcal{D}_{\f}(\bm{\rho}(\cdot, s)) \ ds =
\lim_{m \rightarrow \infty} \int_{0}^{1} \mathcal{D}_{\f}(\bm{\rho}^m(\cdot, s)) \ ds = 0.
\end{align}
The inequality \eqref{identify 0 limit} together with \eqref{infm} establishes the Fisher information bound 
\begin{align*}
 \si \int_0^1 \inte \frac{|\nabla \rho_i^m(x,t)|^2}{\rho_i^m(x,t)} \ dxdt \leq \mathcal{C}.
\end{align*}
By \cite[Lemma $2.1$]{FM} any $L^p$-norm can be controlled by the Fisher information and in particular
\begin{align*}
  \si \int_0^1 \inte (\rho_i^m(x,t))^2 \ dxdt \leq \mathcal{C}.
\end{align*}

\vspace{0.2 cm}

\noindent
{\bf Step 2.} Uniform H\"{o}lder estimate: $ \dwo(\bm{\rho}(\cdot, t_0), \bm{\rho}(\cdot, t_1)) \leq \mathcal{C}|t_0 - t_1|^{\frac{1}{2}},$ for all $t_0,t_1 \in [0, \infty).$

\vspace{0.2 cm}

Fix $t_0 < t_1.$ For simplicity of presentation we abbreviate the equation \eqref{kss} as $\partial_t \rho_i = \nabla \cdot (\rho_i \nabla \frac{\partial \f 
(\bm{\rho}(\cdot, t))}{\partial \rho_i}).$ 
So that the weak formulation can be reformulated as 
\begin{align} \label{qa}
 \int_{t_0}^{t_1}\inte \partial_t \xi \rho_i = \int_{t_0}^{t_1} \inte \nabla_x \xi \cdot \left(\nabla_x\frac{\partial \f (\bm{\rho}(\cdot, t))}{\partial \rho_i}\right) \rho_i.
\end{align}
for every $\xi \in C_c^1((t_0,t_1) \times \rt).$
We apply test functions of the form $\xi(x,t) = h(t)\psi(x)$ in \eqref{qa}, where $h \in C_c^1(t_0,t_1)$ and $\psi \in C_c^1(\rt)$ and set $f_i(t) = \inte \psi(x) \rho_i(x,t) \ dx.$
Then we can rewrite \eqref{qa} 
\begin{align*}
 \int_{t_0}^{t_1} h^{\prime}(t)f_i(t) \ dt = \int_{t_0}^{t_1} h(t) \inte \nabla_x \psi \cdot \left(\nabla_x\frac{\partial \f (\bm{\rho}(\cdot, t))}{\partial \rho_i}\right) \rho_i.
\end{align*}
Hence $f_i^{\prime}(t) = -\inte\nabla_x \psi \cdot \left(\nabla_x\frac{\partial \f (\bm{\rho}(\cdot, t))}{\partial \rho_i}\right) \rho_i$ and moreover, 
$L^2$-norm of $f_i^{\prime}$ can be estimated as 
\begin{align*}
 \int_{t_0}^{t_1} |f_i^{\prime}(t)|^2 \ dt &=  \int_{t_0}^{t_1} \left(\inte \nabla_x \psi \cdot \left(\nabla_x\frac{\partial 
 \f (\bm{\rho}(\cdot, t))}{\partial \rho_i}\right) \rho_i\right)^2 \ dt \\
 &\leq ||\nabla \psi||_{L^{\infty}}^2  \int_{t_0}^{t_1} \left(\inte \left|\nabla \frac{\partial \f (\bm{\rho}(\cdot, t))}{\partial \rho_i}\right|^2 \rho_i\right)
 \left(\inte \rho_i\right) \ dt \\
 & \leq \beta_i||\nabla \psi||_{L^{\infty}}^2 \left(\int_0^{\infty}\mathcal{D}_{\f}(\bm{\rho}(\cdot, t)) \ dt\right)\\
&\leq \mathcal{C}||\nabla \psi||_{L^{\infty}}^2 .
 \end{align*}
Since $\rho_i\in L^2_{loc}((0,\infty); L^2(\rt)),$ we also deduce that $f_i \in L^2(t_0,t_1)$ and hence $f_i \in W^{1,2}(t_0,t_1).$ As a consequence, 
$f_i$ are absolutely continuous and by fundamental theorem of calculus
\begin{align}\label{take sup}
 |f_i(t_0) - f_i(t_1)| \leq \int_{t_0}^{t_1} |f_i^{\prime}(t)| \ dt \leq ||f_i^{\prime}||_{L^2(t_0,t_1)} |t_0 - t_1|^{\frac{1}{2}} \leq \mathcal{C} ||\nabla \psi||_{L^{\infty}} 
 |t_0 - t_1|^{\frac{1}{2}}.
\end{align}
Taking the supremum in \eqref{take sup} over all $\psi \in C_c^1(\rt)$ satisfying $||\nabla \psi||_{\infty} \leq 1$ and using Kantorovich duality \eqref{w1dual} we get the desired result: 
\begin{align*}
 \dwso(\rho_i(\cdot, t_0), \rho_i(\cdot, t_1)) = \beta_i^{-\frac{1}{2}} \sup_{||\nabla \psi||_{L^{\infty}} \leq 1} |f_i(t_0) - f_i(t_1)| \leq \mathcal{C}|t_0 - t_1|^{\frac{1}{2}}.
\end{align*}

Applying step 2 to the sequence $\bm{\rho}^m$ we obtain $\dwo(\bm{\rho}^m(\cdot, t), \bm{\rho}^m(\cdot, s)) \leq \mathcal{C}|t-s|^{\frac{1}{2}}$ for all $s,t \in [0,1].$

\vspace{0.2 cm}

\noindent
{\bf Step 3:} Uniform entropy bound: $\sup_{t \in [0,1]} \mathcal{H}(\bm{\rho}^m(\cdot, t)) \leq \mathcal{C}.$

\vspace{0.2 cm}

We evaluate the time derivative of the entropy
\begin{align*}
 \frac{d \mathcal{H}(\bm{\rho}_i^m(\cdot, t))}{dt} &= \inte (1 + \ln \rho_i^m) \partial_t \rho_i^m \ dx\\
 &= - \inte \nabla \rho_i^m \cdot \left(\nabla \frac{\partial \f(\bm{\rho}^m(\cdot, t))}{\partial \rho_i}\right) \ dx \\
 & \leq \frac{1}{2} \inte \frac{|\nabla \rho_i^m|^2}{\rho_i^m} + \frac{1}{2}\inte \left| \nabla\frac{\partial \f(\bm{\rho}^m(\cdot, t))}{\partial \rho_i}\right|^2\rho_i^m.
\end{align*}
For $t \in [0, 1]$ integrating the above inequality from $0$ to $t$ we get
\begin{align*}
 \mathcal{H}(\rho_i^m(\cdot, t)) \leq \mathcal{H}(\rho_i^m(\cdot, 0)) + \frac{1}{2}\int_0^1\inte \frac{|\nabla \rho_i^m|^2}{\rho_i^m} \ dx ds + \frac{1}{2}\int_0^{\infty}
 \mathcal{D}_{\f}(\bm{\rho}(\cdot, s)) \ ds \leq \mathcal{C}.
\end{align*}
In the last inequality we have used the hypothesis \eqref{ent asump} and step 1.

In view of step $1$ and step $3$ we also have 

\vspace{0.2 cm}

\noindent
{\bf Step 4:} For any $R > 0$ the Newtonian potential $u_i^m(x,t) := -\frac{1}{2\pi} \ln |\cdot| \star \rho_i^m(\cdot, t)$ satisfies
\begin{align*}
 \sup_{t \in [0,1]} ||u_i^m(\cdot, t)||_{H^1(B_R)} + \int_0^1 ||u_i^m(\cdot, t)||_{H^2(B_R)}^2 \ dt \leq \mathcal{C}, \ for \ all \ i \in I.
\end{align*}

It is essentially Lemma $4.4$ and Lemma $6.1$ of \cite{KW19}. We skip the proof and refer the reader to \cite{KW19} for details. 

\vspace{0.2 cm}

\noindent
{\bf Step 5:} Identifying the limit.

\vspace{0.2 cm}

We define $\mu_i^m = \beta_i^{-1} \rho_i^m(x,t) dxdt$ and $v_i^m (x,t) = \frac{\nabla \rho_i^m(x,t)}{\rho_i^m} - \sj a_{ij} \nabla u_j^m(x,t)$ for $x \in \rt, t \in [0,1]$
and $\tilde v_i^m = (v_i^m, 1).$
By step $1,$ $\sup_m ||\tilde v_i^m||_{L^2((0,1)\times \rt, \mu_i^m; \mathbb{R}^3)} < \infty$ and moreover, using \eqref{identify 0 limit} we see that 
\begin{align}\label{os0}
 || v_i^m||_{L^2((0,1)\times \rt, \mu_i^m; \mathbb{R}^2)} \rightarrow 0, \ \ as \ m \rightarrow \infty.
\end{align}

In view of step $1$ - step $4,$ we can invoke refined Arzel\'{a}-Ascoli's lemma \cite[Proposition $3.3.1$]{AGS} and the arguments used in section \ref{section global existence},
to conclude that $\rho_i^m(\cdot,t) \rightharpoonup \rho_i^{\infty}(\cdot, t)$ weakly in $L^1(\rt)$ for every $t \in [0,1],$ $\rho_i^m \rightharpoonup \rho_i^{\infty}$ weakly in $L^2((0,1)\times \rt)$ and $u_i^m \rightarrow u_i^{\infty}$
strongly in $L^2((0,1); H^1_{loc}(\rt)).$ Applying Lemma \ref{cvf} to $\mu_i^m$ and $v_i^m$ we find the existence of a vector field  
$v_i^{\infty} \in L^2((0,1)\times \rt, \rho_i^{\infty};\rt)$ such that
\begin{align*}
 \int_0^1 \inte \zeta \cdot v_i^m \rho_i^m \ dxdt \rightarrow \int_0^1 \inte \zeta \cdot v_i^{\infty} \rho_i^{\infty} \ dxdt, \ \ for \ all \ \zeta \in 
 C_c^{\infty}((0,1)\times \rt).
\end{align*}

Taking into account the convergence results mentioned above, we can identify the vector fields $v_i^{\infty}$ through
\begin{align}\label{os}
 \int_0^1\inte \zeta \cdot v_i^{\infty} \rho_i^{\infty} = \lim_{m \rightarrow +\infty}\int_0^1\inte \zeta \cdot v_i^m\rho_i^m 
= \int_0^1\inte \zeta \cdot \left(\nabla \rho_{i}^{\infty}
 - \sj a_{ij}\nabla u_{j}^{\infty} \rho_{i}^{\infty}\right).
\end{align}
On the other hand, as a consequence of \eqref{os0} $v_i^{\infty} \equiv 0$ and hence from \eqref{os} we conclude
\begin{align} \label{concluding}
 \int_0^1\inte \zeta \cdot \left(\nabla \rho_{i}^{\infty}
 - \sj a_{ij}\nabla u_{j}^{\infty} \rho_{i}^{\infty}\right) = 0, \ \ for \ all \ \zeta \in C_c^{\infty}((0,1)\times \rt).
\end{align}

\vspace{0.2 cm}

\noindent 
{\bf Step 6:} Concluding the proof.

\vspace{0.2 cm}

The equation \eqref{concluding} is equivalent to saying $\rho_i^{\infty}(x,t) = \frac{\beta_i e^{\sj a_{ij}u_j^{\infty}(x,t)}}{\inte e^{\sj a_{ij}u_j^{\infty}(z,t)} \ dz}$
and $u_i^{\infty}$ satisfies the Liouville system 
\begin{align*}
 -\Delta_x u_i^{\infty}(x,t) = \frac{\beta_i e^{\sj a_{ij}u_j^{\infty}(x,t)}}{\inte e^{\sj a_{ij}u_j^{\infty}(z,t)} \ dz} \ \ \ \ in \ \rt.
\end{align*}

It follows from a result of Chipot, Shafrir and the second author \cite[Lemma $3.1$, Proposition $3.1$]{CSW} that $u_i^{\infty}$ has the asymptotic behaviour
\begin{align*}
 \left|\sj a_{ij}u_j^{\infty}(x,t) + \frac{1}{2\pi} \sj a_{ij}\beta_j \ln |x|\right| = O(1) \ \ \ \ if \ |x| > R \ is \ large. 
\end{align*}
As a result
\begin{align*}
 \inte |x|^2\rho_i^{\infty}(x,0) \ dx  &= \inte |x|^2  \frac{\beta_i e^{\sj a_{ij}u_j^{\infty}(x,0)}}{\inte e^{\sj a_{ij}u_j^{\infty}(z,0)} \ dz} \ dx \\
 &  \geq e^{O(1)}\int_{\{|x| \geq R\}} |x|^{(2 - \frac{1}{2\pi}\si a_{ij}\beta_j)} \ dx.
\end{align*}
Since, by \eqref{fsm}, the second moment is finite, we must have $2 - \frac{1}{2\pi}\si a_{ij}\beta_j < -2$ i.e., $\sj a_{ij}\beta_j > 8\pi$ for all $i \in I.$
But according to our assumption 
\begin{align*}
 0 = \Lambda_I(\bm{\beta}) = \si \beta_i (8 \pi - \sj a_{ij}\beta_j) < 0.
\end{align*}
This contradiction assures \eqref{ent asump} is not possible. In view of the energy bound and Theorem \ref{mms existance thm}, remark \ref{mms existence remark}, $\bm{\rho}^m$ must concentrate, and hence the proof of the theorem is completed.
\end{proof}

\section{Appendix}\label{section appendix}

This last section has been devoted to the proof of Lemma \ref{better integrability}, and further we recall a compactness result which has been used frequently in this article.
Recall that $\Psi_R$ is the smooth cut off function introduced in \eqref{cut off} and $\vu$ is the de la Vell\'{e}e Poussin convex function satisfying all 
the properties in \eqref{p}. Furthermore, we have the following estimates: 
the concavity of $\vu^{\prime}$ and $\vu^{\prime}(0) = 0$ implies
\begin{align*}
r\vu^{\prime \prime}(r) \leq \frac{\vu(r)}{r}.
\end{align*}
By smoothness of $\vu,$ it is not difficult to see that 
\begin{align*}
\vu^{\prime}(r) \leq 2\frac{\vu(r)}{r} \leq 2\left(c_1 \vu(r) + c_2\right).
\end{align*}
In the proof we are going to use these estimates frequently. Any universal constant will be denoted by 
$C_1, C_2.$

\vspace{0.2 cm}

\noindent
{\bf Proof of Lemma \ref{better integrability}:}
\begin{proof}
	As before the central idea is to use the test function $\psi_R(x) = \vu(|x|^2)\Psi_R(x)$ in the Euler-Lagrange equation Lemma \ref{el}(d). We estimate one by one:
	\begin{align*}
	&\nabla \psi_R(x) = 2x \vu^{\prime}(|x|^2) \Psi_R(x) + \vu(|x|^2) \nabla \Psi_R(x),\\
	&\Delta \psi_R(x) = 4 \vu^{\prime}(|x|^2) \Psi_R(x) + 4|x|^2\vu^{\prime\prime}(|x|^2)\Psi_R(x) \\
	& \hspace{5 cm} + 4\vu^{\prime}(|x|^2) (x \cdot \nabla\Psi_R(x)) + \vu(|x|^2)\Delta \Psi_R(x). 
	\end{align*}
	It follows from the properties of $\vu$ mentioned above and that $\sup_x |x\nabla \Psi_R(x)| = O(1)$
	\begin{align*}
	|\Delta \psi_R(x)| \leq C_1 \vu(|x|^2) + C_2.
	\end{align*}
	As a result 
	\begin{align*}
	\Big|\si \inte \Delta\psi_R(x)\vri (x)\Big| \leq C_1 \si \inte \vu(|x|^2)\vri + C_2.
	\end{align*}	
	Now we can write
	\begin{align*}
	\inte \inte \frac{(\nabla \psi_R(x) - \nabla \psi_R(y))\cdot(x-y)}{|x-y|^2} \vri(x)\vrj(j) \ dxdy = I_1 + I_2 + I_3 + I_4
	\end{align*}
	where 
	\begin{align*}
	&I_1 = \inte \inte \frac{2 \left(x \vu^{\prime}(|x|^2) - y \vu^{\prime}(|y|^2)\right)\cdot (x-y)}{|x-y|^2}\Psi_R(x) \vri(x)\vrj(y) \ dxdy, \\
	&I_2 = \inte \inte \frac{2y \cdot (x-y)}{|x-y|^2}\vu^{\prime}(|y|^2)(\Psi_R(x) - \Psi_R(y))\vri(x)\vrj(y)\ dxdy, \\
	&I_3 = \inte \inte \frac{(\nabla \Psi_R(x) - \nabla \Psi_R(y)) \cdot (x-y)}{|x-y|^2}\vu(|x|^2)\vri(x)\vrj(y) \ dxdy, \\
	&I_4 = \inte \inte \frac{(x-y)\cdot \nabla \Psi_R(y) }{|x-y|^2} (\vu(|x|^2) - \vu(|y|^2))\vri(x)\vrj(y) \ dxdy.
	\end{align*}
	
	The integrals $I_2$ and $I_3$ are easy to estimate:
	\begin{align*}
	|I_2 + I_3| \leq C_1 \si \inte \vu(|x|^2)\vri + C_2.
	\end{align*}
	To estimate $I_4$ we use the following 
	\begin{align*}
	|\vu(|x|^2) - \vu(|y|^2)| &= |\int_0^1 \frac{d}{ds} \vu(|sx + (1-s)y|^2)| \ ds\notag \\
	&\leq 2|x-y|\int_0^1 |sx + (1-s)y| \vu^{\prime}(|sx + (1-s)y|^2) \ ds.
	\end{align*}
	As a result 
	\begin{align*}
	& |I_4| \leq 2\inte \inte \int_0^1 |sx + (1-s)y| \vu^{\prime}(|sx + (1-s)y|^2)|\nabla \Psi_R(y)|\vri(x)\vrj(y) \ dsdxdy \notag \\
	&\leq 2\int_0^1 \int\int_{\{|sx + (1-s)y| \leq 1\}} |sx + (1-s)y| \vu^{\prime}(|sx + (1-s)y|^2)|\nabla \Psi_R(y)|\vri(x)\vrj(y) \ dxdyds \notag \\
	& \ \ + 2\int_0^1 \int\int_{\{|sx + (1-s)y| > 1\}} |sx + (1-s)y| \vu^{\prime}(|sx + (1-s)y|^2)|\nabla \Psi_R(y)|\vri(x)\vrj(y) \ dxdyds \notag \\
	&\leq 2\int_0^1 \inte\inte \vu^{\prime}(|sx + (1-s)y|^2)|\nabla \Psi_R(y)|\vri(x)\vrj(y) \ dxdy ds\notag \\
	& \ \ +4\int_0^1 \int\int_{\{|sx + (1-s)y| > 1\}} \vu(|sx + (1-s)y|^2)|\nabla \Psi_R(y)|\vri(x)\vrj(y) \ dxdy ds\notag \\
	&\leq C_1\int_0^1 \inte \inte \vu(|sx + (1-s)y|^2)|\nabla \Psi_R(y)|\vri(x)\vrj(y) \ dxdyds + C_2\notag \\
	&\leq C_1\si \inte \vu(|x|^2)\vri(x) \ dx + C_2.
	\end{align*}
	In the third inequality we have used $s \vu^{\prime}(s^2) \leq2 s \frac{\vu(s^2)}{s^2} \leq 2\vu(s^2)$ provided $s>1.$ The last inequality follows from the convexity of $|x|^2$ and $\vu$ and the monotonicity of $\vu.$ Finally, to estimate $I_1$ we use the following
	\begin{align*}
	\left(x\vu^{\prime}(|x|^2) - y \vu^{\prime}(|y|^2)\right)\cdot (x-y) = & \ \frac{1}{2}|x-y|^2(\vu^{\prime}(|x|^2) + \vu^{\prime}(|y|^2)) \\
	&+ \frac{1}{2}(|x|^2 - |y|^2) \left(\vu^{\prime}(|x|^2) - \vu^{\prime}(|y|^2)\right) \\
	\geq & \ \frac{1}{2}|x-y|^2(\vu^{\prime}(|x|^2) + \vu^{\prime}(|y|^2)),
	\end{align*}
	where in the second line we used the convexity of $\vu.$ Using $\psi_R$ as a test function, the right hand side of the Euler-Lagrange equation Lemma \ref{el}(d) 
	can be estimated as follows:
	\begin{align*}
	&\si\inte  \Delta \psi_R(x) \vri(x)dx - \si\sj \frac{a_{ij}}{4\pi} \inte \inte  \frac{(\nabla \psi_R(x) - \nabla \psi_R(y))\cdot (x-y)}{|x-y|^2}\vri(x)\varrho_j(y)dxdy \notag \\
	&\leq  \ C_1\si \inte \vu(|x|^2)\vri(x) \ dx + C_2 \notag \\
	& \ -\si \sj \frac{a_{ij}}{4\pi}\inte \inte \frac{1}{2}(\vu^{\prime}(|x|^2) + \vu^{\prime}(|y|^2))\Psi_R(x)\vri(x) \vrj(y) \ dxdy \notag\\
	&\leq  \ C_1\si \inte \vu(|x|^2)\vri(x) \ dx + C_2
	\end{align*}
	While the left hand side of Lemma \ref{el}(d) can be written as 
	\begin{align} \label{lhs of el}
	&\frac{1}{\tau}\si\inte (\nabla \varphi_i(x)-x) \cdot  \nabla\psi_R(\nabla\varphi_i(x))\eta_i(x)dx \notag \\
	=& \ \frac{1}{\tau} \left[\si \inte (x \cdot \nabla\psi_R(x))\vri(x)\ dx  - \si \inte (x \cdot \nabla \psi_R(\nabla\varphi_i(x))\eta_i(x) \ dx \right] \notag \\
	= & \ \frac{2}{\tau} \left[ \si \inte |x|^2 \vu^{\prime}(|x|^2) \Psi_R(x) \vri(x) \ dx  \right. \notag\\
	&\left. \ \ \ \ \ \ \ \ \ \ \ \ - \si \inte (x \cdot \nabla\varphi_i(x)) \vu^{\prime}(|\nabla \varphi_i(x)|^2) \Psi_R(\nabla \varphi_i(x)) \eta_i(x) \ dx \right] \notag \\
	& \ +\frac{1}{\tau}\left[\si \inte (x \cdot \nabla \Psi_R(x)) \vu(|x|^2) \vri(x) \ dx \right. \notag\\
	&\left. \ \ \ \ \ \ \ \ \ \ \ \ - \si \inte (x \cdot \nabla \Psi_R(\nabla \varphi_i(x)) \vu(|\nabla \varphi_i(x)|^2) \eta_i(x) \ dx\right] \notag \\
	= & \ \frac{2}{\tau} \left[ \si \inte |x|^2\vu^{\prime}(|x|^2) \vri(x) \ dx 
	- \si \inte (x \cdot \nabla\varphi_i(x)) \vu^{\prime}(|\nabla \varphi_i(x)|^2) \eta_i(x) \ dx \right] + o(1)\notag \\
	\end{align}
	as $R \rightarrow \infty.$ In the last line we have used the integrability assumption $\si \vu(|\cdot|^2)(\eta_i + \vri) \in L^1(\rt)$ and the dominated convergence theorem. The justification of passing to the limit will be clear in a moment (see \eqref{e}, \eqref{r} below). For the time being note that $|(x \cdot \nabla\varphi_i(x)) \vu^{\prime}(|\nabla \varphi_i(x)|^2) \Psi_R(\nabla \varphi_i(x))| \leq |x||\nabla \varphi_i(x)|^2|\nabla \Psi_R(\nabla \varphi_i(x))| \frac{\vu(|\nabla \varphi_i(x)|^2)}{|\nabla \varphi_i(x)|^2} = O(|x||\nabla \varphi_i(x)| \vu^{\prime}(|\nabla \varphi_i(x)|^2)).$
	As a consequence, we deduce from the Euler-Lagrange equation with $\psi_R$ as a test function
	\begin{align} \label{t}
	2\si \inte |x|^2\vu^{\prime}(|x|^2) \vri(x) \ dx \leq& \ 2\inte (x \cdot \nabla\varphi_i(x)) \vu^{\prime}(|\nabla \varphi_i(x)|^2) \eta_i(x) \ dx \notag \\ 
	&+ \tau \left(C_1\si \inte \vu(|x|^2)\vri(x) \ dx + C_2\right)
	\end{align}
	Using the inequality $2a \cdot b \leq |a|^2 + |b|^2$ we can estimate the first term on the right hand side of \eqref{t}
	as follows:
	\begin{align} \label{e}
	2\inte (x \cdot \nabla\varphi_i(x)) \vu^{\prime}(|\nabla \varphi_i(x)|^2) \eta_i(x) \ dx 
	\leq & \  \si \inte |\nabla \varphi_i(x)|^2\vu^{\prime}(|\nabla \varphi_i(x)|^2) \eta_i(x) \ dx \notag \\
	&+  \si \inte |x|^2 \vu^{\prime}(|\nabla \varphi_i(x)|^2)\eta_i(x) \ dx. \notag \\
	= & \  \si \inte |x|^2\vu^{\prime}(|x|^2) \vri(x) \ dx \notag \\
	&+  \si \inte |x|^2 \vu^{\prime}(|\nabla \varphi_i(x)|^2)\eta_i(x) \ dx.
	\end{align}
	Let $\vu^*$ be the conjugate convex function to $\vu.$ 
	Then the last term in \eqref{e} can be bound from above by
	\begin{align} \label{r}
	&\si \inte |x|^2 \vu^{\prime}(|\nabla \varphi_i(x)|^2)\eta_i(x) \ dx \notag \\
	\leq& \ 
	\si \inte \vu(|x|^2) \eta_i(x) \ dx + \si \inte  \vu^*(\vu^{\prime}(|\nabla \varphi_i(x)|^2))\eta_i(x) \ dx \notag \\
	=& \ \si \inte \vu(|x|^2) \eta_i(x) \ dx + \si \inte  \vu^*(\vu^{\prime}(|x|^2))\vri(x) \ dx
	\end{align}
	The properties mentioned in \eqref{p} ensures that $\vu^*(\vu^{\prime}(r)) \leq \vu(r)$ for every $r >0$ (see \cite[Lemma $B.1$]
	{LMdela}). As a result, all the terms in \eqref{r} are finite, which also justifies the passing to the limit in \eqref{lhs of el}.
	Plugging \eqref{e} and \eqref{r} into \eqref{t} we get
	\begin{align}\label{last line lemma}
	\si \inte \left[|x|^2\vu^{\prime}(|x|^2) - \vu^*(\vu^{\prime}(|x|^2)) \right]\vri(x) \ dx \leq& \si \inte \vu(|x|^2) \eta_i(x) \ dx \notag \\
	&+ \tau \left(C_1\si \inte \vu(|x|^2)\vri(x) \ dx + C_2\right)
	\end{align}
	Using the properties of $\vu$ mentioned in \eqref{p} it can also be shown that  $r\vu^{\prime}(r) -\vu^* (\vu^{\prime}(r)) = \vu(r)$ (see \cite[Lemma $B.1$]{LMdela}). Thus we obtain from \eqref{last line lemma}
	\begin{align*}
	(1- C_1\tau) \si \inte \vu(|x|^2) \vri(x) \ dx \leq& \si \inte \vu(|x|^2) \eta_i(x) \ dx + C_2\tau,
	\end{align*}
	which is equivalent to the result claimed in the lemma.
\end{proof}

We have used the following compactness of vector fields result whose proof can be found in 
\cite[Theorem $5.4.4$]{AGS}:
\begin{lema}[Compactness of vector fields]\label{cvf}
	Let $\Omega$ be an open set in $\mathbb{R}^N$. If $\{\mu_m\}_m$ is a sequence of probability measures in $\Omega$
    converging to $\mu$ in the weak * topology of measures and $\{v_m\}_m$
	is a sequence of vector fields in $L^2(\Omega, \mu_m;\mathbb{R}^N)$ satisfying 
	\begin{align*}
	\sup_m ||v_m||_{L^2(\Omega,\mu_m;\mathbb{R}^N)} < +\infty,
	\end{align*}
	then there exists a vector field $v \in L^2(\Omega, \mu;\mathbb{R}^N)$ such that 
	\begin{align*}
	\lim_{m\rightarrow \infty} \int_{\Omega} \zeta \cdot v_m \ d\mu_m = \int_{\Omega} \zeta \cdot v \ d\mu, 
	\ \ for \ all \ \zeta 
	\in C_c^{\infty}(\Omega; \mathbb{R}^N)
	\end{align*}
	and satisfy 
	\begin{align} \label{lscv}
	||v||_{L^2(\Omega, \mu; \mathbb{R}^N)} \leq \liminf_{m\rightarrow \infty} ||v_m||_{L^2(\Omega, \mu_m; \mathbb{R}^N)}.
	\end{align}
\end{lema}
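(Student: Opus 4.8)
\textbf{Proof plan for Lemma \ref{cvf}.} The plan is to package each pair $(\mu_m,v_m)$ into a single $\mathbb{R}^N$-valued Radon measure $\sigma_m:=v_m\mu_m$ on $\Omega$ (so that $\int_A\zeta\cdot d\sigma_m=\int_A\zeta\cdot v_m\,d\mu_m$), pass to a weak* limit of the $\sigma_m$, and then identify the limit measure as $v\mu$ via the lower semicontinuity of a convex functional of a measure.

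First I would record a uniform bound on the total variations. Since each $\mu_m$ is a probability measure, the Cauchy--Schwarz inequality gives
\[
|\sigma_m|(\Omega)=\int_\Omega|v_m|\,d\mu_m\le\|v_m\|_{L^2(\Omega,\mu_m;\mathbb{R}^N)}\,\mu_m(\Omega)^{1/2}\le C.
\]
Thus $\{\sigma_m\}$ is bounded in $\mathcal{M}(\Omega;\mathbb{R}^N)$, the dual of the separable space $C_0(\Omega;\mathbb{R}^N)$, so by sequential weak* compactness I may extract a subsequence (not relabelled) with $\sigma_m\overset{*}{\rightharpoonup}\sigma$ for some finite $\mathbb{R}^N$-valued measure $\sigma$; together with $\mu_m\overset{*}{\rightharpoonup}\mu$ this means $(\mu_m,\sigma_m)\to(\mu,\sigma)$ weakly*. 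In particular, once we know $\sigma=v\mu$, the convergence claimed in the lemma is immediate: for $\zeta\in C_c^\infty(\Omega;\mathbb{R}^N)\subset C_c(\Omega;\mathbb{R}^N)$,
\[
\int_\Omega\zeta\cdot v_m\,d\mu_m=\int_\Omega\zeta\cdot d\sigma_m\;\longrightarrow\;\int_\Omega\zeta\cdot d\sigma=\int_\Omega\zeta\cdot v\,d\mu.
\]

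To produce the density $v$ and the semicontinuity bound \eqref{lscv}, I would work with the convex functional
\[
\mathcal{F}(\nu,\varsigma):=\sup_{b\in C_c(\Omega;\mathbb{R}^N)}\Bigl\{\int_\Omega b\cdot d\varsigma-\tfrac14\int_\Omega|b|^2\,d\nu\Bigr\},\qquad \nu\in\mathcal{M}^+(\Omega),\ \varsigma\in\mathcal{M}(\Omega;\mathbb{R}^N).
\]
From the pointwise Legendre identity $|z|^2=\sup_b\bigl(b\cdot z-\tfrac14|b|^2\bigr)$, together with a measurable-selection and approximation argument, $\mathcal{F}(\nu,\varsigma)$ equals $\int_\Omega|d\varsigma/d\nu|^2\,d\nu$ when $\varsigma\ll\nu$ with $L^2$ density and $+\infty$ otherwise (the recession function of $|z|^2$ being $+\infty$ off the origin forces the $\nu$-singular part of $\varsigma$ to vanish whenever the supremum is finite). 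Since for each fixed $b$ the map $(\nu,\varsigma)\mapsto\int b\cdot d\varsigma-\tfrac14\int|b|^2\,d\nu$ is weak* continuous, $\mathcal{F}$ is weak* lower semicontinuous as a supremum of continuous maps. Hence $\mathcal{F}(\mu,\sigma)\le\liminf_m\mathcal{F}(\mu_m,\sigma_m)=\liminf_m\|v_m\|_{L^2(\Omega,\mu_m;\mathbb{R}^N)}^2<\infty$, and finiteness of $\mathcal{F}(\mu,\sigma)$ is exactly the statement that $\sigma\ll\mu$ with density $v:=d\sigma/d\mu\in L^2(\Omega,\mu;\mathbb{R}^N)$ satisfying $\|v\|_{L^2(\Omega,\mu;\mathbb{R}^N)}^2\le\liminf_m\|v_m\|_{L^2(\Omega,\mu_m;\mathbb{R}^N)}^2$, i.e.\ \eqref{lscv}.

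The total-variation bound and the weak* compactness are routine; the only genuinely delicate point, and the one requiring actual work, is the dual representation of $\mathcal{F}$ and its weak* lower semicontinuity — a Reshetnyak-type statement for the convex integrand $|z|^2$, in particular the fact that a finite value of the supremum rules out any part of $\sigma$ singular with respect to $\mu$. This is precisely the content of \cite[Theorem 5.4.4]{AGS}, to which I would ultimately appeal.
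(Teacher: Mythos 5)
Your proposal is correct and is, in substance, the same route the paper takes: the paper gives no proof of its own and simply refers to \cite[Theorem 5.4.4]{AGS}, while you reconstruct that theorem's argument (form the vector-valued measures $v_m\mu_m$, bound them in total variation by Cauchy--Schwarz, extract a weak* limit, and identify its density via the weak* lower semicontinuity of the Legendre-dual representation of $\|\cdot\|_{L^2}^2$), ultimately appealing to the same AGS result. One small point worth flagging: as you note, the argument really produces convergence along a subsequence, which is what AGS states and what the paper actually uses; the statement as printed in the paper, which asserts convergence of the full sequence, is a harmless imprecision.
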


\vspace{0.2 cm}

\noindent
{\it Acknowledgement.} D. Karmakar acknowledges the support of the Department of Atomic Energy, Government of India, under project no. 12-R$\&$D-TFR-5.01-0520.
Part of this research work was completed during the stay of the first author at the Technion as a post-doctoral fellow. He deeply acknowledges the warm hospitality, friendly environment and partial financial support from Technion fellowship. 

Part of this research was completed while the second author was on sabbatical at the  Emory U., USA.

 \label{Bibliography}

\bibliography{KS_system_Optimal_Transport} 

\begin{thebibliography}{BKLN06b}

\bibitem[AGS05]{AGS}
Luigi Ambrosio, Nicola Gigli, and Giuseppe Savar\'e.
\newblock {\em Gradient flows in metric spaces and in the space of probability
  measures}.
\newblock Lectures in Mathematics ETH Z\"urich. Birkh\"auser Verlag, Basel,
  2005.

\bibitem[BCC08]{jko2}
Adrien Blanchet, Vincent Calvez, and Jos\'{e}~A. Carrillo.
\newblock Convergence of the mass-transport steepest descent scheme for the
  subcritical {P}atlak-{K}eller-{S}egel model.
\newblock {\em SIAM J. Numer. Anal.}, 46(2):691--721, 2008.

\bibitem[BCC12]{BCCjfa}
Adrien Blanchet, Eric~A. Carlen, and Jos\'{e}~A. Carrillo.
\newblock Functional inequalities, thick tails and asymptotics for the critical
  mass {P}atlak-{K}eller-{S}egel model.
\newblock {\em J. Funct. Anal.}, 262(5):2142--2230, 2012.

\bibitem[BCM08]{BCM}
Adrien Blanchet, Jos\'{e}~A. Carrillo, and Nader Masmoudi.
\newblock Infinite time aggregation for the critical {P}atlak-{K}eller-{S}egel
  model in {$\mathbb{R}^2$}.
\newblock {\em Comm. Pure Appl. Math.}, 61(10):1449--1481, 2008.

\bibitem[BDEF10]{BDP}
Adrien Blanchet, Jean Dolbeault, Miguel Escobedo, and Javier Fern\'{a}ndez.
\newblock Asymptotic behaviour for small mass in the two-dimensional
  parabolic-elliptic {K}eller-{S}egel model.
\newblock {\em J. Math. Anal. Appl.}, 361(2):533--542, 2010.

\bibitem[BDP06]{BD}
Adrien Blanchet, Jean Dolbeault, and Beno\^{i}t Perthame.
\newblock Two-dimensional {K}eller-{S}egel model: optimal critical mass and
  qualitative properties of the solutions.
\newblock {\em Electron. J. Differential Equations}, pages No. 44, 32, 2006.

\bibitem[BG12]{two4}
Piotr Biler and Ignacio Guerra.
\newblock Blowup and self-similar solutions for two-component drift-diffusion
  systems.
\newblock {\em Nonlinear Anal.}, 75(13):5186--5193, 2012.

\bibitem[BKLN06a]{BT}
Piotr Biler, Grzegorz Karch, Philippe Lauren\c{c}ot, and Tadeusz Nadzieja.
\newblock The {$8\pi$}-problem for radially symmetric solutions of a chemotaxis
  model in a disc.
\newblock {\em Topol. Methods Nonlinear Anal.}, 27(1):133--147, 2006.

\bibitem[BKLN06b]{BKLN}
Piotr Biler, Grzegorz Karch, Philippe Lauren\c{c}ot, and Tadeusz Nadzieja.
\newblock The {$8\pi$}-problem for radially symmetric solutions of a chemotaxis
  model in the plane.
\newblock {\em Math. Methods Appl. Sci.}, 29(13):1563--1583, 2006.

\bibitem[Bre91]{Brenier}
Yann Brenier.
\newblock Polar factorization and monotone rearrangement of vector-valued
  functions.
\newblock {\em Comm. Pure Appl. Math.}, 44(4):375--417, 1991.

\bibitem[CD14]{CD}
Juan~F. Campos and Jean Dolbeault.
\newblock Asymptotic estimates for the parabolic-elliptic {K}eller-{S}egel
  model in the plane.
\newblock {\em Comm. Partial Differential Equations}, 39(5):806--841, 2014.

\bibitem[CEV11]{two3}
Carlos Conca, Elio Espejo, and Karina Vilches.
\newblock Remarks on the blowup and global existence for a two species
  chemotactic {K}eller-{S}egel system in {$\mathbb{R}^2$}.
\newblock {\em European J. Appl. Math.}, 22(6):553--580, 2011.

\bibitem[CP81]{Childress}
Stephen Childress and Jerome~K. Percus.
\newblock Nonlinear aspects of chemotaxis.
\newblock {\em Math. Biosci.}, 56(3-4):217--237, 1981.

\bibitem[CS18]{Lestimate}
Jos\'{e}-Antonio Carrillo and Filippo Santambrogio.
\newblock {$L^\infty$} estimates for the {JKO} scheme in parabolic-elliptic
  {K}eller-{S}egel systems.
\newblock {\em Quart. Appl. Math.}, 76(3):515--530, 2018.

\bibitem[CSW97]{CSW}
Michel~M. Chipot, Itai Shafrir, and Gershon Wolansky.
\newblock On the solutions of {L}iouville systems.
\newblock {\em J. Differential Equations}, 140(1):59--105, 1997.

\bibitem[DG93]{DeGiorgi}
Ennio De~Giorgi.
\newblock New problems on minimizing movements.
\newblock In {\em Boundary value problems for partial differential equations
  and applications}, volume~29 of {\em RMA Res. Notes Appl. Math.}, pages
  81--98. Masson, Paris, 1993.

\bibitem[DLVP15]{Poussin}
C.~De~La Vall\'{e}e~Poussin.
\newblock Sur l'int\'{e}grale de {L}ebesgue.
\newblock {\em Trans. Amer. Math. Soc.}, 16(4):435--501, 1915.

\bibitem[EASV09]{two1}
Elio~Eduardo Espejo~Arenas, Angela Stevens, and Juan J.~L. Vel\'{a}zquez.
\newblock Simultaneous finite time blow-up in a two-species model for
  chemotaxis.
\newblock {\em Analysis (Munich)}, 29(3):317--338, 2009.

\bibitem[EASV10]{two2}
Elio~Eduardo Espejo~Arenas, Angela Stevens, and Juan J.~L. Vel\'{a}zquez.
\newblock A note on non-simultaneous blow-up for a drift-diffusion model.
\newblock {\em Differential Integral Equations}, 23(5-6):451--462, 2010.

\bibitem[EVC13]{two5}
Elio Espejo, Karina Vilches, and Carlos Conca.
\newblock Sharp condition for blow-up and global existence in a two species
  chemotactic {K}eller-{S}egel system in {$\mathbb{R}^2$}.
\newblock {\em European J. Appl. Math.}, 24(2):297--313, 2013.

\bibitem[FM16]{FM}
Giani Ega\~{n}a Fern\'{a}ndez and St\'{e}phane Mischler.
\newblock Uniqueness and long time asymptotic for the {K}eller-{S}egel
  equation: the parabolic-elliptic case.
\newblock {\em Arch. Ration. Mech. Anal.}, 220(3):1159--1194, 2016.

\bibitem[GTW95]{dela}
G.~Gabetta, G.~Toscani, and B.~Wennberg.
\newblock Metrics for probability distributions and the trend to equilibrium
  for solutions of the {B}oltzmann equation.
\newblock {\em J. Statist. Phys.}, 81(5-6):901--934, 1995.

\bibitem[Hor03]{Horsurvey1}
Dirk Horstmann.
\newblock From 1970 until present: the {K}eller-{S}egel model in chemotaxis and
  its consequences. {I}.
\newblock {\em Jahresber. Deutsch. Math.-Verein.}, 105(3):103--165, 2003.

\bibitem[Hor11]{H}
Dirk Horstmann.
\newblock Generalizing the {K}eller-{S}egel model: {L}yapunov functionals,
  steady state analysis, and blow-up results for multi-species chemotaxis
  models in the presence of attraction and repulsion between competitive
  interacting species.
\newblock {\em J. Nonlinear Sci.}, 21(2):231--270, 2011.

\bibitem[HT19]{HeTadmor}
Siming He and Eitan Tadmor.
\newblock Multi-species patlak-keller-segel system.
\newblock {\em arXiv:1903.02673}, 2019.

\bibitem[JKO98]{JKO}
Richard Jordan, David Kinderlehrer, and Felix Otto.
\newblock The variational formulation of the {F}okker-{P}lanck equation.
\newblock {\em SIAM J. Math. Anal.}, 29(1):1--17, 1998.

\bibitem[KS70]{KS}
Evelyn~F. Keller and Lee~A. Segel.
\newblock Initiation of slime mold aggregation viewed as an instability.
\newblock {\em J. Theoret. Biol}, 26:399--415, 1970.

\bibitem[KW18]{KW}
Debabrata Karmakar and Gershon Wolansky.
\newblock On {L}iouville's systems corresponding to self similar solutions of
  the {K}eller-{S}egel systems of several populations.
\newblock {\em {\it To appear in Pure and Applied Functional Analysis,
  arXiv:1802.08975}}, 2018.

\bibitem[KW19]{KW19}
Debabrata Karmakar and Gershon Wolansky.
\newblock On {P}atlak-{K}eller-{S}egel system for several populations: a
  gradient flow approach.
\newblock {\em J. Differential Equations}, 267(12):7483--7520, 2019.

\bibitem[Lau15]{Ldela}
Philippe Lauren\c{c}ot.
\newblock Weak compactness techniques and coagulation equations.
\newblock In {\em Evolutionary equations with applications in natural
  sciences}, volume 2126 of {\em Lecture Notes in Math.}, pages 199--253.
  Springer, Cham, 2015.

\bibitem[Le77]{Poussin2}
Chau-Ho\'{a}n Le.
\newblock Etude de la classe des op\'{e}rateurs m-accr\'{e}tifs de
  {$L^1(\Omega)$} et accrétifs dans {$L^{\infty}(\Omega)$}.
\newblock {\em Th\'{e}se de 3\'{e}me cycle, Universit\'{e} de Paris VI}, 1977.

\bibitem[LM02]{LMdela}
Philippe Lauren\c{c}ot and St\'{e}phane Mischler.
\newblock The continuous coagulation-fragmentation equations with diffusion.
\newblock {\em Arch. Ration. Mech. Anal.}, 162(1):45--99, 2002.

\bibitem[McC95]{McCann2}
Robert~J. McCann.
\newblock Existence and uniqueness of monotone measure-preserving maps.
\newblock {\em Duke Math. J.}, 80(2):309--323, 1995.

\bibitem[MMS09]{MMS}
Daniel Matthes, Robert~J. McCann, and Giuseppe Savar\'{e}.
\newblock A family of nonlinear fourth order equations of gradient flow type.
\newblock {\em Comm. Partial Differential Equations}, 34(10-12):1352--1397,
  2009.

\bibitem[MS19]{Lestimate1}
Di~Simone Marino and Filippo Santambrogio.
\newblock Jko estimates in linear and non-linear fokker-planck equations, and
  keller-segel: {$L^p$} and sobolev bounds.
\newblock {\em arXiv:1911.10999}, 2019.

\bibitem[NS98]{NSenba}
Toshitaka Nagai and Takasi Senba.
\newblock Global existence and blow-up of radial solutions to a
  parabolic-elliptic system of chemotaxis.
\newblock {\em Adv. Math. Sci. Appl.}, 8(1):145--156, 1998.

\bibitem[Ott98]{Otto}
Felix Otto.
\newblock Dynamics of labyrinthine pattern formation in magnetic fluids: a
  mean-field theory.
\newblock {\em Arch. Rational Mech. Anal.}, 141(1):63--103, 1998.

\bibitem[Ott01]{Otto1}
Felix Otto.
\newblock The geometry of dissipative evolution equations: the porous medium
  equation.
\newblock {\em Comm. Partial Differential Equations}, 26(1-2):101--174, 2001.

\bibitem[Pat53]{Pa}
Clifford~S. Patlak.
\newblock Random walk with persistence and external bias.
\newblock {\em Bull. Math. Biophys.}, 15:311--338, 1953.

\bibitem[San15]{San}
Filippo Santambrogio.
\newblock {\em Optimal transport for applied mathematicians}, volume~87 of {\em
  Progress in Nonlinear Differential Equations and their Applications}.
\newblock Birkh\"{a}user/Springer, Cham, 2015.
\newblock Calculus of variations, PDEs, and modeling.

\bibitem[Sim87]{Simon}
Jacques Simon.
\newblock Compact sets in the space {$L^p(0,T;B)$}.
\newblock {\em Ann. Mat. Pura Appl. (4)}, 146:65--96, 1987.

\bibitem[SS02]{SeSu}
Takasi Senba and Takashi Suzuki.
\newblock Weak solutions to a parabolic-elliptic system of chemotaxis.
\newblock {\em J. Funct. Anal.}, 191(1):17--51, 2002.

\bibitem[SS04]{SSbook}
Takasi Senba and Takashi Suzuki.
\newblock {\em Applied analysis}.
\newblock Imperial College Press, London, 2004.
\newblock Mathematical methods in natural science.

\bibitem[Suz05]{Subook}
Takashi Suzuki.
\newblock {\em Free energy and self-interacting particles}, volume~62 of {\em
  Progress in Nonlinear Differential Equations and their Applications}.
\newblock Birkh\"{a}user Boston, Inc., Boston, MA, 2005.

\bibitem[SW05]{SW}
Itai Shafrir and Gershon Wolansky.
\newblock Moser-{T}rudinger and logarithmic {HLS} inequalities for systems.
\newblock {\em J. Eur. Math. Soc. (JEMS)}, 7(4):413--448, 2005.

\bibitem[Vil03]{Villani03}
C\'{e}dric Villani.
\newblock {\em Topics in optimal transportation}, volume~58 of {\em Graduate
  Studies in Mathematics}.
\newblock American Mathematical Society, Providence, RI, 2003.

\bibitem[Wol02]{W1}
Gershon Wolansky.
\newblock Multi-components chemotactic system in the absence of conflicts.
\newblock {\em European J. Appl. Math.}, 13(6):641--661, 2002.

\end{thebibliography}

\bibliographystyle{alpha} 

\end{document}